\newtheorem{theorem}[subsection]{Theorem}
\newtheorem{proposition}[subsection]{Proposition}
\newtheorem{lemma}[subsection]{Lemma}
\newtheorem{corollary}[subsection]{Corollary}
\theoremstyle{definition}
\newtheorem{example}[subsection]{Example}
\newtheorem{conjecture}[subsection]{Conjecture}
\theoremstyle{remark}
\newtheorem{remark}[subsection]{Remark}
\numberwithin{equation}{subsection}
\newcommand{\lra}{\longrightarrow}
\newcommand{\xra}{\xrightarrow}
\newcommand{\fm}{\mathfrak{m}}
\newcommand{\fn}{\mathfrak{n}}
\newcommand{\fp}{\mathfrak{p}}
\newcommand{\fq}{\mathfrak{q}}
\newcommand{\calC}{\mathcal{C}}
\newcommand{\calH}{\mathcal{H}}
\newcommand{\CC}{\mathbb{C}}
\newcommand{\ZZ}{\mathbb{Z}}
\newcommand{\con}[2]{\operatorname{C}(#1,#2)}
\newcommand{\depth}{\operatorname{depth}}
\newcommand{\edim}{\operatorname{embdim}}
\newcommand{\End}{\operatorname{End}}
\newcommand{\Ext}{\operatorname{Ext}}
\newcommand{\height}{\operatorname{height}}
\newcommand{\Hom}{\operatorname{Hom}}
\newcommand{\im}{\operatorname{im}}
\newcommand{\Tor}{\operatorname{Tor}}
\newcommand{\rank}{\operatorname{rank}}
\newcommand{\Tr}{\operatorname{tr}}
\newcommand{\ul}{\underline}
\newcommand{\ov}{\overline}
\newcommand{\dual}[1]{#1^{\vee}}
\begin{document}

\title[Rigid ideals ]
{Rigid ideals in Gorenstein rings \\of dimension one}
\author[Huneke, Iyengar,  Wiegand]
{Craig Huneke, Srikanth B. Iyengar, and Roger Wiegand}

\address{Craig Huneke\\
Department of Mathematics \\
University of Virginia\\
Charlottsville, VA 22904, U.S.A.}
\email{huneke@virginia.edu }

\address{Srikanth B. Iyengar \\
Department of Mathematics \\
University of Utah\\
Salt Lake City, UT 84112, U.S.A.}
\email{iyengar@math.utah.edu}

\address{Roger Wiegand\\
Department of Mathematics \\
University of Nebraska \\
Lincoln, NE 68588, U.S.A.}
\email{rwiegand1@math.unl.edu}

\subjclass[2010]{13D07, 13C14, 13C99}

\keywords{complete intersection ring, Gorenstein ring,  rigid module, tensor product, torsion}

\date{\today}

\begin{abstract}
We investigate the existence of  ideals $I$ in a one-dimensional Gorenstein local ring $R$ satisfying $\mathrm{Ext}^{1}_{R}(I,I)=0$.
\end{abstract}

\maketitle{}

\section{Introduction}

In their study of torsion in tensor products in \cite{HW1}, the first and third authors of this paper
proposed the following  conjecture:

\begin{conjecture} 
\label{conj:HW} Let 
$R$ be a   Gorenstein local domain of dimension one and $M$ a finitely generated 
$R$-module such that $M\otimes_RM^*$ is torsionfree. Then $M$ is free.  
\end{conjecture}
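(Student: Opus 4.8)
The plan is to argue by contradiction: suppose $M$ is a finitely generated $R$-module that is not free and for which $M\otimes_RM^*$ is torsionfree, and derive a contradiction. First I would make the standard reductions. Passing to the completion $\widehat R$ --- again a Gorenstein local domain of dimension one, with $\widehat{M^*}\cong(\widehat M)^*$ by flatness --- and then, if convenient, to a flat local extension with infinite residue field, one may assume $R$ complete with infinite residue field. Writing $T$ for the torsion submodule of $M$ and replacing $M$ by $M/T$ --- harmless since $M^*=(M/T)^*$ and the hypothesis descends over the one-dimensional domain $R$ --- one may assume $M$ torsionfree, hence maximal Cohen--Macaulay, of depth $1=\dim R$; stripping off free direct summands, one may further assume $M$ has no nonzero free summand, and in particular $M$ is faithful. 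When $R$ happens to have the property that finitely generated torsionfree modules are direct sums of ideals, one may even take $M=I$ to be an ideal, which is the situation flagged by the paper's title.

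The key step is to convert torsionfreeness of $M\otimes_RM^*$ into a \emph{cohomological vanishing} intrinsic to $M$. For this I would use the exact sequences of Auslander built from the transpose $\operatorname{Tr}M$, which relate $\Tor_1^R(\operatorname{Tr}M,N)$ to the kernel and cokernel of the evaluation map $M\otimes_RN\to\Hom_R(M^*,N)$. Taking $N=M^*$ and counting depths --- $M$, $M^*$ and $M\otimes_RM^*$ are all maximal Cohen--Macaulay, hence of depth $1$ --- I expect to conclude that the canonical map $M^*\otimes_RM\to\End_R(M)$ is bijective and, consequently, that $M$ is \emph{rigid}: $\Ext^1_R(M,M)=0$ (strictly, $\Ext^1_R(M,M^{**})=0$, the distinction mattering only when $M$ is not reflexive). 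Thus the conjecture is reduced to showing that over a one-dimensional Gorenstein local ring a rigid maximal Cohen--Macaulay module --- equivalently, a rigid ideal --- must be free.

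For $R$ a hypersurface this last statement is known: $M$ is described by a matrix factorization of the defining equation, and by Auslander--Reiten theory a non-free indecomposable maximal Cohen--Macaulay module over a one-dimensional hypersurface carries a non-split almost split sequence representing a nonzero class in $\Ext^1_R(M,M)$, so $\Ext^1_R(M,M)=0$ forces $M$ free; one can also argue directly with the matrix factorization as in Huneke--Wiegand. For $R$ a complete intersection I would try to transplant this argument using Eisenbud operators and support varieties in place of matrix factorizations. More generally I would look for hypotheses on $R$ --- minimal multiplicity, numerical semigroup rings, the existence of a suitable surjection onto $R$ from a hypersurface --- under which rigid ideals can be shown to be principal, and hope the general case assembles from these.

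The real obstacle is the final step: passing from $\Ext^1_R(M,M)=0$ to freeness of $M$. Over an arbitrary one-dimensional Gorenstein domain there is neither a matrix-factorization model for $M$ nor an ambient regular ring from which to descend, so rigidity has to be exploited intrinsically --- through the almost split sequence, the Auslander $\delta$-invariant, or computations in the stable module category --- and this is precisely where the known techniques give out, which is why the assertion remains a conjecture. A subtler hazard appears already in the reduction above: because a maximal Cohen--Macaulay module over a one-dimensional Gorenstein domain need not be reflexive, passing to $M^*$ can lose information, so the identification of the torsion submodule of $M\otimes_RM^*$ with a self-Ext of $M$ must be carried out carefully, keeping track of which of the maps in Auslander's sequences are injective.
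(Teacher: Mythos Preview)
The statement you are asked to prove is a \emph{conjecture}; the paper does not prove it, and indeed it remains open. There is therefore no proof in the paper to compare against. What the paper does establish is precisely the reduction you carry out in your first two paragraphs: it reduces to the case where $M$ is torsionfree (their \S2.5, matching your passage to $M/T$), and then shows (Proposition~5.1) that for a torsionfree module of rank over a one-dimensional Gorenstein local ring, $M\otimes_RM^*$ torsionfree is equivalent to $\Ext^1_R(M,M)=0$. Your outline of this reduction via Auslander's transpose sequences is essentially the same argument. From that point the paper, like you, treats the rigidity statement as the open problem and proves it only under additional hypotheses (small multiplicity, minimal multiplicity, smoothable ideals in complete intersections, etc.), so your honest admission that ``the known techniques give out'' is exactly the state of affairs.

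Two technical slips in your reductions are worth flagging. First, the completion of a one-dimensional local domain need not be a domain (an analytically reducible curve singularity already fails this), so ``$\widehat R$ --- again a Gorenstein local domain'' is false in general; the paper sidesteps this by working with modules having rank rather than insisting $R$ be a domain. Second, your closing worry that ``a maximal Cohen--Macaulay module over a one-dimensional Gorenstein domain need not be reflexive'' is unfounded: over any Gorenstein local ring every MCM module is reflexive, so once $M$ is torsionfree (hence MCM in dimension one) you have $M\cong M^{**}$ and the distinction between $\Ext^1_R(M,M)$ and $\Ext^1_R(M,M^{**})$ evaporates.
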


Here and throughout this paper we set $M^* = \Hom_R(M,R)$. An $R$-module is \emph{torsionfree} if no regular element of $R$ kills a nonzero element of the module. 

An $R$-module  $M$  is \emph {rigid} provided $\Ext^1_R(M,M) = 0$, that is to say, every self-extension of $M$ splits.  Over a one-dimensional Gorenstein local ring, a finitely generated  torsionfree $R$-module $M$ is rigid if and only if $M\otimes_RM^*$ is torsionfree; see Proposition \ref{prp:tf=rigid}.  Conjecture \ref{conj:HW}  reduces to the case where  $M$ is torsionfree (see \ref{ch:red-to-tf}), and hence is equivalent to the following:

\begin{conjecture} 
\label{conj:HWrigid} Over a one-dimensional local Gorenstein domain, every finitely generated 
rigid torsionfree $R$-module is free.  
\end{conjecture}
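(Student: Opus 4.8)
The plan is to recast the statement, make such reductions as are legitimate, and then isolate the genuine difficulty. Since $R$ is a one-dimensional Gorenstein local domain, a finitely generated torsionfree module $M$ has $\depth M\geq 1=\dim R$, so $M$ is maximal Cohen--Macaulay, and being over a Gorenstein ring it is then totally reflexive (Auslander--Bridger): $\Ext^i_R(M,R)=0=\Ext^i_R(M^*,R)$ for $i>0$, and $M\to M^{**}$ is an isomorphism. Thus the conjecture asserts that \emph{a rigid totally reflexive module over a one-dimensional Gorenstein local domain is free}. One may pass to the Henselization $R^h$, still a one-dimensional Gorenstein local \emph{domain}; assuming $R$ excellent or Nagata, its normalization $\widetilde R$ is module-finite and is a local principal ideal domain. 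What one must \emph{not} do is complete: $\widehat R$ need not be a domain, and by the example below such a ring typically carries non-principal rigid ideals even when $R$ does not---so the conjecture does not reduce to the complete case, a first indication that the domain hypothesis is doing real work.

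If $R$ is a complete intersection, then completing is harmless for the conclusion ``$M$ free''---the rank of $M$ survives in $\widehat M$---so here one may write $\widehat R=T/\mathfrak{a}$ with $(T,\fn)$ regular local and $\height\mathfrak{a}=\dim T-1$. When $\widehat R=T/(f)$ is a hypersurface, totally reflexive modules come from matrix factorizations of $f$, minimal free resolutions over $\widehat R$ are $2$-periodic, and one has rigidity of $\Tor$ for modules of constant rank; then $\Ext^1_R(M,M)=0$ forcing $M$ free is the theorem of Huneke and Wiegand. A warning is visible already: ``$\Ext^1=0$'' does \emph{not} propagate upward---over a hypersurface $\Ext^2_R(M,M)\neq 0$ for every non-free maximal Cohen--Macaulay $M$, since by $2$-periodicity it is the nonzero module of stable self-maps---so the quantity to manipulate is $\Tor_1^R(M,M^*)$, used together with the reflexivity $M\cong M^{**}$. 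For codimension $c\geq 2$ one would descend along $T\twoheadrightarrow T/(f_1)\twoheadrightarrow\cdots\twoheadrightarrow\widehat R$ and use the cohomology operators of Gulliksen and the support varieties of Avramov--Buchweitz, which yield $M$ free once $\Ext^i_R(M,M)=0$ for $i\gg 0$; but bridging from $\Ext^1=0$ to that vanishing is exactly what is unclear, and the rigidity of $\Tor$ behind the hypersurface case fails for $c\geq 2$, even for modules with rank. So this case already forces one to exploit the \emph{specific} test module $M^*$, and not an arbitrary one.

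The deepest obstacle is that the domain hypothesis is indispensable yet invisible to these tools: if $R$ is only reduced---say $R=k[[x,y]]/(xy)$---then the ideal $(x)$ is a rigid (by Proposition \ref{prp:tf=rigid}) torsionfree $R$-module that is not free, so a correct argument must use that $\Spec R$ is \emph{irreducible}, whereas completion, change of rings, matrix factorizations and support varieties are all blind to the difference between ``reduced'' and ``domain.'' I therefore expect the decisive input to be non-homological: work over $R^h$, transport $M$ to $\widetilde R$ (a principal ideal domain, over which finitely generated torsionfree modules are free), where $M$ becomes free, and then descend freeness by controlling the conductor and the trace ideal $\im(M\otimes_R M^*\to R)$, which equals $R$ precisely when $M$ is projective. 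For an ideal $M=I$---the case this paper concentrates on---the relevant tower is $R\subseteq\End_R(I)\subseteq\widetilde R$, and the goal is to show that $\Ext^1_R(I,I)=0$ forces $I$ invertible, hence (as $R$ is local) principal; the mechanism would be the long exact sequences obtained by applying $\Hom_R(-,I)$ and $\Hom_R(-,R)$ to $0\to I\to R\to R/I\to 0$. Realistically I expect to push this through only under extra hypotheses---small rank, minimal multiplicity, or strong control on $\widetilde R$ or the conductor; the statement is still a conjecture precisely because no present technique manufactures the irreducibility ingredient in general.
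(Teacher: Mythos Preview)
The statement you are asked to prove is labelled a \emph{Conjecture} in the paper, and the paper does not prove it. There is no ``paper's own proof'' to compare against: the entire article is devoted to partial results under extra hypotheses (small multiplicity, minimal multiplicity, smoothability, low embedding dimension), all for \emph{ideals} rather than general modules. You recognize this yourself in your final sentence, so in that sense your write-up is honest---it is a survey of obstructions and plausible ingredients, not a proof, and it should not be presented as one.

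Two comments on content. First, a technical slip: the Henselization of a one-dimensional local domain need not be a domain; that holds only when $R$ is unibranch, i.e.\ its normalization is local. The same caveat applies to your claim that $\widetilde R$ ``is a local principal ideal domain'': the normalization of a one-dimensional local domain is semilocal in general. These are exactly the situations where the branches of $\Spec\widehat R$ proliferate, so your proposed passage to $R^h$ does not automatically sidestep the ``domain versus reduced'' difficulty you correctly flag.

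Second, on strategy versus the paper: the ingredients you single out---the trace ideal $\Tr_R(M)$, the tower $R\subseteq\End_R(I)\subseteq\widetilde R$, and the long exact sequences from $0\to I\to R\to R/I\to 0$---are precisely the ones the paper exploits (see Lemma~\ref{lem:end-trace}, Remark~\ref{rem:ubiquity}, Proposition~\ref{prop:Ext-length}). What the paper adds, and what your outline lacks, is a quantitative mechanism: Proposition~\ref{prp:multiplicity-bound} bounds $\nu_R(J)$ for $J\supseteq\Tr_R(I)$ in terms of $e(R)$, and Proposition~\ref{prop:Ext-length} converts rigidity into the length identity $\lambda(\Tr_R I/I)=\lambda\big((I/I^2)\otimes_{R/I}\omega_{R/I}\big)$, which is then attacked via linkage and deformation in the complete-intersection case. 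Your sketch points at the right objects but supplies no inequality or length count that would force $\Tr_R(I)=R$ or $\End_R(I)=R$; absent such a count, the argument does not close even in the special cases the paper handles.
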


Conjecture \ref{conj:HW} is proved for abstract hypersurfaces in \cite{HW1}.  More generally in that paper it is shown that if $R$ is an abstract hypersurface of dimension one, and $M$ and $N$ are finitely generated $R$-modules, at least one of which has  rank, and if $M\otimes_RN$ is  torsionfree, then either $M$ or $N$ is free.  Celikbas \cite[Proposition 4.17]{Cel} has shown that Conjecture \ref{conj:HW}
holds if $R$ is a complete intersection and the module $M$ is torsion-free and has bounded Betti numbers.  

Both conjectures fail without the hypothesis that $R$ be one-dimensional; see Examples \ref{eg:dim3} and \ref{eg:double-sharp}.  A plausible way to extend Conjecture \ref{conj:HW} to higher dimensions is to replace ``torsionfree'' with ``reflexive'' (see Conjecture \ref{conj:HWdim>1}), but in fact that version of the conjecture reduces to the one-dimensional case; see \ref{ch:red-dim1}. 

In this paper we concentrate chiefly on the case of ideals, though some of our results are true more generally.  We usually insist that the ideal contain a regular element, since otherwise there are easy counterexamples; see Example \ref{eg:2lines}.

Given a local ring $R$, we write $\edim R$ for embedding dimension, and $e(R)$ for the multiplicity, of $R$.  The minimal number of generators required for an $R$-module $M$ is denoted $\nu_{R}M$. Our main results  are summarized below:

\begin{theorem}
\label{thm:main}
Let $R$ be a local ring with $\dim R=1$. Let $I$ be a reflexive ideal containing a regular element, and
assume that $I\otimes_{R}I^{*}$ is torsionfree. The ideal $I$ is then  principal under each of the following conditions:
\begin{enumerate}[\quad\rm(1)]
\item
$\nu_RI\le 2$ and $\nu_RI^*\le 2$;
\item
$e(R)\le 8$;
\item
$R$ is Gorenstein of minimal multiplicity;
\item
$R$ is a complete intersection and $e(R)\le 10$;
\item
$R$ is a complete intersection and the pair $(R,I)$ can be smoothed; 
\item
$R$ is a complete intersection with $\edim R\le 3$ and $R/I$ is Gorenstein.
\end{enumerate}
\end{theorem}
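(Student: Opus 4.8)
The plan is to funnel all six cases through a single mechanism and then dispatch the residual configurations either by hand or by structure theory. By Proposition~\ref{prp:tf=rigid} the hypothesis is the same as $\Ext^{1}_{R}(I,I)=0$, so we work with rigidity throughout. Realize $I^{*}$ as a fractional ideal; then $R\subseteq I^{*}$, the trace ideal $\tau:=I I^{*}$ satisfies $I\subseteq\tau\subseteq R$, and $I^{*}$ is again a reflexive ideal containing a regular element with $(I^{*})^{*}=I$, so the data is symmetric in $I$ and $I^{*}$ and we may assume $\nu_{R}I\le\nu_{R}I^{*}$. Because $I$ contains a regular element it is locally free of rank one at every minimal prime, so the evaluation map $I\otimes_{R}I^{*}\to R$, whose image is $\tau$, is an isomorphism after localizing at each minimal prime; its kernel is therefore torsion, and rigidity — that is, torsionfreeness of $I\otimes_{R}I^{*}$ — forces that kernel to vanish. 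Thus $I\otimes_{R}I^{*}\cong\tau$. If $\tau=R$ then $I$ is invertible, hence principal since $R$ is local, and we are done; so from now on assume $\tau\subsetneq R$ and aim for a contradiction.

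From $I\otimes_{R}I^{*}\cong\tau$ and $\nu_{R}(I\otimes_{R}I^{*})=\nu_{R}I\cdot\nu_{R}I^{*}$ we obtain the identity $\nu_{R}I\cdot\nu_{R}I^{*}=\nu_{R}\tau$, and moreover the relations of $\tau$ are tightly controlled by those of $I$ and of $I^{*}$. Since $\tau$ contains a regular element and $\dim R=1$, the ring $R$ is Cohen--Macaulay and $\tau$ has finite colength, so after the harmless extension to an infinite residue field we may choose a regular minimal reduction $x$ of $\fm$ and, using that $\lambda(\tau/x\tau)=\lambda(R/xR)$ because $x$ is regular, conclude $\nu_{R}\tau\le\lambda(R/xR)=e(R)$. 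Hence
\[
\nu_{R}I\cdot\nu_{R}I^{*}\le e(R),
\]
which already confines $\nu_{R}I$ to a short list in cases (2) and (4): $\nu_{R}I\le 2$ when $e(R)\le 8$, and $\nu_{R}I\le 3$ when $e(R)\le 10$. The few remaining small configurations are eliminated by a closer analysis of $I\otimes_{R}I^{*}\cong\tau$: when $\nu_{R}I=\nu_{R}I^{*}=2$ the four products of minimal generators must be a minimal generating set of the ideal $\tau$, and comparing the relations of $I\otimes_{R}I^{*}$ inherited from those of $I$ and $I^{*}$ with the intrinsic relations of $\tau$ inside $R$ produces a contradiction, proving (1); in the complete-intersection cases the surviving possibilities for $(\nu_{R}I,\nu_{R}I^{*})$ impose a Betti sequence on $I$ that a complexity/support-variety count over a complete intersection of multiplicity at most $10$ shows to be incompatible with rigidity unless $I$ is free, which finishes (4) and the rest of (2).

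Cases (3), (5) and (6) trade numerics for extra structure. For (3), a Gorenstein ring of minimal multiplicity has $\fm^{2}=x\fm$ and every maximal Cohen--Macaulay module over it is Ulrich; for an Ulrich ideal one computes $\Ext^{1}_{R}(I,I)$ directly from a minimal presentation and finds it nonzero unless $I$ is free. For (5), a smoothing of $(R,I)$ is a flat one-parameter family whose generic fibre is regular, on which the companion ideal is automatically principal; a suitable deformation-invariant quantity measuring the failure of principality of $I$ (morally the colength of $I$ in $\tau$, or the length of the torsion of $I\otimes_{R}I^{*}$ in the degenerating family) is constant along the family by flatness, and its vanishing at the generic fibre forces it to vanish on the special fibre. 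For (6), $R/I$ Gorenstein of grade one means $I=(x):J$ with $J=(x):I$ and $J/(x)\cong\omega_{R/I}\cong R/I$, so $\nu_{R}J\le 2$; since $R$ is a complete intersection with $\edim R\le 3$ it is a hypersurface (already covered by \cite{HW1}) or a codimension-two complete intersection, and combining the linkage description with the resolution constraint of the first paragraph and the classification of grade-one Gorenstein ideals in such rings forces $I$ to be principal.

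The crux throughout is the passage from the identity $\nu_{R}I\cdot\nu_{R}I^{*}=\nu_{R}\tau$ to a genuine obstruction: the relation-comparison is decisive only when both generator numbers equal $2$, and beyond that one must play the full tensor-product resolution of $\tau$ against the relations forced by $\tau\subseteq R$. It is precisely this gap that makes cases (2)--(6) require a multiplicity cap, a complete-intersection complexity input, or a structural hypothesis, and that leaves Conjecture~\ref{conj:HWrigid} open in general.
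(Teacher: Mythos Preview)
Your setup is sound and matches the paper: the isomorphism $I\otimes_R I^{*}\cong\tau=\Tr_R(I)$ (Lemma~\ref{lem:nu-trace}) and the resulting bound $\nu_R I\cdot\nu_R I^{*}\le e(R)$ are exactly the starting point. But from there almost every case is either hand-waved or wrong.

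The sharpest gap is in (2). From $\nu_R I\cdot\nu_R I^{*}\le 8$ with $\nu_R I\le\nu_R I^{*}$ you correctly get $\nu_R I\le 2$, but the pairs $(2,3)$ and $(2,4)$ survive, and you dispatch them by ``a complexity/support-variety count over a complete intersection.'' Case (2) does \emph{not} assume $R$ is a complete intersection, so this tool is unavailable. The paper instead proves a sharper inequality (Proposition~\ref{prp:multiplicity-bound}): applying $\Hom_R(-,J)$ with $J\supseteq\tau$ to a presentation of $I$ yields $\xi(\nu+1)\le e(R)$, which kills $(2,3)$ and $(2,4)$ immediately. Your argument for (1) is also only a gesture; the paper's proof (Proposition~\ref{prp:2gen-refl}) constructs an explicit periodic resolution and exhibits a nonzero class in $\Tor_1$.

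For (3) your claim that every MCM module over a Gorenstein ring of minimal multiplicity is Ulrich is false: that holds for rings with $e=\edim-\dim+1$, whereas here ``Gorenstein minimal multiplicity'' means $e=\edim R+2$, a genuinely different class. For (5) you have misread the definition: in this paper $(R,I)$ \emph{smoothable} means it deforms to $(Q,J)$ with $J$ generically a complete intersection (\S\ref{def:smoothable}), not that there is a flat family with regular generic fibre; your semicontinuity sketch addresses the wrong object. For (4) and (6) the paper's route is entirely different from yours and is not vague: it goes through the twisted-conormal inequality $\lambda(\con RI)\ge(\dim R)\lambda(R/I)$ (Proposition~\ref{prop:Ext-length}, Corollary~\ref{cor:rigid-length}), proves this inequality is inherited under deformation and preserved by linkage (Proposition~\ref{prop:conj-deforms}), and then shows the relevant preimage ideal in a regular ring is licci, using Watanabe's theorem on codimension-three Gorenstein ideals. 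No complexity or support-variety argument appears, and your ``Betti sequence'' and ``classification'' sentences do not constitute a proof.
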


Parts (1) and (2) are Proposition \ref{prp:2gen-refl} and Theorem \ref{thm:HW-general} respectively. Part (3) is proved as Theorem~\ref{thm:HW-gorenstein};  part (4) is contained in Theorem~\ref{thm:ci-smoothable}.  Parts (5) and (6) are found in  Theorem \ref{thm:smoothable-rigid} and Corollary \ref{cor:embdim-three}.

The paper is organized as follows. Section \ref{sec:examples} contains some examples justifying the hypotheses in the conjectures, and demonstrates that the pivotal case in these considerations occurs in dimension one.  In Section \ref{sec:small-mult} we prove a number of estimates on the number of generators of rigid ideals or modules for general one-dimensional rings, in terms of the maximum number of generators of any ideal containing the trace ideal of the module studied. These are used throughout the remainder of the paper, and in particular in Section \ref{sec:small-mult} to prove Theorem~\ref{thm:main}(2). These results extend and were inspired by work of S. Goto, R. Takahashi, N. Taniguchi, and H. Le Truong~\cite[Theorem 1.4]{GTTT} which covers the case $e(R) \le 6$.

In Section \ref{sec:gorenstein} we apply the same estimates to ideals in one-dimensional Gorenstein rings. We focus on the relationship between  Gorenstein ideals and two-generated ideals through linkage. Our main result in this section is about Gorenstein rings of minimal multiplicity.

The key result of Section \ref{sec:rigidity} is a necessary and sufficient criterion for an ideal $I$ in a one-dimensional Gorenstein ring to be rigid, in terms of the lengths of certain associated modules, especially the twisted conormal module, $I\otimes_R\omega_{R/I}$.  This result is used in Section \ref{sec:smoothable} to prove that rigid smoothable ideals are principal if the ambient ring is a complete intersection. This criterion leads to our results on rigidity for complete intersections of multiplicity at most $10$. 

\subsubsection*{Acknowledgements}
This article is based on work supported by the National Science Foundation under Grant No. 0932078000, while the authors were in residence at the  Mathematical Sciences Research Institute in Berkeley, California,  during the Fall semester of 2012. CH was partially supported by NSF grant DMS-1460638; SBI partly supported by NSF grant DMS-1700985;  RW partly supported by Simons Collaboration Grants 209213 and 426885. We thank Olgur Celikbas for helpful comments during the preparation of this work.

\section{Examples and reductions}
\label{sec:examples}
Throughout the paper, $(R,\fm,k)$ is a local ring and $Q(R)$ denotes the ring of fractions of $R$. We deal only with finitely generated modules.  An $R$-module $M$ has \emph{rank} if $M_\fp$ is free of constant rank $r$ for all associated primes $\fp$ of $R$; in this case, we write $\rank_{R}(M) = r$.  We say $M$ is Cohen-Macaulay (abbreviated to CM) if  it satisfies $\depth_{R}M \ge \dim M$, and \emph{maximal Cohen-Macaulay} (abbreviated to MCM) if $\depth_{R}M\ge \dim R$.  In both cases, equality holds if $M$ is nonzero.

In this section we present some examples and observations 
that justify the assumptions on the rings and modules in
the conjectures stated in the introduction.  The first one shows why the conjectures would fail without the assumption that $R$ is a domain.  Actually, we typically get by with the weaker assumption that the 
module in question has rank, a condition that fails in this example.

\begin{example}
\label{eg:2lines} 
Let $R = \CC[[x,y]]/(xy)$.  Put $M = R/(x) = (y)$.  Then $M \cong M^*$ and $M$ is not free, but $M\otimes_RM^*\cong M$, which is torsionfree.
\end{example}

\begin{example}
\label{eg:double-sharp} 
With $R = \CC[[x,y]]/(xy)$ and $M = R/(x)$ as in Example \ref{eg:2lines}, put  $S = \CC[[x,y,u,v]]/(xy-u^2-v^2) \cong \CC[[x,y,u,v]]/(xy-uv)$.  The stable category of MCM $S$-modules is equivalent to that of MCM  $R$-modules, by Kn\"orrer periodicity \cite{Kn}.  One can check directly that $M$ is a rigid $R$-module.  Thus $S$, a three-dimensional Gorenstein domain, has a rigid MCM module that is not free.
\end{example}

\begin{remark}
\label{rem:rigid-canon}
The Gorenstein hypothesis in Conjecture \ref{conj:HWrigid} is essential.  In fact, if $R$ is any CM ring with canonical module $\omega_R$, then $\omega_R$ is rigid, and it is free only if $R$ is Gorenstein.
\end{remark}

\begin{subsection}{Reduction to the torsionfree case}
\label{ch:red-to-tf}  
A standard argument going back at least to Auslander~\cite{Au} shows that we may as well assume that the module $M$ is torsionfree.  In detail, let $\boldsymbol\top M$ denote the torsion submodule of $M$  (the kernel of $M\mapsto Q(R) \otimes_RM$) and $\boldsymbol\bot M = M/\boldsymbol\top M$.  If $N$ is another module and $M\otimes_RN$ is torsionfree, then the map  $M\otimes_R N \to (\boldsymbol\bot M)\otimes_R(\boldsymbol\bot N)$ is an isomorphism; see \cite[Lemma 1.2]{CW}.  In particular, $ (\boldsymbol\bot M)\otimes_R(\boldsymbol\bot N)$ is torsionfree as well.  If, now, $\boldsymbol\bot M$ is free, then $M \cong \boldsymbol\bot M \oplus \boldsymbol\top M$, and $M\otimes_RN$ torsionfree implies that $\boldsymbol\top M = 0$.  Of course $M^*$, being a syzygy, is always torsionfree.  
\end{subsection}

\begin{example}
\label{eg:dim3}  
In dimensions greater than one, Conjecture \ref{conj:HW}  can fail without stronger depth conditions on $M\otimes_RM^*$:  For the ring $S=\CC[[x,y,u,v]]/(xy-uv)$ and the ideal $I = (x,u)$, the tensor product $I\otimes_RI^*$ is isomorphic to the maximal ideal $\fm$ and hence is torsionfree (but not reflexive).  See \cite[Example 4.1]{HW1}. 
\end{example}

The last example suggests higher-dimensional version of Conjecture \ref{conj:HW}:

\begin{conjecture}
\label{conj:HWdim>1} 
If $R$ is CM, $\dim R \ge 1$ and $M\otimes_RM^*$ is reflexive, then $M$ is free.  
\end{conjecture}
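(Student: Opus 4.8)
The plan is not to prove this outright --- it is, after all, a conjecture --- but to reduce it to its one-dimensional case, which is the content of \ref{ch:red-dim1}; modulo the passage between torsionfree and reflexive modules recorded in \ref{ch:red-to-tf}, that one-dimensional case amounts to Conjecture \ref{conj:HW}, since over a one-dimensional Cohen--Macaulay ring a reflexive module is in particular torsionfree. So I would argue by induction on $d=\dim R$, the case $d=1$ being granted.

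For $d\ge 2$, first use \ref{ch:red-to-tf} to assume $M$ is torsionfree, so that $M$ embeds into a free module (and $M^{*}$, being a syzygy, is automatically torsionfree). The first step is to prove that $M$ is free on the punctured spectrum of $R$. If $\fp\ne\fm$, then $R_{\fp}$ is Cohen--Macaulay of dimension $<d$; since forming duals and tensor products commutes with localization for finitely generated modules, $(M\otimes_R M^{*})_{\fp}\cong M_{\fp}\otimes_{R_{\fp}}(M_{\fp})^{*}$ is reflexive over $R_{\fp}$, so when $\height\fp\ge 1$ the inductive hypothesis shows $M_{\fp}$ is free. The awkward case is $\height\fp=0$, where $R_{\fp}$ is artinian and the inductive hypothesis gives nothing; here one must fall back on the fact that $M\otimes_R M^{*}$, being reflexive, is torsionfree and control $M$ at the minimal primes directly (this is immediate when $R$ is reduced, and in particular when $R$ is a domain).

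The main obstacle is the last step: upgrading ``free on the punctured spectrum'' to ``free''. I would examine the canonical map $M\otimes_R M^{*}\to\End_R(M)$, which is an isomorphism at every prime at which $M$ is free and therefore has kernel and cokernel of finite length. Since $M\otimes_R M^{*}$ is reflexive it satisfies Serre's condition $(S_2)$, hence has depth at least $2$ and so contains no nonzero module of finite length; this forces the kernel to vanish, realizing $M\otimes_R M^{*}$ as a submodule of $\End_R(M)$ of finite colength. One would then hope to run a length computation --- in the spirit of the estimates of Sections \ref{sec:small-mult} and \ref{sec:rigidity} --- to conclude first that $M$ is reflexive and then that it is free. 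Extracting genuine \emph{freeness}, rather than mere reflexivity, from this picture is precisely the difficulty that in dimension one is the (conjectural) content of Conjecture \ref{conj:HW}, which is why the higher-dimensional statement is, and can only be, a reduction.
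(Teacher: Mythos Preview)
Your reduction stops exactly where the real content lies, and the ``length computation in the spirit of Sections \ref{sec:small-mult} and \ref{sec:rigidity}'' you gesture at is not the missing ingredient.  What the paper's reduction \ref{ch:red-dim1} supplies, and you do not, is the Auslander--Goldman criterion: $M$ is free if and only if the map $\theta\colon M\otimes_RM^*\to\Hom_R(M,M)$ is \emph{surjective}.  Once you have that, you do not need to ``hope'' for anything---you only need to show the cokernel of $\theta$ vanishes.  The paper does this with a single lemma: a homomorphism $U\to V$ with $U$ reflexive and $V$ torsionfree is an isomorphism if and only if it is one in codimension one.  No induction on $\dim R$ is required; one passes directly to height~$\le 1$ primes and invokes the one-dimensional case there.

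In fact your own depth argument, carried one step further, already yields what the paper's lemma gives.  You observed that $M\otimes_RM^*$ has depth $\ge 2$; since $M$ is torsionfree so is $\Hom_R(M,M)$, hence it has depth $\ge 1$.  The depth lemma applied to $0\to M\otimes_RM^*\to\Hom_R(M,M)\to C\to 0$ then forces $\depth C\ge 1$, and since $C$ has finite length this means $C=0$.  Now $\theta$ is surjective and Auslander--Goldman finishes.  Your worry about height-$0$ primes is also misplaced: every minimal prime sits inside a height-one prime $\fq$, and once $M_\fq$ is free so is any further localization---so the ``awkward case'' is subsumed by the one-dimensional case and does not need to be handled separately, reduced ring or not.
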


\begin{subsection}{Reduction to Dimension One}
\label{ch:red-dim1}
Several authors have pointed out that this conjecture reduces to dimension one;  see \cite[Propostion 8.6]{CW}.   One observes that $M$ is projective (hence free) if and only if the map $\theta: M\otimes_RM^* \to \Hom_R(M,M)$ taking $x\otimes f$ to the endomorphism $y\mapsto f(y) x$  is surjective; see \cite[Proposition A.1]{AuGo}.  (Surjectivity of $\theta$ provides a dual basis.)  One can assume, by \ref{ch:red-to-tf}, that $M$ is torsionfree, and hence $\Hom_R(M,M)$ is torsionfree as well; now use the fact that a homomorphism $\varphi:U\to V$, with $U$ reflexive and $V$ torsionfree, is an isomorphism if and only if it is an isomorphism in codimension one; see \cite[Lemma 5.11]{LW}. 
\end{subsection}

\section{Local rings of small multiplicity}
\label{sec:small-mult}

We begin with a result, Proposition \ref{prp:2gen-refl}, proved in \cite{Herzing} and attributed to the first and third authors of this paper. We will sketch the proof here, since we have replaced the hypothesis, in \cite{Herzing}, that $R$ is Gorenstein with the weaker one that $I$  is reflexive.  The proof is the same.

\begin{proposition}
\label{prp:2gen-refl}
Let $R$ be a local ring with $\dim R=1$ and $I$ an ideal containing a regular element. If both  $I$ and $I^{*}$ are two-generated, then the $R$-module $I\otimes_{R}I^{*}$ has nonzero torsion.
\end{proposition}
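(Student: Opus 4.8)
The plan is to argue by contradiction: suppose $I$ and $I^*$ are both two-generated and $I\otimes_R I^*$ is torsionfree, and derive that $I$ is principal, contradicting the assumption that $I\otimes_RI^*$ has nonzero torsion is what we want (equivalently: show directly that a two-generated reflexive $I$ with $I^*$ two-generated and $I\otimes_RI^*$ torsionfree forces $I$ principal, and a principal $I$ trivially has $I\otimes_RI^*\cong R$ torsionfree, so the only obstruction is ruling out the genuinely two-generated case). After localizing at the total quotient ring we may assume $I$ has a rank, and since $I$ contains a regular element that rank is $1$; likewise $\rank I^*=1$. So set $n=\nu_RI$ and $m=\nu_RI^*$, both equal to $2$ by hypothesis.

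First I would write down a minimal presentation $R^2\xrightarrow{\;\alpha\;} R^2\to I\to 0$. Because $I$ has rank $1$, the matrix $\alpha$ has rank $1$ generically, so after passing to $Q(R)$ the kernel has rank $1$; in fact the first syzygy $\operatorname{Syz}_1 I$ is a rank-one torsionfree module and, reading off the presentation, $\operatorname{Syz}_1 I$ is isomorphic to an ideal $J$ with $\nu_R J\le 2$. The key computation is to identify $I^*$ with a syzygy as well: dualizing $R^2\xrightarrow{\alpha}R^2\to I\to 0$ gives $0\to I^*\to R^2\xrightarrow{\alpha^{\mathrm t}}R^2$, so $I^*\hookrightarrow R^2$ with cokernel of the transpose presenting the Auslander transpose $\operatorname{Tr} I$. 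Then $\nu_R I^*\le 2$ feeds back into controlling $\operatorname{Tr} I$. The engine of the proof should be the torsionfreeness of $I\otimes_RI^*$ together with the natural surjection $I\otimes_RI^*\to I\cdot I^{*}\subseteq R$ (multiplication) and the fact, for modules of rank $1$ over a one-dimensional ring, that $I\otimes_RI^*$ and $I\cdot I^*$ have the same rank; hence the kernel of multiplication is torsion, and torsionfreeness forces $I\otimes_RI^* \xrightarrow{\sim} I\cdot I^*$. Now $I\cdot I^*$ is an ideal containing $\operatorname{im}(\theta)$-type elements; in fact the trace ideal $\tau(I)=I\cdot I^*\subseteq R$, and one knows $\tau(I)=R$ exactly when $I$ is projective, i.e.\ principal here. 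So the problem reduces to: if $I\otimes_RI^*$ is torsionfree and $\nu_RI=\nu_RI^*=2$, then $\tau(I)=R$.

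To close that gap I would count generators. Since $I\otimes_RI^*\cong\tau(I)$ and both $I$, $I^*$ are two-generated, $\nu_R\tau(I)\le 4$; but more is true. Using a minimal generating set $x_1,x_2$ of $I$ and $f_1,f_2$ of $I^*$, the elements $f_i(x_j)$ generate $\tau(I)$, and the relations among the $x_i\otimes f_j$ in $I\otimes_RI^*$ that become zero in $\tau(I)$ must already be zero by the isomorphism — this rigidity on the $2\times 2$ "matrix of pairings" $(f_i(x_j))$ is where I expect the real content to sit. Concretely, minimality of the presentation of $I$ says the entries of $\alpha$ lie in $\fm$, and then the pairing matrix, viewed modulo $\fm\tau(I)$, must be forced to have a unit entry; otherwise one produces a nonzero torsion element of $I\otimes_RI^*$ as an explicit combination $\sum a_{ij}\,x_i\otimes f_j$ killed by a regular element (built from the $2\times 2$ minor of $\alpha$, which is a regular element because $I$ has rank $1$). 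A unit among the $f_i(x_j)$ immediately gives $\tau(I)=R$, hence $I$ projective, hence principal, and then $\nu_RI=1\ne 2$ — contradiction.

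The main obstacle, as I see it, is the last step: manufacturing the explicit torsion element when the pairing matrix $(f_i(x_j))$ has all entries in $\fm$. One must exhibit a specific nonzero element of $I\otimes_RI^*$ lying in the kernel of multiplication, and show it is nonzero in the tensor product — this requires understanding the relations module of $I\otimes_RI^*$ well enough (e.g.\ via the presentation $R^2\otimes I^*\xrightarrow{\alpha\otimes I^*}R^2\otimes I^*\to I\otimes I^*\to 0$ and the analogous one on the $I^*$ side) to certify non-triviality. I would handle this by tensoring the presentation of $I$ with $I^*$, computing $I\otimes_RI^*$ as $(I^*)^2/\operatorname{im}(\alpha\otimes I^*)$, and identifying the torsion submodule with a subquotient that is nonzero precisely when no $f_i(x_j)$ is a unit; the Fitting-ideal / minimal-presentation bookkeeping there is routine but is the crux of the verification.
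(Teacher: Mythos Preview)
Your reduction to the trace ideal is fine, but the dichotomy you set up is vacuous. If $I$ is genuinely two-generated then $I$ is not projective, so $\tau(I)\subsetneq R$ and \emph{every} pairing $f_i(x_j)$ lies in $\fm$. Thus your case ``some $f_i(x_j)$ is a unit'' never occurs, and the entire content of the proposition sits in the other case, which you defer.

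The mechanism you propose for that case is moreover incorrect. You assert that the $2\times 2$ minor of the presentation matrix $\alpha$ is a regular element ``because $I$ has rank $1$,'' but the implication runs the other way: since $\operatorname{coker}\alpha=I$ has rank $1$, the map $\alpha\colon R^{2}\to R^{2}$ has a rank-one kernel (indeed the first syzygy of $I$ is isomorphic to $I^{-1}$), so $\alpha$ is not injective and $\det\alpha$ is a zerodivisor. Concretely, writing $I=(a,b)$ and $I^{-1}=(f,g)$ with $a,b,f,g$ units of $Q(R)$, one presentation matrix is $\begin{bmatrix} ga & gb\\ -fa & -fb\end{bmatrix}$, whose determinant is identically zero. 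So there is no regular $2\times 2$ minor available to annihilate your candidate tensor, and the ``routine Fitting-ideal bookkeeping'' you invoke does not materialize. The paper's argument is of a different nature: it assembles a periodic free resolution from this matrix and its adjugate, shows that torsionfreeness of $I\otimes_{R} I^{*}$ forces $\Tor_{1}^{R}(R/J,I^{-1})=0$ for an ideal $J\cong I$, and then exhibits an explicit nonzero class in that $\Tor$ by a nilpotence argument on the entries of the matrix reduced modulo $J$. That last step is the heart of the proof and is not bookkeeping.
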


\begin{proof}
Assume to the contrary that $I\otimes_RI^*$ is torsionfree. In the proof, we work with $I^{-1}$ rather than $I^{*}$; keep in mind that they are isomorphic. Also, we identify $I^{-1}$ with the submodule $\{\alpha\in Q(R)\mid \alpha I\subseteq R\}$.

By hypothesis one has $\nu_{R}(I)=2=\nu_R(I^{-1})$. Using prime avoidance, write $I = (a,b)$ and $I^{-1} = (f,g)$, where $a,b,f,g$ are units of $Q(R)$. Let $\varepsilon\colon I^{-1}\to R^{2}$ be the map that assigns $\alpha$ to $\begin{bmatrix} \alpha b\\ -\alpha a\end{bmatrix}$.
It is easy to check that  the sequence
\[
0 \lra I^{-1} \xra{\ \varepsilon\ }  R^{2}\xra{\begin{bmatrix}  a & b  \end{bmatrix}} I \lra 0
\]
is exact.  There is a similar exact sequence
 \[
0 \lra I \xra{\ \sigma\ } R^{2}\xra{\ \tau\ }I^{-1} \lra 0\,,
\]
where $\sigma(c) = \begin{bmatrix} gc \\ -fc \end{bmatrix}$  and  $\tau( \begin{bmatrix} x \\ y \end{bmatrix}) =  fx+gy$\,.  Splicing these  exact sequences together in two ways,
one obtains the following periodic minimal resolution for $I^{-1}$:
\begin{equation}\label{eq:per-res}
\boldsymbol F_\bullet \qquad  \cdots \lra R^2 \xra{\Psi} R^2 \xra{\Phi} R^2 \xra{\Psi} R^2 \xra{\Phi} R^2 \lra 0\,,
\end{equation}
where, with $r=ga, s = gb, t = -f a$, and $u = -fb$, one has
\[
\Phi = \begin{bmatrix} r & s \\ t & u \end{bmatrix} \qquad\text{and}\qquad
\Psi  = \begin{bmatrix} -u & s \\ t & -r \end{bmatrix}\,.
\]
Since $I$ is not invertible, the entries of these matrices are elements of the maximal ideal $\fm$.  Let $J = -fI = (t,u)$.  Since $J$ and $I$ are isomorphic $R$-modules, We see that $J\otimes_RI^{-1}$ is torsionfree.

Tensoring the exact sequence
\[
0 \lra J \lra R \lra R/J \to 0
\]
with $I^{-1}$, we get an injection $\Tor^R_1(R/J,I^{-1}) \hookrightarrow J\otimes_RI^{-1}$.  Since $\Tor^R_1(R/J,I^{-1})$ is killed by the regular element $t$ and $J\otimes_RI^{-1}$ is  torsionfree, we must have 
\begin{equation}
\label{eq:Tor1}
\operatorname{H}_1\big((R/J) \otimes_R \boldsymbol F_\bullet \big) = \Tor^R_1(R/J,I^{-1})  = 0\,.
\end{equation}

The maps in the complex $ (R/J) \otimes_R \boldsymbol F_\bullet$ are given by the
reductions 
\[
\ov \Phi = \begin{bmatrix} \ov  r & \ov  s \\ 0 & 0 \end{bmatrix} \quad\text{and}\quad
\ov \Psi  = \begin{bmatrix}  0 & \ov  s \\ 0 & -\ov  r \end{bmatrix}\,.
\]
of the matrixes $\Phi$ and $\Psi$ modulo the ideal $J$.   We will build a element $\alpha \in \ker \ov \Phi \setminus \im \ov \Psi$, to obtain the desired contradiction.

If $\ov  s = 0$, we can take $\alpha = \left[\begin{smallmatrix}0 \\ 1\end{smallmatrix}\right]$, so assume $\ov  s \ne 0$.
Since $\ov  s$ is nilpotent and $\ov  r$ is not a unit of $\ov  R = R/J$, there is a positive integer $n$ 
such that $\ov  s^n \in \ov  R\ov  r$ but $\ov  s^{n-1}\notin \ov  R\ov  r$.  
Write $\ov  s^n = x\ov  r$, where $x\in \ov  R$.  Then 
$\alpha = \left[\begin{smallmatrix} x \\ -\ov  s^{n-1} \end{smallmatrix}\right]$ belongs to $\ker\ov \Phi$.
If there were elements $y,z\in \ov  R$
with $\ov \Psi \left[\begin{smallmatrix}y \\ z\end{smallmatrix}\right] = \alpha$, one would have
$\ov  r z = \ov  s^{m-1}$, a contradiction.  Thus $\alpha \notin \im\ov \Psi$.
\end{proof}

In the context of two-generated ideals, we should mention the following result of Garcia-Sanchez and Leamer \cite{GaLe}: If a monomial ideal $I$ in a  complete intersection numerical semigroup ring $R$ is two-generated and rigid, then $I$ is principal.

\begin{remark}
\label{rem:erb-bound}
Let $R$ be a Cohen-Macaulay  local ring and $M$ a MCM $R$-module with rank. There is then an inequality
\[
\nu_{R}(M)\le e(R)\rank_{R}(M)\,,
\]
where $e(R)$ denotes the multiplicity of $R$; for a proof of this assertion see, for example, \cite[Lemma 1.6]{HW1}.
\end{remark}

\begin{subsection}{Trace}
\label{ch:end-trace} 
The \emph{trace} of a module $M$ is the image $\Tr_R(M)$ of the canonical map 
\[
M^{*}\otimes_RM \lra R
\]
that assigns an element $f\otimes x$ to $f(x)$.  
\end{subsection}

\begin{proposition}
\label{prp:multiplicity-bound}
Let $R$ be a local ring with $\dim R=1$ and $M$ a nonzero finite $R$-module with rank. If $M\otimes_{R}M^{*}$ is torsionfree, then for each ideal $J\supseteq \Tr_{R}(M)$ there are inequalities
\[
\nu_{R}(J)\le e(R)(1-\rho)+\frac{\nu_{R}(M\otimes_{R}M^{*})}{\nu_{R}(M)^{2}}\le e(R)(1 - \rho + {\rho}^{2})
\]
where $\rho=\rank_{R}(M)/\nu_{R}(M)$. In particular, if $M$ is not free, then $\nu_{R}(J)\le e(R)-1$.
\end{proposition}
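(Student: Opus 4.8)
The plan is to extract a short exact sequence from the trace map and count generators along it. First I would use the hypothesis that $M$ has rank, say $\rank_R(M)=r$ and $\nu_R(M)=n$, to set up the evaluation map $\mu\colon M^*\otimes_R M\to R$ whose image is $\Tr_R(M)$. The crucial input from rigidity is that $M\otimes_R M^*$ is torsionfree, which (combined with the fact that over a one-dimensional ring $\Tr_R(M)$ contains a regular element once $M$ has positive rank, because $M$ is a nonzero submodule of a free module after localizing at minimal primes) forces $\mu$ to be injective: the kernel is torsion, being zero in codimension one, while $M\otimes_R M^*\cong M^*\otimes_R M$ is torsionfree. Thus one gets an exact sequence
\[
0\lra M^*\otimes_R M \xra{\ \mu\ } R \lra R/\Tr_R(M)\lra 0\,.
\]
Now fix any ideal $J$ with $\Tr_R(M)\subseteq J\subsetneq R$. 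The quotient $R/\Tr_R(M)$ surjects onto $R/J$, and since $\Tr_R(M)$ contains a regular element, $R/\Tr_R(M)$ has finite length; hence so does $R/J$, and $\nu_R(J)\le \length_R(R/\Tr_R(M)) - \length_R(R/J) + \nu_R(\text{something})$ — more precisely I would argue via the additivity of length: tensoring $0\to J\to R\to R/J\to 0$ and the trace sequence, one compares $\length_R(R/J)$ with $\length_R(M^*\otimes_R M)$ and $e(R)$.

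The quantitative heart of the argument is the multiplicity estimate. Since $M$ is a nonzero MCM module with rank $r$ over a one-dimensional CM ring (the torsionfree/rigid hypotheses over dimension one make $M$ MCM), Remark \ref{rem:erb-bound} gives $\nu_R(M)\le e(R)\,r$, and more usefully there is an exact count: for a MCM module, $\length_R(M/\Tr_R(M)\cdot M)$ and the Hilbert-Samuel multiplicity combine so that $\length_R\big(R/\Tr_R(M)\big) = e(R)\cdot r - \length_R(\,\text{torsion of the canonical sequence}\,)$, the "missing" term being related to $M\otimes_R M^*$. I would make this precise by applying the multiplicity/length formula to the free resolution of $M$: writing $F_1\to F_0\to M\to 0$ with $F_0=R^n$, the cokernel $R/\Tr_R(M)$ has a presentation whose length is bounded using $e(R)$ and the ranks of $F_0,F_1$. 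The term $\nu_R(M\otimes_R M^*)$ enters because $M\otimes_R M^*\cong \mu^{-1}$-image gives $\nu_R(\Tr_R(M))\le \nu_R(M\otimes_R M^*)$, and since $M\otimes_R M^*$ is a submodule of $R$ (via $\mu$) containing a regular element, $\nu_R(M\otimes_R M^*) = \nu_R(\Tr_R(M))$ and its length is $\length_R(R/\Tr_R(M))$; one then estimates via $e(R)$.

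Putting these together yields $\nu_R(J)\le e(R)(1-\rho) + \nu_R(M\otimes_R M^*)/n^2$ with $\rho=r/n$, and the second inequality follows from $\nu_R(M\otimes_R M^*)\le \nu_R(M)\nu_R(M^*)\le n\cdot e(R)r/1$ — actually one needs $\nu_R(M^*)\le e(R)r$ as well from Remark \ref{rem:erb-bound} applied to $M^*$ (which has rank $r$ too), giving $\nu_R(M\otimes_R M^*)/n^2 \le e(R)r^2/n^2 = e(R)\rho^2$. The final clause is immediate: if $M$ is not free then $\rho<1$, so $-\rho+\rho^2 = \rho(\rho-1)<0$, hence $\nu_R(J)< e(R)$, i.e.\ $\nu_R(J)\le e(R)-1$ since these are integers. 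The main obstacle I anticipate is making the first inequality sharp — specifically, justifying that $\length_R(R/\Tr_R(M))$ decomposes exactly as $e(R)(1-\rho)\cdot n^2/n^2$ plus the generator count of $M\otimes_R M^*$; this requires carefully relating the Hilbert-Samuel multiplicity of $R/\Tr_R(M)$ to that of $R$ through the (possibly non-minimal) free presentation of $M$ and invoking that $\mu$ is injective, rather than merely bounding crudely. Handling the case where $\Tr_R(M)$ might not contain a regular element a priori — which I would dispatch at the outset using that $M$ has positive rank, so $M$ is faithful modulo torsion and $\Tr_R(M)$ is not contained in any minimal prime — is a minor but necessary preliminary.
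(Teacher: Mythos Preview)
Your proposal has a genuine gap: the route through the trace sequence and length comparisons does not produce the first inequality, and in fact cannot. Once you identify $M^{*}\otimes_{R}M$ with $\Tr_{R}(M)\subseteq R$, that module has \emph{infinite} length (it contains a regular element), so statements like ``compare $\length_{R}(R/J)$ with $\length_{R}(M^{*}\otimes_{R}M)$ and $e(R)$'' are not meaningful, and there is no evident mechanism by which $\nu_{R}(J)$ for an \emph{arbitrary} ideal $J\supseteq \Tr_{R}(M)$ would emerge from length counts on $R/\Tr_{R}(M)$. Your own hedging (``$\nu_{R}(\text{something})$'', ``the main obstacle I anticipate\dots'') accurately diagnoses that the argument is not there.

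The idea you are missing is this: the containment $J\supseteq \Tr_{R}(M)$ is used not for lengths but for the identification $\Hom_{R}(M,J)=M^{*}$, since every map $M\to R$ already lands in $\Tr_{R}(M)\subseteq J$. One then applies $\Hom_{R}(-,J)$ to a minimal presentation $0\to N\to R^{\nu}\to M\to 0$, obtaining a short exact sequence $0\to M^{*}\to J^{\nu}\to X\to 0$ with $\rank_{R}X=\nu-\rank_{R}M$. Counting generators here gives $\nu\cdot\nu_{R}(J)\le \nu_{R}(X)+\nu_{R}(M^{*})$; the first summand is bounded by $e(R)(\nu-\rank_{R}M)$ via Remark~\ref{rem:erb-bound}, and the second equals $\nu_{R}(M\otimes_{R}M^{*})/\nu$. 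Dividing by $\nu$ yields the first inequality. Your treatment of the second inequality (bounding $\nu_{R}(M\otimes_{R}M^{*})\le e(R)\rank_{R}(M)^{2}$ via Remark~\ref{rem:erb-bound} applied to the torsionfree module $M\otimes_{R}M^{*}$ of rank $r^{2}$) and of the final clause ($\rho<1\Rightarrow 1-\rho+\rho^{2}<1$) is correct.
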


\begin{proof}
We may assume $R$ is Cohen-Macaulay, else $M$ is free and hence $\Tr_{R}(M)=R$, and then it is easy to verify that the desired inequalities hold.

Set $\nu=\nu_{R}(M)$, and consider a presentation
\[
0\lra N\lra R^{\nu}\lra M\lra 0
\]
Thus $N$ is the first syzygy module of $M$. Since the ideal $J$ contains $\Tr_{R}(M)$, one has $\Hom_{R}(M,J)=M^{*}$, so applying $\Hom_{R}(-,J)$ to the exact sequence above yields exact sequences
\begin{gather}
\label{eq:split-les1}
0\lra M^{*}\lra J^{\nu} \lra X\lra 0 \\
\label{eq:split-les2}
0 \lra X\lra \Hom_{R}(N,J)\lra \Ext^{1}_{R}(M,J)\lra 0
\end{gather}
Since $M$ has rank, the $R$-module $\Ext^{1}_{R}(M,J)$ is torsion and hence \eqref{eq:split-les2} yields
\[
\rank_{R}(X)=\rank_{R}(\Hom_{R}(N,J)) = \rank_{R}(N) = \nu - \rank_{R}(M)
\]
For the second equality, we have used the fact that $J$ contains $\Tr_{R}(M)$ and hence is of rank one. This computation and Remark~\ref{rem:erb-bound} yield the second inequality below
\begin{align*}
\nu\cdot\nu_{R}(J) = \nu_{R}(J^{\nu}) 
	&\le \nu_{R}(X) + \nu_{R}(M^{*}) \\
	&\le e(R)(\nu-\rank_{R}(M)) + \frac{\nu_{R}(M\otimes_{R}M^{*})}\nu \\
	&\le e(R)(\nu-\rank_{R}(M)) + \frac{ e(R) \rank_{R}(M)^{2}}\nu
\end{align*}
The first inequality is from \eqref{eq:split-les1}. The last inequality is also by Remark~\ref{rem:erb-bound}; it applies as $M\otimes_{R}M^{*}$ is torsionfree. Dividing by $\nu$ yields the stated inequalities.

When $M$ is not free $\rank_{R}M < \nu$ so $0<\rho<1$, which implies $1-\rho+{\rho}^{2}<1$.
\end{proof}

\begin{remark}
\label{rem:analysis}
Proposition~\ref{prp:multiplicity-bound} leads one to consider the parabola  $y = 1- x + x^{2}$,  which has its vertex at the point $(1/2 , 3/4)$. In our context $x=\rank_{R}(M)/\nu_{R}(M)$ and hence, assuming $M$ is not free, one has $0 < x < 1$. In this range, the parabola  satisfies $y < 1$.
\end{remark}

\begin{corollary}
\label{cor:minimal-multiplicity}
Let $(R,\fm)$ be a local ring  with $\dim R=1$. Let $M$ a finite $R$-module with rank and $M\otimes_{R}M^{*}$ torsionfree. If there exist an element $x$ in $R$ and an integer $n$ such that $\Tr_{R}(M)\subseteq \fm^n$ and  $x\fm^n = \fm^{n+1}$,  then $M$ is free. 

In particular, if $R$ has minimal multiplicity, then $M$ is free.
\end{corollary}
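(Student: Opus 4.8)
The plan is to deduce everything from a single application of Proposition~\ref{prp:multiplicity-bound} to the ideal $J=\fm^{n}$, the key input being the identity $\nu_{R}(\fm^{n})=e(R)$. Granting this, if $M$ were not free then $\fm^{n}\supseteq\Tr_{R}(M)$, so Proposition~\ref{prp:multiplicity-bound} would force $\nu_{R}(\fm^{n})\le e(R)-1$, a contradiction; hence $M$ is free. Before proving the identity I would reduce to the case that $R$ is Cohen--Macaulay: if $\dim R=1$ and $R$ is not Cohen--Macaulay, then $\fm\in\Ass R$, and since $M$ has rank the module $M_{\fm}=M$ is free. One may also assume $M\ne0$.

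To prove $\nu_{R}(\fm^{n})=e(R)$ I would first extract from the hypothesis that $(x)$ is a reduction of $\fm$. Since $\fm^{n+1}\ne0$, the relation $x\fm^{n}=\fm^{n+1}$ gives $x\ne0$; and if $x$ lay in a minimal prime $\fp$ of $R$, then $\fm^{n+j}=x^{j}\fm^{n}\subseteq\fp$ for every $j\ge1$, forcing $\fp\supseteq\fm$, which is absurd. So $x$ is a nonzerodivisor, hence a system of parameters of the one-dimensional ring $R$, and $\fm^{n+1}=(x)\fm^{n}$ says exactly that $(x)$ is a reduction of $\fm$. Being $\fm$-primary in a Cohen--Macaulay ring, $\fm^{n}$ contains a nonzerodivisor, so it is a maximal Cohen--Macaulay $R$-module of rank one, and then
\begin{align*}
\nu_{R}(\fm^{n}) &= \length_{R}(\fm^{n}/\fm^{n+1}) = \length_{R}(\fm^{n}/x\fm^{n}) \\
 &= e((x);\fm^{n}) = e((x);R) = e(R).
\end{align*}
Here the five equalities use, in turn: the definition of $\nu_{R}$; the hypothesis $\fm^{n+1}=x\fm^{n}$; the equality $e((x);N)=\length_{R}(N/xN)$ valid for a maximal Cohen--Macaulay module $N$ and a parameter $x$ over a Cohen--Macaulay local ring; the associativity formula for multiplicities, together with $\rank_{R}\fm^{n}=1$; and the fact that the reduction $(x)$ of $\fm$ has $e((x);R)=e(\fm;R)=e(R)$.

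For the final assertion, suppose $R$ has minimal multiplicity. Replacing $R$ by $R[t]_{\fm R[t]}$ if necessary---a faithfully flat local extension that preserves $\dim$, $e(-)$, $\edim$, the Cohen--Macaulay property, and, since torsionfreeness over a one-dimensional Cohen--Macaulay ring amounts to positive depth, also the hypothesis on $M\otimes_{R}M^{*}$---we may assume the residue field is infinite. Then $\fm$ has a principal reduction with reduction number at most one, i.e.\ $\fm^{2}=x\fm$, so the situation above occurs with $n=1$ as soon as $\Tr_{R}(M)\subseteq\fm$. To arrange this, write $M\cong R^{a}\oplus M'$ with $M'$ having no free direct summand; then $M$ is free if and only if $M'$ is, $M'$ has rank, $M'\otimes_{R}M'^{*}$ is still torsionfree (being a direct summand of $M\otimes_{R}M^{*}$), and $\Tr_{R}(M')\subseteq\fm$, since a module $N$ with $\Tr_{R}(N)=R$ has a free summand. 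Replacing $M$ by $M'$ and applying the first part with $n=1$ shows $M$ is free.

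The step I expect to be the main obstacle is the identity $\nu_{R}(\fm^{n})=e(R)$---more precisely, recognizing that the bland-looking relation $x\fm^{n}=\fm^{n+1}$ is exactly the statement that $(x)$ is a minimal reduction of $\fm$ with reduction number at most $n$, after which standard reduction-theoretic facts finish the argument. The remaining points---passing to the Cohen--Macaulay case, splitting off a free summand of $M$, and the residue-field extension---are routine.
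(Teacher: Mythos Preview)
Your proof is correct and follows the same approach as the paper's: reduce to the Cohen--Macaulay case, establish $\nu_R(\fm^n)=e(R)$ from the identity $x\fm^n=\fm^{n+1}$ (recognizing $(x)$ as a reduction of $\fm$), and invoke the last clause of Proposition~\ref{prp:multiplicity-bound} with $J=\fm^n$. Your handling of the minimal-multiplicity clause---extending the residue field to secure a principal reduction and stripping free summands of $M$ to force $\Tr_R(M)\subseteq\fm$---is in fact more scrupulous than the paper's one-line appeal to $\fm^2=x\fm$.
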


\begin{proof} 
We may assume $R$ is Cohen-Macaulay, else $M$ is free. There are equalities
\[
\nu (\fm^n) =\nu (\fm^n/\fm^{n+1}) = \lambda(\fm^n/x\fm^n) = e(\fm^n) = e(R)
\]
where the last two hold as $\fm^n$ is Cohen-Macaulay of rank $1$.  The main conclusion follows from the last statement of
Proposition \ref{prp:multiplicity-bound}. The last statement follows from the fact that if $R$ has
minimal multiplicity, $\fm^2 = x\fm.$
\end{proof}

\begin{lemma}
\label{lem:nu-trace}
Let $R$ be a local ring  and $I$ an ideal containing a regular element. If the $R$-module $I\otimes_{R}I^{*}$ is torsionfree, then the canonical map $I\otimes_{R}I^{*}\to \Tr_{R}(I)$ is bijective and hence
\[
\nu_{R}(I)\nu_{R}(I^{*}) = \nu_{R}(\Tr_{R}(I))\,.
\]
When in addition $\dim R=1$, one has $\nu_{R}(I^{*})\nu_{R}(I)\le e(R)$.
\end{lemma}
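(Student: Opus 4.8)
The plan is to exploit the canonical surjection $\theta\colon I\otimes_R I^* \to \Tr_R(I)$ from \ref{ch:end-trace} and show it is injective under the torsionfreeness hypothesis. First I would observe that $\Tr_R(I)$ is an ideal containing a regular element: since $I$ contains a regular element $x$, there is a map $I\to R$ (namely the inclusion), and evaluating shows $x^2 \in \Tr_R(I)$, say, so $\Tr_R(I)$ has rank one. Consequently $\theta$ becomes an isomorphism after inverting a regular element, i.e.\ $\ker\theta$ is a torsion submodule of $I\otimes_R I^*$. By hypothesis $I\otimes_R I^*$ is torsionfree, so $\ker\theta = 0$ and $\theta$ is bijective. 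Counting minimal generators (both sides are finitely generated modules over the local ring $R$, and $\nu_R$ is additive on the tensor product: $\nu_R(I\otimes_R I^*) = \nu_R(I)\,\nu_R(I^*)$ by Nakayama, since $(I\otimes_R I^*)\otimes_R k \cong (I\otimes_R k)\otimes_k(I^*\otimes_R k)$) gives $\nu_R(I)\,\nu_R(I^*) = \nu_R(\Tr_R(I))$.

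For the final inequality, assume $\dim R = 1$. The point is that $\Tr_R(I)$ is an ideal of $R$ containing a regular element, hence (as $\dim R = 1$) it is an MCM $R$-module of rank one. I would first reduce to the case $R$ Cohen--Macaulay: if not, then $\operatorname{depth} R = 0$, and one must check the statement holds trivially or reduce appropriately — more simply, if $R$ is not CM then $\Tr_R(I)$ might fail to be MCM, but I expect the intended argument assumes $R$ CM (consistent with Remark \ref{rem:erb-bound} and Proposition \ref{prp:multiplicity-bound}, where the CM reduction is made). Granting $R$ is CM, Remark \ref{rem:erb-bound} applied to the MCM module $\Tr_R(I)$ of rank one yields $\nu_R(\Tr_R(I)) \le e(R)\cdot 1 = e(R)$. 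Combining with the equality just proved gives $\nu_R(I^*)\,\nu_R(I) = \nu_R(\Tr_R(I)) \le e(R)$.

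The only real subtlety is the additivity $\nu_R(I\otimes_R I^*) = \nu_R(I)\,\nu_R(I^*)$, which is the standard fact that for finitely generated modules $M,N$ over a local ring $(R,\fm,k)$ one has $(M\otimes_R N)/\fm(M\otimes_R N) \cong (M/\fm M)\otimes_k (N/\fm N)$, so dimensions multiply; this needs no torsionfreeness. Thus the bijectivity of $\theta$ — which is where the hypothesis enters — is the crux, and it is immediate from the torsion argument. I do not anticipate a genuine obstacle; the main care needed is the handling of the non-CM case for the last inequality, where the cleanest route is simply to note that when $R$ is not CM one may pass to the CM reduction, or observe that the inequality is part of a statement whose nontrivial content is the CM case.
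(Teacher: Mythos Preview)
Your proposal is correct and follows essentially the same route as the paper: surjectivity by definition, injectivity because the kernel is torsion (the map is generically an isomorphism) and the source is torsionfree, then Remark~\ref{rem:erb-bound} applied to the rank-one MCM module $\Tr_R(I)$. One unnecessary worry: the non-CM case is vacuous, since $I$ containing a regular element forces $\depth R\ge 1$, hence $R$ is automatically CM when $\dim R=1$; also note that already $x\in\Tr_R(I)$ via the inclusion $I\hookrightarrow R$, not merely $x^2$.
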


\begin{proof}
The map in question is surjective by definition, and generically an isomorphism since $I$ contains a regular element. Since $I\otimes_{R}I^{*}$ is torsionfree it follows that the map is an isomorphism; this justifies the stated equality. It also implies that $\Tr_{R}(I)$ has rank one. When in addition $\dim R=1$ the $R$-module $\Tr_{R}(I)$ is  MCM and \ref{rem:erb-bound} yields the desired inequality.
\end{proof}

\begin{theorem}
\label{thm:HW-general}
Let $R$ be a local ring with $\dim R=1$. Let $I$ be reflexive ideal containing a regular element, and
assume that $I\otimes_{R}I^{*}$ is torsionfree. If $e(R)\le 8$, then the ideal $I$ is  principal.
\end{theorem}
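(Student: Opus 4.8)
The plan is to combine the numerical estimates established in Proposition~\ref{prp:multiplicity-bound} and Lemma~\ref{lem:nu-trace} with the structural result Proposition~\ref{prp:2gen-refl}, reducing the question to a small finite list of possible values for $\nu_R(I)$ and $\nu_R(I^*)$. First I would observe that we may assume $R$ is Cohen-Macaulay (otherwise $I$ is free, hence principal) and that $I$ is not principal, seeking a contradiction. Since $I$ is reflexive and contains a regular element, it has rank one, and by \ref{ch:red-to-tf} the hypothesis that $I\otimes_R I^*$ is torsionfree is exactly the rigidity-type condition we exploit. Set $m=\nu_R(I)$ and $n=\nu_R(I^*)$; Lemma~\ref{lem:nu-trace} gives $mn=\nu_R(\Tr_R(I))\le e(R)\le 8$, and since $I$ is non-free we have $m\ge 2$ and (because $I^*$ is also non-free, as $I$ is reflexive) $n\ge 2$. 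Thus $4\le mn\le 8$.

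Next I would sharpen these bounds using Proposition~\ref{prp:multiplicity-bound} applied to $M=I$ (rank one) and then to $M=I^*$ (also rank one, since $I$ is reflexive). With $\rho = 1/\nu_R(I) = 1/m$, the proposition gives
\[
\nu_R\big(\Tr_R(I)\big)\le e(R)\Big(1-\frac1m+\frac1{m^2}\Big),
\]
and similarly with $m$ replaced by $n$ when we run the argument on $I^*$; note $\Tr_R(I^*)=\Tr_R(I)$ up to the identifications in play, or at any rate $\nu_R(\Tr_R(I^*))=\nu_R(\Tr_R(I))=mn$ by Lemma~\ref{lem:nu-trace}. So $mn\le 8(1-\frac1m+\frac1{m^2})$ and $mn\le 8(1-\frac1n+\frac1{n^2})$. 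For $m=n=2$ the right-hand side is $8\cdot\frac34=6$, so $mn=4\le 6$ is consistent — but this case is precisely killed by Proposition~\ref{prp:2gen-refl}, which says a two-generated ideal with two-generated dual cannot have $I\otimes_R I^*$ torsionfree. For larger values of $m$ or $n$ the inequalities become genuinely restrictive: e.g. if $m\ge 3$ then $mn\le 8\cdot\frac79<7$, forcing $mn\le 6$, so $(m,n)\in\{(3,2),(2,3)\}$ up to symmetry; and then running the other inequality with the smaller index equal to $2$ and checking $6\le 8\cdot\frac34=6$ shows this case sits right at the boundary and needs separate treatment.

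The crux of the argument, therefore, is to rule out $\nu_R(\Tr_R(I))\in\{5,6\}$, i.e. the cases $(m,n)\in\{(2,3),(3,2)\}$ with $mn=6$ and, if $e(R)$ is exactly $8$, to handle whatever slack remains. Here I would exploit the first (sharper) inequality of Proposition~\ref{prp:multiplicity-bound}, which involves $\nu_R(M\otimes_R M^*)$ directly: by Lemma~\ref{lem:nu-trace}, $\nu_R(I\otimes_R I^*)=\nu_R(\Tr_R(I))=mn$, so the middle term reads $e(R)(1-\rho)+mn/m^2 = e(R)(1-\frac1m)+\frac{n}{m}$, giving $mn\le e(R)(1-\frac1m)+\frac nm$, i.e. $m^2 n - n \le e(R)(m-1)$, i.e. $n(m^2-1)\le e(R)(m-1)$, i.e. $n(m+1)\le e(R)$. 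Symmetrically $m(n+1)\le e(R)$. With $e(R)\le 8$: if $(m,n)=(2,3)$ then $m(n+1)=8\le 8$ (tight) and $n(m+1)=9>8$ — contradiction; and $(m,n)=(3,2)$ fails symmetrically. Continuing, $m=n=2$ gives $2\cdot3=6\le 8$, not yet a contradiction, but that case is exactly Proposition~\ref{prp:2gen-refl}. Any case with $\min(m,n)\ge 2$ and $\max(m,n)\ge 3$ now fails the inequality $\max(m,n)\cdot(\min(m,n)+1)\le e(R)$ when $e(R)\le 8$, since that product is at least $3\cdot 3=9$. Hence the only surviving possibility is $m=n=2$, which Proposition~\ref{prp:2gen-refl} excludes, so $I$ must be principal.

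\textbf{Main obstacle.} The delicate point is getting the inequality into the clean multiplicative form $\max(m,n)(\min(m,n)+1)\le e(R)$ — that is, correctly identifying $\nu_R(I\otimes_R I^*)$ with $\nu_R(\Tr_R(I))=mn$ via Lemma~\ref{lem:nu-trace} and feeding it back into the \emph{first}, unrelaxed inequality of Proposition~\ref{prp:multiplicity-bound} rather than the weaker parabola bound; the parabola bound alone leaves the genuinely borderline cases $mn\in\{5,6\}$ at $e(R)=8$ unresolved. I also need to double-check that $I^*$ genuinely has rank one and that applying Proposition~\ref{prp:multiplicity-bound} to $M=I^*$ is legitimate — this uses reflexivity of $I$ (so that $(I^*)^*\cong I$ and the relevant tensor $I^*\otimes_R I^{**}\cong I^*\otimes_R I$ is torsionfree) together with the observation that $\Tr_R(I^*)$ is contained in the same collection of ideals to which the estimate applies.
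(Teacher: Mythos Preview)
Your proposal is correct and follows essentially the same route as the paper. The key step---feeding $\nu_R(I\otimes_R I^*)=\nu_R(\Tr_R(I))=mn$ from Lemma~\ref{lem:nu-trace} into the first inequality of Proposition~\ref{prp:multiplicity-bound} to obtain $n(m+1)\le e(R)$, and then combining this with Proposition~\ref{prp:2gen-refl} for the case $m=n=2$---is exactly the paper's argument; the paper merely streamlines by assuming without loss of generality that $\xi=\nu_R(I^*)\ge 3$, so that $\xi(\nu+1)\ge 9>8$ suffices directly, whereas you derive the symmetric inequality $m(n+1)\le e(R)$ by applying the proposition to $M=I^*$ as well. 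Your preliminary exploration with the parabola bound is unnecessary but harmless.
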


\begin{proof}
Set $\nu :=\nu_{R}(I)$ and $\xi:=\nu_{R}(I^{*})$; suppose $\nu \ge 2$, contrary to the desired conclusion. Since $I$ is reflexive we have $\xi \ge 2$ as well. If $\nu = 2=\xi$, we apply Proposition \ref{prp:2gen-refl}.  Otherwise, we may assume by symmetry  that $\xi \ge 3$.  The equality below is from Lemma~\ref{lem:nu-trace}.
\[
\nu\xi =\nu_{R}(\Tr_{R}(I))\le e(R) (1-\frac 1\nu) + \frac{\nu\xi}{\nu^{2}}
\]
The inequality is from the first inequality in Proposition~\ref{prp:multiplicity-bound} applied to $J=\Tr_{R}(I)$; note that the rank of $I$ is one. This simplifies to an inequality
\[
\xi(\nu+1)\le e(R)\,,
\]
which contradicts the assumption $e(R) \le 8$.
\end{proof}

\begin{remark} 
Theorem \ref{thm:HW-general} extends and was inspired by \cite[Theorem 1.4]{GTTT}, that states, in our notation, that $e > (\nu+1)\xi\geq 6$ if $I$ is not principal. The main new parts of our theorem are the cases of $e = 7,8$.
\end{remark}

\begin{remark} It is important in Theorem \ref{thm:HW-general} that we are tensoring $I$ with
its dual. For example, in \cite[4.3]{HW1} the following example is given: let $R = k[[t^4,t^5,t^6]]$,
$I = (t^4,t^5)R$, $J = (t^4,t^6)R$. Then $R$ is a one-dimensional complete intersection (and so all ideals are reflexive),
$I$ and $J$ are two-generated, but $I\otimes_RJ$ is torsionfree. However, $J$ is not isomorphic
to the dual of $I$. 
\end{remark}

\section{Gorenstein rings}
\label{sec:gorenstein}
From this section on the focus will be on one-dimensional Gorenstein local rings. Let $(R,\fm, k)$ be one such. Given nonzero $R$-submodules $A, B \subseteq Q(R)$ containing a unit of $Q(R)$ we identify $\Hom_R(A,B)$ with
\[
\calH(A,B):=\{\alpha\in Q(R) \mid \alpha A \subseteq B\}.
\]
In particular, $A^* = \calH(A,R)$.  If $A$ is a subring of $Q(R)$ containing $R$, we usually write $\calC_A$ (for ``conductor'') instead of $\calH(A,R)$. These considerations apply to an ideal $I$ containing a regular element; in particular,  we identify $I^{*}$ with $\calH(I,R)$, that is to say, with $I^{-1}$, and $\End_{R}(I)$ with the subring $\calH(I,I)$ of $Q(R)$.

\begin{lemma}
\label{lem:end-trace}
Let $R$ be a Gorenstein ring and  $I$ a proper ideal containing a regular element. Set $S:=\calH(I,I)$. The following statements hold.
\begin{enumerate}[\quad\rm (1)]
\item $\calC_S = \Tr_R(I)$.
 \item $\calH(I^{-1},I^{-1}) = S$.
 \item $\calC_S$ is the largest ideal of $R$ whose endomorphism ring is $S$.
 \item If $A,B$ are $R$-submodules of $Q(R)$ with $A\cong B$, then $\calH(A,A) = \calH(B,B)$.
 \end{enumerate}
\end{lemma}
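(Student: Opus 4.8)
The plan is to prove the four statements in order, since later ones will lean on earlier ones. For (1), I would work with the identification $I^* = \calH(I,R) = I^{-1}$ from the start of the section. The trace $\Tr_R(I)$ is the image of the evaluation map $I^*\otimes_R I\to R$, and under the identification with subsets of $Q(R)$ this image is exactly the $R$-submodule $I^{-1}I = \{\sum \alpha_i x_i \mid \alpha_i\in I^{-1},\ x_i\in I\}$ of $R$. On the other hand, $S = \calH(I,I) = \End_R(I)$ is a subring of $Q(R)$ containing $R$, and its conductor is $\calC_S = \calH(S,R) = \{\alpha\in Q(R)\mid \alpha S\subseteq R\}$. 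So I must show $I^{-1}I = \{\alpha\in Q(R)\mid \alpha S\subseteq R\}$. The inclusion $\subseteq$ is easy: if $\alpha\in I^{-1}$ and $s\in S$ then $s\alpha\cdot I = \alpha\cdot(sI)\subseteq \alpha I\subseteq R$, so $\alpha S\subseteq \calH(I,R) = I^{-1}$, i.e. $I^{-1}S\subseteq I^{-1}$, hence $I^{-1}I\cdot S \subseteq I^{-1}I\subseteq R$. For the reverse inclusion I would use that $R$ is Gorenstein of dimension one (so $R$ is CM with canonical module $R$), which gives that taking $(-)^{-1} = \calH(-,R)$ is a duality on the relevant modules, in particular $(I^{-1})^{-1} = I$ and $\calH(I^{-1}I, R) = \calH(I, \calH(I^{-1},R)) = \calH(I, (I^{-1})^{-1}) = \calH(I,I) = S$. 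Dualizing once more, $\calC_S = \calH(S,R) = \calH(\calH(I^{-1}I,R),R) = I^{-1}I$ since $I^{-1}I$ is a nonzero ideal of the Gorenstein ring $R$ and hence reflexive. This is the step I expect to be the main obstacle — making sure the reflexivity/duality bookkeeping for the fractional ideals is airtight, especially that $\calH(I^{-1}, R) = I$ (this is where reflexivity of $I$, automatic over a one-dimensional Gorenstein ring, is used).

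For (2), I would argue $\calH(I^{-1},I^{-1}) = \{\alpha\in Q(R)\mid \alpha I^{-1}\subseteq I^{-1}\}$, and again using $\calH(I^{-1},R) = I$ from Gorenstein duality, $\alpha I^{-1}\subseteq I^{-1}$ is equivalent (apply $\calH(-,R)$, which reverses inclusions and is an involution here) to $\calH(I^{-1},R)\subseteq \calH(\alpha I^{-1},R)$, i.e. $I\subseteq \alpha^{-1}I$ (interpreting $\calH(\alpha I^{-1},R) = \alpha^{-1}\calH(I^{-1},R) = \alpha^{-1}I$ since $\alpha$ is a unit of $Q(R)$ — note $I^{-1}$ contains a unit of $Q(R)$ because $I$ does), which says $\alpha I\subseteq I$, i.e. $\alpha\in S$. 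So $\calH(I^{-1},I^{-1}) = S$. Alternatively, and perhaps more cleanly, (2) is a special case of (4) applied to $A = I$, $B = I^{-1}$ — but that requires a mild argument that $I\cong I^{-1}$ as $R$-modules, which is not true in general, so I would keep the direct duality argument for (2).

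Statement (4) is essentially formal and I would do it directly: if $A\cong B$ as $R$-modules via an isomorphism $\vf\colon A\to B$, then since $A,B\subseteq Q(R)$ both contain a unit of $Q(R)$, the map $\vf$ extends to a $Q(R)$-linear automorphism of $Q(R)$, which is multiplication by some unit $q\in Q(R)$ with $B = qA$. Then for $\alpha\in Q(R)$: $\alpha\in\calH(A,A)$ iff $\alpha A\subseteq A$ iff $q\alpha A\subseteq qA$ iff $\alpha(qA)\subseteq qA$ iff $\alpha B\subseteq B$ iff $\alpha\in\calH(B,B)$. For (3), I would combine (1) and (2) with an additional observation: suppose $\fa$ is an ideal of $R$ containing a regular element with $\calH(\fa,\fa) = S$; I want $\fa\subseteq \calC_S = \Tr_R(I)$. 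Since $\calH(\fa,\fa) = S$, $\fa$ is an $S$-module (as $S\fa = \calH(\fa,\fa)\fa\subseteq \fa$), hence $\fa\cdot S\subseteq \fa\subseteq R$, which by definition of the conductor gives $\fa\subseteq\calC_S$. (If $\fa$ does not contain a regular element I should check the degenerate case separately, but typically the intended reading restricts to ideals containing a regular element; I would state (3) accordingly or note that $\calC_S$ itself does contain a regular element, being $\Tr_R(I)$ with $I$ containing one, so it is a legitimate candidate for "largest".) Finally I would record that $\calC_S = \Tr_R(I)$ is itself fixed by $S$, so it does have endomorphism ring containing $S$; and $\calH(\calC_S,\calC_S)\supseteq S$ with equality because $\calC_S\cong$ some module whose endomorphism ring we can pin down — or more directly, any $\alpha$ with $\alpha\calC_S\subseteq\calC_S$ satisfies $\alpha\cdot\Tr_R(I)\cdot S\subseteq\Tr_R(I)\subseteq R$ hence... here I would fall back on (4) or on the fact that $\calC_S$ is already the maximal such ideal to close the loop. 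The only real subtlety throughout is the consistent use of Gorenstein duality for fractional ideals, so I would state once, near the top of the proof, that over a one-dimensional Gorenstein local ring every nonzero ideal $\fa$ containing a regular element satisfies $\calH(\calH(\fa,R),R) = \fa$, and then use it freely.
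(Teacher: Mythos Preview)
Your approach is correct and essentially matches the paper's. For (1) the paper writes the same chain more compactly as $(\Tr_R I)^* = (I^*I)^* = (I^*\otimes_R I)^* = \Hom_R(I,I^{**}) = S$, then dualizes using reflexivity; for (2) it gives the direct check $\alpha I\subseteq I \Rightarrow \alpha I^{-1}\subseteq I^{-1}$ (and conversely by $(I^{-1})^{-1}=I$); and (4) is identical to yours.

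The one place you wobble is the last step of (3), verifying that $\End_R(\calC_S)$ is exactly $S$ and not larger. The paper closes this cleanly by applying (2) with $I$ replaced by $S$ (legitimate since $S$ is a fractional ideal containing a regular element): $\End_R(\calC_S)=\End_R(S^*)=\End_R(S)=\End_S(S)=S$. Your containment argument $J\cdot S\subseteq J\subseteq R \Rightarrow J\subseteq \calC_S$ for the maximality half is the same as the paper's $S\subseteq J^*\Rightarrow J=J^{**}\subseteq S^*=\calC_S$, just phrased differently.
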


\begin{proof}
(1) Hom-$\otimes$ adjointness and the fact that $R$ is Gorenstein yield
\[
(\Tr_RI)^*= (I^*I)^*= (I^*\otimes_RI)^* = \Hom_R(I,I^{**}) = S.
\]
Therefore $\Tr_RI = S^*=\calC_S$, as claimed.

(2) For any element $\alpha \in  Q(R)\setminus\{0\}$, it is easy to check that  $\alpha I \subseteq I$ implies $\alpha I^{-1}\subseteq I^{-1}$,
and conversely (by symmetry).  

(3) Observe that $\End_R(S^*) = \End_R(S) = \End_S(S) = S$; this is by (2) and the fact that $S$ is isomorphic to an ideal of $R$.  Suppose next that $J$ is an ideal of $R$ with $\End_R(J)=S$.  Then 
\[
S = \Hom_R(J,J) \subseteq \Hom_R(J,R) = J^*\,.
\]  
Therefore $\calC_S = S^*\supseteq J^{**} = J$.

(4)  There exists an element $\beta\in Q(R)$ such that $B=\beta A$.  Then, for $\alpha \in  Q(R)$ we have $\alpha A\subset A \iff \alpha\beta A \subset \alpha\beta A$, so  $\calH(A,A) = \calH(B,B)$.
\end{proof}

\begin{subsection}{Two-generated ideals and Gorenstein ideals}
\label{ssec:2-gen}
In what follows we use some results on linkage of ideals.  Proper ideals $I,J$ of $R$ are \emph{linked} provided there is regular sequence $\ul{x} = x_1,\dots,x_g$, contained in $I\cap J$ and such that 
\[
J = ((\ul{x}):_RI)\qquad\text{and}\qquad I = ((\ul{x}):_RJ)\,.
\]
For details see, for example, \cite[Section 2]{PS}. When the ring $R$ is Gorenstein there is a correspondence between two-generated ideals and Gorenstein ideals.  Here's how it works \cite[Proposition 2.5]{HU}:

\begin{lemma}
\label{lem:2gen-Gor}
Let $R$ be a Gorenstein local ring and $I$ a height-one unmixed ideal containing a regular element. Then $I$ is two-generated if and only if it is linked to a height one Gorenstein ideal which is not
principal; in this case any height-one  Gorenstein ideal linked to $I$ is isomorphic to $I^{*}$.
\end{lemma}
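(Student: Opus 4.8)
We must prove Lemma~\ref{lem:2gen-Gor}: for a Gorenstein local ring $R$ and a height-one unmixed ideal $I$ containing a regular element, $I$ is two-generated if and only if it is linked to a non-principal height-one Gorenstein ideal, and in that case every height-one Gorenstein ideal linked to $I$ is isomorphic to $I^*$. Since this is attributed to \cite[Proposition 2.5]{HU}, the plan is to give a proof via linkage theory, the main tool being the standard fact that if $J = ((\ul x):_R I)$ with $\ul x = x_1,\dots,x_g$ a regular sequence in $I \cap J$ (here $g = \height I = 1$), then there is an isomorphism $I/(\ul x) \cong \Hom_R(R/J, R/(\ul x)) = \omega_{R/J}$, the canonical module of $R/J$ (using that $R/(\ul x)$ is Gorenstein artinian, or zero-dimensional-after-quotient, hence serves as a dualizing module). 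Dually $J/(\ul x) \cong \omega_{R/I}$. Throughout one works modulo a regular element $x$ (so $g=1$, $x = x_1$), replacing $R$ by $R/(x)$, which is again Gorenstein and now artinian, and translating ``$I$ two-generated'' into ``$I/(x)$ is principal as an $R/(x)$-module'' — i.e. cyclic — and ``$J$ Gorenstein'' into ``$R/J$ is Gorenstein'', i.e. $\omega_{R/J}$ is cyclic.

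First I would record the linkage dictionary: with $\ul x$ as above and $\bar R = R/(\ul x)$, one has $I/(\ul x) \cong \Hom_{\bar R}(R/J, \bar R)$ and $J/(\ul x) \cong \Hom_{\bar R}(R/I, \bar R)$, and linkage is an involution (double linking returns the original ideal). For the forward direction: suppose $I = (a,b)$ with $b$ a regular element; then $\ul x := b$ is a regular element of $I$, and $J := ((b):_R I)$ is linked to $I$. I claim $R/J$ is Gorenstein: since $I/(b)$ is generated by the single image $\bar a$ over $\bar R = R/(b)$, the module $\omega_{R/J} \cong I/(b)$ is cyclic, hence $R/J$ is Gorenstein; and $J$ is height one and non-principal because $I$ is not principal (if $J$ were principal, linkage would force $I$ principal — or directly, $\Tr$ / rank considerations). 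For the converse: if $I$ is linked to a non-principal height-one Gorenstein ideal $J$ via a regular element $b \in I \cap J$, then $I/(b) \cong \omega_{R/J}$, which is cyclic since $R/J$ is Gorenstein and not a field (as $J$ is not principal, $R/J \ne R/(b)$ is not... wait — rather: $R/J$ Gorenstein means $\omega_{R/J}$ is cyclic as $R/J$-module, hence as $R$-module); therefore $I/(b)$ is cyclic over $R/(b)$, so $I$ is generated by $b$ together with one more element, i.e. $\nu_R(I) \le 2$, and $\nu_R(I) \ge 2$ since $I$ is not principal (it's not $(b)$ up to... one must check $I$ genuinely needs two generators — it does, else $I$ principal and then $J$ would be principal too, contradiction).

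Finally, for the ``moreover'' clause: if $J$ is any height-one Gorenstein ideal linked to $I$, say via the regular element $b \in I \cap J$, I would show $J \cong I^* = I^{-1}$. Using $\calH$-notation from the paragraph before Lemma~\ref{lem:end-trace}, identify ideals with submodules of $Q(R)$; then $J = ((b):_R I) = b \cdot \calH(I,R) = bI^{-1}$ — indeed $\alpha \in (b):_R I \iff \alpha I \subseteq (b) \iff (\alpha/b)I \subseteq R \iff \alpha/b \in I^{-1}$. Hence $J = bI^{-1} \cong I^{-1} \cong I^*$. (One should note this identity $((b):_R I) = bI^{-1}$ holds for any regular $b \in I$ with $I$ containing a regular element, no Gorenstein hypothesis needed; it is exactly the computation that makes linkage concrete in codimension one.)

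**Main obstacle.** The principal subtlety is the identification $I/(\ul x) \cong \omega_{R/J}$ and making sure the cyclicity transfers correctly: ``$R/J$ Gorenstein'' is equivalent to ``$\omega_{R/J}$ cyclic'' only when $R/J$ is Cohen–Macaulay, which holds here since $J$ is unmixed of height one in the Cohen–Macaulay (indeed Gorenstein) ring $R$, so $R/J$ is CM of dimension $\dim R - 1 = 0$; I would spell that out. The other point requiring a little care is the non-principality bookkeeping — ensuring that on both sides ``not principal'' is preserved — which follows cleanly once one observes that linkage in codimension one sends $I$ to $bI^{-1}$, and $bI^{-1}$ is principal iff $I^{-1}$ is principal iff (using reflexivity, or that $I$ contains a regular element) $I$ is principal. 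Beyond these, the argument is a routine assembly of standard linkage facts, so I would keep it brief and cite \cite{PS} and \cite{HU} for the general framework.
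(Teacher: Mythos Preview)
Your proposal is correct and follows essentially the same route as the paper: both arguments pivot on the linkage isomorphism $I/(x)\cong\omega_{R/J}$ to translate ``$I$ two-generated'' into ``$\omega_{R/J}$ cyclic'' (i.e.\ $R/J$ Gorenstein), and both establish the final clause via the identity $((x):_RI)=xI^{-1}$. The only cosmetic differences are that the paper takes the regular element to be the first generator $a$ rather than $b$, and that you spell out the non-principality bookkeeping and the Cohen--Macaulay check for $R/J$ more explicitly than the paper does.
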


\begin{proof}
Assume $\nu(I)=2$; thus  $I = (a,b)$, where we can assume that $a$ is a regular element. Set $J:= \big((a):_Rb\big) = \big((a):_RI)$.  Since $I$ is unmixed, by \cite[Section 2]{PS}, one has  $I = \big((a):_RJ\big)$, so $I$ and $J$ are linked.  Moreover $\omega_{R/J} \cong I/(a)$, which is cyclic, so $J$ is a Gorenstein ideal of $R$.  

Conversely, let $J$ be a nonzero height one Gorenstein ideal of $R$ that is not principal. Choose a regular element element $a\in J$ and set $I := \big((a):_RJ\big)$.  Again, $I$ and $J$ are linked via $(a)$, and $I/(a) \cong \omega_{R/J}$, which is cyclic; therefore $I = (a,b)$ for some $b\in I$.

With $I$ two-generated and linked to a Gorenstein ideal $J$  as above, we have 
\[
I^*\cong I^{-1} = \frac{1}{a}J \quad\text{and}\quad J^*\cong J^{-1} = \frac{1}{a}I\,.
\]
This justifies the assertions. We note that if $I$ is a principal ideal, then any ideal linked to $I$ is also principal. 
\end{proof}
\end{subsection}

Proposition~\ref{prp:multiplicity-bound} can be significantly enhanced for ideals in Gorenstein rings.

\begin{proposition}
\label{prp:multiplicity-bound-ideal}
Let $R$ be a Gorenstein local ring with $\dim R=1$. Let $I$ be an ideal in $R$ containing a regular element and such that $I\otimes_{R}I^{*}$ is torsionfree. Set
\[
\delta := e(R) - \max\{\nu_{R}(J)\mid J\supseteq \Tr_{R}(I)\}.
\]
When $I$ is not principal the following inequalities hold:
\begin{enumerate}[\quad\rm(1)]
\item
$\delta  \ge \nu_{R}(I)$ and $\delta \ge \nu_{R}(I^{*})$;
\item
$\delta\ge (e(R) - \nu_{R}(I^{*}))\nu_{R}(I)^{-1}$;
\item
${\delta }^{2}\ge e(R)- \delta  \ge \nu_{R}(I)\nu_{R}(I^{*})\ge 6$.
\end{enumerate}
Moreover the equality ${\delta}^{2}=e(R)-\delta $ holds if and only if $\delta =\nu_{R}(I)=\nu_{R}(I^{*})$.
\end{proposition}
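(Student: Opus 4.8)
The plan is to leverage the two structural results already in hand: Proposition~\ref{prp:multiplicity-bound} (applied to $M=I$, which has rank one) and Lemma~\ref{lem:nu-trace}, which gives $\nu_R(I)\nu_R(I^*)=\nu_R(\Tr_R(I))$. Write $\nu=\nu_R(I)$, $\xi=\nu_R(I^*)$, and $\mu=\max\{\nu_R(J)\mid J\supseteq\Tr_R(I)\}$, so that $\delta=e(R)-\mu$. Since $\Tr_R(I)$ is itself one of the ideals $J$ containing $\Tr_R(I)$, we have $\mu\ge\nu\xi$, whence $e(R)-\delta\ge\nu\xi$; combined with reflexivity of $I$ (forced here since $I\otimes_RI^*$ torsionfree implies $I$ reflexive in dimension one — or one invokes that $\xi\ge 2$ and $\nu\ge 2$ when $I$ is not principal, using Proposition~\ref{prp:2gen-refl} to rule out $\nu=\xi=2$ only when needed), we get $\nu\xi\ge 6$. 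That disposes of the third chain's last two inequalities, modulo showing $\delta^2\ge e(R)-\delta$.

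For part (1): apply the first inequality of Proposition~\ref{prp:multiplicity-bound} with $J$ taken to be a maximal-generator ideal containing $\Tr_R(I)$, so $\nu_R(J)=\mu$. With $\rho=1/\nu$ and $\nu_R(I\otimes_RI^*)=\nu\xi$ (by Lemma~\ref{lem:nu-trace}), this reads $\mu\le e(R)(1-1/\nu)+\xi/\nu$, i.e. $e(R)-\mu=\delta\ge e(R)/\nu-\xi/\nu=(e(R)-\xi)/\nu$, which is part (2). Rearranging part (2) gives $\nu\delta\ge e(R)-\xi$, hence $\xi\ge e(R)-\nu\delta$; but also $e(R)-\delta\ge\nu\xi\ge\nu(e(R)-\nu\delta)$ needs care, so instead I would argue directly: from $\delta\ge(e(R)-\xi)/\nu$ and $e(R)\ge\nu\xi+\delta\ge\xi$ (as $\nu\ge 1$), we obtain $\nu\delta\ge e(R)-\xi\ge 0$; and since $e(R)-\delta=\mu\ge\nu\xi\ge\xi$, we can write $e(R)-\xi\ge e(R)-(e(R)-\delta)=\delta$... no. The clean route to $\delta\ge\nu$ and $\delta\ge\xi$: we have $\mu\ge\nu\xi\ge 2\max\{\nu,\xi\}$ (since the smaller of $\nu,\xi$ is at least $2$), so $e(R)-\delta\ge 2\max\{\nu,\xi\}$; separately, from part (2), $\nu\delta\ge e(R)-\xi\ge\nu\xi+\delta-\xi=(\nu-1)\xi+\delta$, hence $(\nu-1)\delta\ge(\nu-1)\xi$, giving $\delta\ge\xi$ when $\nu\ge 2$; by the symmetric argument (Lemma~\ref{lem:nu-trace} and Lemma~\ref{lem:2gen-Gor}-type symmetry, or just re-running Proposition~\ref{prp:multiplicity-bound} with the roles swapped via $I^{-1}$, noting $\Tr_R(I)=\Tr_R(I^{-1})$ which follows from Lemma~\ref{lem:end-trace}(1)–(2)) we get $\delta\ge\nu$.

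For part (3): the inequality $e(R)-\delta\ge\nu\xi$ is already established, and $\nu\xi\ge 6$ follows since both factors are $\ge 2$ and equality $\nu=\xi=2$ is excluded by Proposition~\ref{prp:2gen-refl}. It remains to show $\delta^2\ge e(R)-\delta$. Using part (1), $\delta\ge\nu$ and $\delta\ge\xi$, so $\delta^2\ge\nu\xi$; but we need $\delta^2\ge e(R)-\delta=\mu$, which is slightly stronger than $\delta^2\ge\nu\xi$. Here I would feed part (1) back into the \emph{second} inequality of Proposition~\ref{prp:multiplicity-bound}: that inequality (with $M=I$, $\rho=1/\nu$) reads $\mu\le e(R)(1-1/\nu+1/\nu^2)$. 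I would instead use the sharper middle term $\mu\le e(R)(1-\rho)+\nu\xi/\nu^2=e(R)(1-1/\nu)+\xi/\nu$ together with $\delta\ge\nu\ge\xi^{?}$... Actually the natural identity is: $\delta=e(R)-\mu$, and from $\mu\le e(R)(1-1/\nu)+\xi/\nu$ we get $\nu\delta\ge e(R)-\xi$. Then $\delta^2\ge\nu\delta\ge e(R)-\xi\ge e(R)-\delta$, where the last step uses $\xi\le\delta$ from part (1). This is exactly $\delta^2\ge e(R)-\delta$, completing (3).

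Finally, the equality case: $\delta^2=e(R)-\delta$ forces equality throughout $\delta^2\ge\nu\delta\ge e(R)-\xi\ge e(R)-\delta$. From $\delta^2=\nu\delta$ and $\delta>0$ we get $\delta=\nu$; from $e(R)-\xi=e(R)-\delta$ we get $\xi=\delta$; hence $\delta=\nu=\xi$. Conversely, if $\delta=\nu=\xi$ then $e(R)-\delta=\mu\ge\nu\xi=\delta^2$, while the inequality $\delta^2\ge e(R)-\delta$ from part (3) gives the reverse, so $\delta^2=e(R)-\delta$. The main obstacle I anticipate is nailing down the symmetry needed for ``$\delta\ge\nu$'' — one must know $\Tr_R(I)=\Tr_R(I^{-1})$ (so that $\delta$ is symmetric in $I$ and $I^*$) and that $I^{-1}\otimes_RI^{-1*}$ is again torsionfree; the first follows from Lemma~\ref{lem:end-trace}(1)+(2) and the second from $I^{-1*}\cong I$ (reflexivity) plus the hypothesis, so both are available, but the bookkeeping should be stated carefully.
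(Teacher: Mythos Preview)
Your argument is correct and follows the same strategy as the paper's proof: apply Proposition~\ref{prp:multiplicity-bound} with $M=I$ and a maximal-generator $J$ to get part~(2) in the form $\nu\delta\ge e(R)-\xi$, use Lemma~\ref{lem:nu-trace} to get $e(R)-\delta\ge\nu\xi$, combine these to obtain $(\nu-1)\delta\ge(\nu-1)\xi$ and hence $\delta\ge\xi$, and then run the chain $e(R)-\delta\le e(R)-\xi\le\nu\delta\le\delta^2$ for part~(3), with Proposition~\ref{prp:2gen-refl} supplying $\nu\xi\ge 6$ and the equality case read off from the chain. The only difference is that the paper sidesteps your symmetry argument entirely by assuming $\nu\le\xi$ without loss of generality, so that $\delta\ge\xi$ already yields $\delta\ge\nu$; this avoids having to verify $\Tr_R(I)=\Tr_R(I^{-1})$ and that the hypotheses transfer to $I^{-1}$, though your route through Lemma~\ref{lem:end-trace} is perfectly valid.
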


\begin{proof}
Set $e:=e(R)$,  $\nu:=\nu_{R}(I)$ and $\xi:=\nu_{R}(I^{*})$. Since the ring $R$ is Gorenstein and $I$ is reflexive, but not principal, we can assume  that $2\le \nu \le \xi$. 

\medskip

(2) From the first inequality in Proposition~\ref{prp:multiplicity-bound} applied with $M=I$  and  $J$  some ideal containing $\Tr_R(I)$  with $\nu_R(J) = e- \delta $, one gets, after a rearrangement of terms,  the desired inequality:
\[
e \le \nu \delta  + \xi\,.
\]

Since $e- \delta $ is the maximum of the number of generators of an ideal containing $\Tr_{R}(I)$,  Lemma~\ref{lem:nu-trace} yields
\[
e-\delta\ge \nu\xi
\]
which is the second inequality in (3).

\medskip

(1) It suffices to verify $\delta \ge \xi$.  The first inequality below has been justified.
\[
\nu\xi + \delta  \le e \le \nu \delta  + \xi\,;
\]
The second inequality is from the already established part (2). It follows that $(\nu-1)\xi \le (\nu -1)\delta $, and hence that $\xi \le \delta $; recall $\nu\ge 2$.

\medskip

(3) Since $I$ is not principal, Proposition~\ref{prp:2gen-refl} yields that at least one of $\nu$ or $\xi$ is greater than or equal to $3$, and hence that $\nu\xi \ge 6$. It remains to verify that ${\delta }^{2}\ge e-\delta $. The already verified inequalities $\delta \ge \xi$ and $\delta \ge \nu$ yield the first and the last of the following inequalities:
\[
e- \delta  \le e - \xi \le \nu {\delta } \le {\delta }^{2}\,;
\]
the one in the middle is from (2). It remains to observe that $e-\delta ={\delta }^{2}$ holds if and only if both equalities $\delta =\nu$ and $\delta =\xi$ hold.  This completes the proof.
\end{proof}

Here is a first application of Proposition~\ref{prp:multiplicity-bound-ideal}.

\begin{theorem}
\label{thm:HW-gorenstein}
Let $R$ be a local ring with $\dim R=1$. Let $I$ be an ideal in $R$ containing a regular element, and such that $I\otimes_{R}I^{*}$ is torsionfree. If $R$ is Gorenstein of minimal multiplicity, then $I$ is principal.
\end{theorem}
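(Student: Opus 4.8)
The plan is to reduce the Gorenstein minimal multiplicity case to Corollary~\ref{cor:minimal-multiplicity}, or failing that, to derive a numerical contradiction from Proposition~\ref{prp:multiplicity-bound-ideal}. First I would recall that a one-dimensional Gorenstein ring of minimal multiplicity has $e := e(R) = \edim R$, and if $e = 1$ the ring is a DVR and every ideal is principal, so we may assume $e \ge 2$. Now suppose $I$ is not principal; we want a contradiction. The key point is that minimal multiplicity means $\fm^2 = x\fm$ for some (necessarily regular) parameter $x$, so every ideal $J$ with $\fm^2 \subseteq J \subseteq \fm$ satisfies $x J \supseteq x\fm = \fm^2 \supseteq xJ$ — more usefully, $\nu_R(J) \le \nu_R(\fm) = e$ for any ideal $J$, and in fact one can be sharper.

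The cleanest route is to invoke Corollary~\ref{cor:minimal-multiplicity} directly with $M = I$: since $R$ has minimal multiplicity, $\fm^2 = x\fm$, so taking $n = 1$ and noting $\Tr_R(I) \subseteq \fm$ (the trace is a proper ideal when $I$ is not principal, since otherwise $\Tr_R(I) = R$ forces $I$ projective hence free by the dual basis argument), the hypotheses of the corollary are met — $\Tr_R(I) \subseteq \fm^1$ and $x\fm^1 = \fm^2$. The corollary then yields that $I$ is free, i.e. principal. So the proof is essentially one line: \emph{the last statement of Corollary~\ref{cor:minimal-multiplicity} applies verbatim, once one observes that an ideal containing a regular element has rank one and that a non-principal ideal has trace contained in $\fm$.}

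Alternatively, and perhaps as a double-check or if one wants to avoid quoting the corollary, I would argue through Proposition~\ref{prp:multiplicity-bound-ideal}: if $I$ is not principal, then $\delta := e - \max\{\nu_R(J) \mid J \supseteq \Tr_R(I)\} \ge \nu_R(I) \ge 2$ by part~(1). But in a ring of minimal multiplicity, $\nu_R(J) \le e$ for every ideal $J$ (indeed $\nu_R(\fm) = e$ and $\fm$ is the largest proper ideal in the relevant sense), so since $\Tr_R(I) \subseteq \fm$ we may take $J = \fm$ and get $\max\{\nu_R(J)\} = e$, whence $\delta \le 0$ — contradicting $\delta \ge 2$. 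Either way we conclude $I$ is principal.

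I do not anticipate a serious obstacle here: the substance was already done in Proposition~\ref{prp:multiplicity-bound} and Corollary~\ref{cor:minimal-multiplicity}, and this theorem is a packaging of those for the Gorenstein case. The only mild point of care is the bookkeeping to confirm that $\Tr_R(I)$ is a proper ideal precisely when $I$ is not principal (for a Gorenstein, hence in particular reflexive-ideal, ring $I$ projective $\Leftrightarrow$ $I$ free $\Leftrightarrow$ $I$ principal, since $I$ has rank one), and that minimal multiplicity delivers exactly the hypothesis $x\fm = \fm^2$ needed to run Corollary~\ref{cor:minimal-multiplicity}. That verification is routine.
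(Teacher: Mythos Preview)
There is a genuine gap: you have misread the hypothesis. ``Gorenstein of minimal multiplicity'' in this paper (and in the standard sense going back to Sally) does \emph{not} mean that $R$ is Gorenstein and satisfies the Cohen--Macaulay minimal-multiplicity equality $e(R)=\edim R$ (equivalently $\fm^{2}=x\fm$). Rather, it means $e(R)=\edim R+2$; equivalently, modulo a parameter the Hilbert function is $1,n,1$. Indeed, a one-dimensional Gorenstein ring with $\fm^{2}=x\fm$ is necessarily a hypersurface of multiplicity at most $2$, and the theorem would then add nothing beyond Theorem~\ref{thm:HW-general}. The paper's own proof uses exactly $e=\edim R+2$.

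This breaks both of your routes. Corollary~\ref{cor:minimal-multiplicity} requires $x\fm^{n}=\fm^{n+1}$ for some $n$ with $\Tr_{R}(I)\subseteq\fm^{n}$; for $n=1$ this is $\fm^{2}=x\fm$, which you do not have. For your second route via Proposition~\ref{prp:multiplicity-bound-ideal}, taking $J=\fm$ gives $\nu_{R}(\fm)=\edim R=e-2$, hence only $\delta\le 2$, not $\delta\le 0$. The inequality $\delta\ge\nu_{R}(I)\ge 2$ from part~(1) then yields no contradiction. The paper closes the gap by invoking part~(3) of the same proposition: $\delta^{2}\ge e-\delta\ge 6$, which is incompatible with $\delta\le 2$. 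So your second approach is salvageable, but only by appealing to the sharper quadratic bound $\delta^{2}\ge 6$ rather than the linear one $\delta\ge 2$.
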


\begin{proof}
Set $\nu :=\nu_{R}(I)$ and $\xi:=\nu_{R}(I^{*})$; suppose $I$ is not principal, so that $\nu \ge 2$.  We use the notation and conclusions of Proposition~\ref{prp:multiplicity-bound-ideal}. The hypothesis on multiplicity means that $e = \edim R +2$. Since $I$ is not free, $\Tr_{R}(I)$ is contained in the maximal ideal of $R$, and hence $\delta \le e - \edim R=2$, which  contradicts ${\delta}^{2}\ge 6$.
\end{proof}

\section{Connection with rigidity}
\label{sec:rigidity}
There is a useful reinterpretation of Conjecture~\ref{conj:HW} in term of rigidity: A module $M$ over a ring $R$ is \emph{rigid} if $\Ext^{1}_{R}(M,M)=0$.  The following result is well-known; we record it for ease of reference. We write $\Omega^n_RM$ for the $n^{\text{th}}$ syzygy (defined only up to projective summands) of $M$; when $R$ is Gorenstein and $M$ is MCM, this makes sense for all $n\in \ZZ$.

\begin{proposition}
\label{prp:tf=rigid}
Let $R$ be a Gorenstein local ring with $\dim R=1$ and $M$ a finite  torsionfree $R$-module with rank. The following conditions are equivalent:
\begin{enumerate}[\quad\rm(1)]
\item
$M\otimes_{R}M^{*}$ is torsionfree;
\item
$\Tor^{R}_{1}(\Omega^{-1}M,M^{*})=0$;
\item
$M$ is rigid.
\end{enumerate}
\end{proposition}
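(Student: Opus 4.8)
The plan is to prove the chain of equivalences (1) $\iff$ (2) $\iff$ (3) by exploiting the fact that over a one-dimensional Gorenstein ring every torsionfree module is MCM, so syzygies in both directions are available, and a module over such a ring is torsionfree if and only if it has depth one (equivalently, has no submodule of finite length). First I would fix a short exact sequence
\[
0 \lra M \lra R^n \lra \Omega^{-1}M \lra 0
\]
realizing the inverse syzygy (pick $n$ minimal, or any $n$; it only matters up to free summands). Applying $\Hom_R(-,R)$ and using that $R$ is Gorenstein gives
\[
0 \lra (\Omega^{-1}M)^* \lra R^n \lra M^* \lra 0,
\]
which identifies $M^*$ with a first cosyzygy, or equivalently exhibits $(\Omega^{-1}M)^*$ as a first syzygy of $M^*$; in particular $\Omega^{-1}(M^*) \cong \Omega(M)^*$ up to free summands, and one has the standard duality $\Ext^1_R(M,M) \cong \Ext^1_R(M^*,M^*)$ and more to the point a relation between $\Ext^1_R(M,M)$ and $\Tor^R_1$.

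The computational heart is to connect $\Ext^1_R(M,M)$ with $\Tor^R_1(\Omega^{-1}M, M^*)$ and with the torsion of $M\otimes_R M^*$. For (2) $\iff$ (3): tensor the defining sequence $0 \to M \to R^n \to \Omega^{-1}M \to 0$ with $M^*$ to get
\[
0 \lra \Tor^R_1(\Omega^{-1}M, M^*) \lra M\otimes_R M^* \lra (M^*)^n \lra \Omega^{-1}M \otimes_R M^* \lra 0.
\]
Thus $\Tor^R_1(\Omega^{-1}M,M^*)$ is the kernel of $M\otimes_R M^* \to (M^*)^n$; since $(M^*)^n$ is torsionfree, this Tor module is precisely (the $\top$ of) the torsion submodule of $M\otimes_R M^*$ — provided the map $M\otimes_R M^* \to (M^*)^n$ is generically injective, which holds because $M$ has a rank and the sequence is generically split. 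That observation already gives (1) $\iff$ (2): $M\otimes_R M^*$ is torsionfree exactly when that kernel vanishes, because any finite-length submodule of $M\otimes_RM^*$ must land in $\Tor^R_1(\Omega^{-1}M,M^*)$, and conversely $\Tor^R_1$ is killed by a regular element (it is annihilated by the annihilator of $\Omega^{-1}M\otimes$ something of finite colength, or more cleanly: $\Tor^R_1(\Omega^{-1}M,M^*)$ is a torsion module since $\Omega^{-1}M$ is free on the punctured spectrum as $M$ has rank).

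For (2) $\iff$ (3), I would use the standard isomorphism $\Ext^1_R(M,M) \cong \Tor^R_1(\Omega^{-1}M, M^*)$ valid for MCM modules over a Gorenstein ring. Concretely: $\Ext^1_R(M,M) = \Ext^1_R(\Omega(\Omega^{-1}M), M)$, and for any MCM module $L$ over a Gorenstein ring one has $\Ext^1_R(\Omega L, M) \cong \underline{\Hom}$-type shift relating to $\Tor$; the cleanest route is $\Ext^1_R(M,M) \cong \Ext^1_R(M^{**},M) \cong$ cokernel business, or apply $\Hom_R(-,M)$ to $0\to M \to R^n \to \Omega^{-1}M\to 0$ to get $\Ext^1_R(\Omega^{-1}M, M)$ in the tail, then dualize $M$. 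I expect the main obstacle to be bookkeeping the syzygy/cosyzygy identifications and the duality $(-)^*$ carefully enough to land exactly on $\Tor^R_1(\Omega^{-1}M,M^*)$ rather than some twist of it, and making sure the "generically an isomorphism" arguments (which need $M$ to have a rank) are invoked at the right places; once the four-term exact sequence above is in hand, everything else is a diagram chase plus the torsionfree $\iff$ depth-one dictionary.
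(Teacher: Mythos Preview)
Your argument for (1) $\iff$ (2) is correct and is exactly what the paper does: tensor the cosyzygy sequence $0\to M\to R^n\to\Omega^{-1}M\to 0$ with $M^*$, note that $(M^*)^n$ is torsionfree, and that $\Tor^R_1(\Omega^{-1}M,M^*)$ is torsion because $M$ (hence $M^*$) has rank; thus this $\Tor$ module is precisely the torsion submodule of $M\otimes_RM^*$.

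The gap is in your treatment of (2) $\iff$ (3). You assert a ``standard isomorphism'' $\Ext^1_R(M,M)\cong\Tor^R_1(\Omega^{-1}M,M^*)$, but none of the routes you sketch actually produces it. Applying $\Hom_R(-,M)$ to $0\to M\to R^n\to\Omega^{-1}M\to 0$ gives $\Ext^1_R(\Omega^{-1}M,M)$ in the tail, not $\Ext^1_R(M,M)$; the instruction ``then dualize $M$'' does not turn an $\Ext$ of this shape into the desired $\Tor$ without substantial further input (Auslander--Bridger transpose identities, or a Tate-cohomology argument keeping careful track of shifts). Moreover, even if such an isomorphism holds, establishing it amounts to proving the equivalence (1) $\iff$ (3) directly---and that is precisely the nontrivial step. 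The paper does not attempt this: it handles (1) $\iff$ (3) by invoking \cite[Theorem~5.9]{HuJo}, which in dimension one with $N=M^*$ says exactly that $M\otimes_RM^*$ is MCM if and only if $\Ext^1_R(M,M)=0$. So your proposal correctly isolates where the content lies, but the bridge from $\Tor$ to $\Ext$ that you need is the substance of the cited theorem, and you have not supplied an argument for it.
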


\begin{proof}
For $R$ as in the statement, a finitely generated module is torsionfree if and only if it is MCM. Thus (1) $\iff$ (3) is a special case of \cite[Theorem~5.9]{HuJo}.

(1) $\iff$ (2) By definition of $\Omega^{-1}M$, there is an exact sequence
\[
0\lra M \lra F\lra \Omega^{-1}M\lra 0
\]
where $F$ is finite free. Tensoring with $M^{*}$ yields an exact sequence
\[
0\lra \Tor^{R}_{1}(\Omega^{-1}M,M^{*}) \lra M\otimes_{R}M^{*}\xra{\ f\ } F\otimes_{R}M^{*}
\]
Since $M^{*}$ is torsionfree, so is $F\otimes_{R}M^{*}$, and since $M$ has rank so does $M^{*}$, and hence $\Tor^{R}_{1}(\Omega^{-1}M,M^{*})$ is torsion. It follows that the latter module is zero precisely when $M\otimes_{R}M^{*}$ is torsionfree.
\end{proof}

In view of the preceding result Conjecture~\ref{conj:HW} translates to: Over a Gorenstein local ring of dimension one, any finitely generated rigid module is free.  In the remainder of this section we once again focus on the case of an ideal. The new ingredient will be its endomorphism ring.  Given an ideal $I$ containing a regular element,  when needed its endomorphism ring $\End_{R}(I)$ will be viewed as a subring of the integral closure of $R$; see the remarks at the beginning of Section~\ref{sec:gorenstein}.

The observation below is due to Bass~\cite[Section 7]{Bass}. 

\begin{remark}
\label{rem:ubiquity}
Let $R$ be a Gorenstein ring with $\dim R=1$ and $I$ an ideal containing a regular element. If $\End_{R}(I)=R$, then $I$ is principal. 

Indeed,  we  assume $R$ is not regular, that is to say, $\fm$ is not principal. Then the endomorphism ring $E$ of $\fm$ is equal to $\fm^*$.   Similarly, if $I$ is not principal, then $I^* = \Hom_R(I,\fm)$, which is an $E$-module. Therefore $I = I^{**}$ is an $E$-module too, so $E\subseteq R$. Applying $(-)^*$ to the exact sequence
\[
0\to \fm \to R \to k \to 0
\]
we see that $E/R\cong \Ext^1_R(k,R) \cong k$, so $S$ is a proper extension of $R$.
\end{remark}

The next observation is contained in~\cite[Theorem~6.4]{Lindo}, due to Lindo. A simpler proof is available in our special case.

\begin{proposition}
\label{prop:ubiquity} 
Let $R$ be a Gorenstein local ring with $\dim R=1$ and $I$ an ideal containing a regular element.  If $I$ is rigid and the ring $\End_{R}(I)$ is Gorenstein, then $I$ is principal.
\end{proposition}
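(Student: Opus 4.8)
The plan is to let $S := \End_R(I) = \calH(I,I)$, regarded as a subring of $Q(R)$ containing $R$, and to exploit the hypothesis that $S$ is Gorenstein together with rigidity of $I$. Since $I$ is rigid, Proposition~\ref{prp:tf=rigid} tells us that $I\otimes_R I^*$ is torsionfree, and then Lemma~\ref{lem:nu-trace} identifies $I\otimes_R I^*$ with $\Tr_R(I)$, which by Lemma~\ref{lem:end-trace}(1) is exactly the conductor $\calC_S = S^*$. The first step is therefore to unwind what rigidity says about the $S$-module structure: because $\Tr_R(I) = \calC_S$ is an ideal of $S$, and $I^* \cong I^{-1} = \calH(I,R)$, one checks that $I$, $I^{-1}$ and $\calC_S$ are all $S$-modules, and the canonical map $I\otimes_R I^{-1}\to \calC_S$ factors through $I\otimes_S I^{-1}$.

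Next I would pass the rigidity of $I$ over to $S$. The key point is that $S$ is module-finite over $R$ (it sits inside the integral closure, which is module-finite since $R$ is one-dimensional Gorenstein, hence reduced after localizing appropriately — or one argues directly that $S\subseteq I^{-1}I^{-1}\cdots$ is finitely generated), so $S$ is itself a one-dimensional Gorenstein (indeed the hypothesis) local-or-semilocal ring, and $I$ viewed as an $S$-module is torsionfree with rank. I want to show $I$ is a rigid $S$-module, i.e. $\Ext^1_S(I,I)=0$. For this, the cleanest route is to use that $S$ is Gorenstein of dimension one, so Proposition~\ref{prp:tf=rigid} applies over $S$: it suffices to show $I\otimes_S I^{*_S}$ is torsionfree, where $I^{*_S}=\Hom_S(I,S)$. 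One computes $\Hom_S(I,S) = \calH(I,S)$; since $I$ and $S$ are both $R$-submodules of $Q(R)$ and $S = \calH(I,I)$, a short calculation gives $\calH(I,S) = \calH(I,R)\cdot$(something invertible over $S$) — more precisely I expect $I\cdot\calH(I,S) = S$, so that $I$ is an \emph{invertible} $S$-module, and invertible modules are trivially rigid. Then over the Gorenstein ring $S$ one concludes $I$ is $S$-free, hence $I\cong S$ as $S$-modules, so $\End_R(I) = \End_R(S) = S$ and $I^{-1}I = \calC_S = S^*$; chasing this back, $I$ being $S$-free of rank one forces $I = \alpha S$ for some $\alpha\in Q(R)$, and then $\End_R(I) = \calH(\alpha S,\alpha S) = \calH(S,S) = S$.

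The final step is to deduce that $I$ is principal \emph{over $R$}. Having reached $I\cong S$ as $R$-modules (an isomorphism of $R$-modules, since $I$ is $S$-free of rank one), we have $\End_R(I) = \End_R(S) = S$ by Lemma~\ref{lem:end-trace}(4). Now apply Remark~\ref{rem:ubiquity}: actually that remark shows $\End_R(I)=R$ forces $I$ principal, which is not quite what we have, so instead I would argue that $S\cong I$ is a \emph{cyclic} $S$-module, hence $S = S\cdot 1$; but $I\cong S$ over $R$ means $\nu_R(I) = \nu_R(S)$, and the real input is that a Gorenstein ideal isomorphic to its own endomorphism ring... — here the honest move is: since $I$ is invertible over $S$ and $S$ is Gorenstein local of dimension one, $I = \beta S$ for a unit $\beta$ of $Q(R)$, so $I^{-1}I = S$; but also $I^{-1}I = \calC_S$, giving $\calC_S = S$, i.e. $S = R$; then $\End_R(I)=R$ and Remark~\ref{rem:ubiquity} (or Bass) finishes it.

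The main obstacle I anticipate is the middle step: transferring rigidity cleanly from the $R$-module $I$ to the $S$-module $I$ and correctly identifying $\Hom_S(I,S)$ in terms of the $\calH(-,-)$ notation, in particular verifying that $I$ becomes an invertible $S$-module. The subtlety is that $S$ is only semilocal in general, not local, so ``invertible of rank one $\Rightarrow$ free'' needs $S$ to be a (product of) local Gorenstein ring(s), which is where the Gorenstein hypothesis on $\End_R(I)$ is used essentially; and one must be careful that the rank-one/torsionfree hypotheses of Proposition~\ref{prp:tf=rigid} genuinely hold over $S$.
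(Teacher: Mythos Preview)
Your high-level strategy---pass to $S=\End_R(I)$, show $I\cong S$ as $S$-modules, then deduce $I$ is principal over $R$---matches the paper's. But both of your key steps have gaps, and the paper fills them differently.

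\textbf{Getting $I\cong S$.} You try to establish this by showing $I$ is an invertible $S$-module, writing ``I expect $I\cdot\calH(I,S)=S$''. That expectation is never justified, and you correctly flag it as the main obstacle. The paper bypasses this entirely: since any $S$-linear endomorphism of $I$ is $R$-linear, one has $\End_S(I)=\End_R(I)=S$. Now apply Remark~\ref{rem:ubiquity} to the one-dimensional Gorenstein ring $S$ (at each of its finitely many localizations, then use the Chinese Remainder Theorem): the ideal $I\subseteq S$ satisfies $\End_S(I)=S$, hence $I$ is a principal $S$-ideal. No rigidity is needed for this step.

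\textbf{Descending to $R$.} Here you have an actual error. You write ``$I=\beta S$ \dots\ so $I^{-1}I=S$; but also $I^{-1}I=\calC_S$, giving $\calC_S=S$''. The second equality is fine: $I^{-1}=\calH(I,R)=\beta^{-1}\calC_S$, so $I^{-1}I=\calC_S$. But the first equality is wrong: invertibility of $I$ over $S$ says $\calH(I,S)\cdot I=S$, not $\calH(I,R)\cdot I=S$. You have conflated the $R$-inverse with the $S$-inverse, and the conclusion $S=R$ does not follow from your argument.

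The paper's route from $I\cong S$ to $\nu_R(I)=1$ is where rigidity actually enters. Rigidity makes $I\otimes_R I^*$ torsionfree, so the natural surjection $\pi\colon I\otimes_R I^*\twoheadrightarrow I\otimes_S I^*$ is injective (the trace map $I\otimes_R I^*\to Q(R)$ factors through $\pi$ and is injective on the torsionfree source). Since $I\cong S$ over $S$, one gets $I\otimes_S I^*\cong I^*$, whence
\[
\nu_R(I)\,\nu_R(I^*)=\nu_R(I\otimes_R I^*)=\nu_R(I\otimes_S I^*)=\nu_R(I^*),
\]
and cancelling $\nu_R(I^*)$ gives $\nu_R(I)=1$.
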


\begin{proof} 
Set $S:=\End_{R}(I)$; this is commutative, for it can be viewed as a subring of the integral closure of $R$.  Remark~\ref{rem:ubiquity} implies that $I$ is a principal ideal of $S$, whence $I\cong S$: Even if $S$ is not local, we simply apply the remark to each local ring of $S$, and then use the Chinese Remainder Theorem.

The canonical surjection $\pi\colon I\otimes_RI^* \twoheadrightarrow I\otimes_SI^*$ is bijective,  because the  injection $I\otimes_RI^* \to Q(R)\otimes_RI\otimes_RI^*$ factors through $\pi$.  It follows that 
\[
\nu_R(I)\nu_R(I^*) = \nu_R(I\otimes_SI^*) = \nu_R(S\otimes_SI^*) = \nu_R(I^*)\,.
\]
Canceling the factor $\nu_R(I^*)$, we get the equation $\nu_R(I) = 1$.
\end{proof}

\begin{remark}
It follows from Proposition~\ref{prop:ubiquity} that if  all rings between $R$ and its integral closure are Gorenstein then there are no non-principal rigid ideals---but much more is known in
this case; see \cite{Bass}.
\end{remark}

\begin{remark}
Let $I$ be an ideal in $R$ that contains a regular element and is rigid; set $S:=\End_{R}(I)$. Applying $\Hom_{R}(-,R)$ and $\Hom_{R}(I,-)$, respectively, to the exact sequence $0\to I\to R\to R/I\to 0$, induces the rows of the following commutative diagram of $R$-modules:

\begin{equation}
\label{eq:CD}
\xymatrix{ 
0 \ar@{->}[r] & R \ar@{->}[r] \ar@{->}[d] & I^{*} \ar@{->}[r] \ar@{=}[d] 
		& \Ext^{1}_{R}(R/I,R) \ar@{->}[r] \ar@{->}[d]^{\gamma} & 0 \\
0 \ar@{->}[r] & S \ar@{->}[r] &  I^{*} \ar@{->}[r]
		 & \Hom_{R}(I,R/I) \ar@{->}[r] & \Ext^1_R(I,I)=0  }
\end{equation}
The vertical map on the left is the canonical homothety; it is one-to-one because $I$ contains a regular element. The map $\gamma$ is induced by the other two vertical arrows. 

The Snake Lemma applied to the diagram above implies that $\gamma$ is surjective and identifies $\ker\gamma$ with $S/R$. One thus gets an exact sequence
\begin{equation}
\label{eq:exact1}
0\lra S/R  \lra \Ext^{1}_{R}(R/I,R) \lra \Hom_{R/I}(I/I^{2},R/I)\lra 0\,.
\end{equation}
\end{remark}

Suppose now that $(R,\fm,k)$ is one dimensional, Gorenstein.  The canonical module of $R/I$ is given by
\begin{equation}
\label{eq:can}
\omega_{R/I} = \Ext^1_R(R/I,R)\,;
\end{equation}
see, for example, \cite[Theorem 3.3.7]{Bruns/Herzog:1993}. Moreover, we have
\begin{equation}
\label{eq:rhom}
\begin{aligned}
\Hom_{R/I}(I/I^2\otimes_{R/I}\omega_{R/I},\omega_{R/I})
	& =\Hom_{R/I}(I/I^2,\Hom_{R/I}(\omega_{R/I},\omega_{R/I})) \\
	& = \Hom_{R/I}(I/I^2,R/I)\,.
\end{aligned}
\end{equation}
Therefore the exact sequence \eqref{eq:exact1} becomes
\begin{equation}
\label{eq:3term1}
0\lra S/R \lra \omega_{R/I} \lra \dual{((I/I^2)\otimes_{R/I}\omega_{R/I})} \lra 0 \,,
\end{equation}
where $\dual{(-)}$ denotes the $\omega_{R/I}$-dual $\Hom_{R/I}(-,\omega_{R/I})$.

Denote the length of an $R$-module $M$ by $\lambda_R(M)$.

\begin{proposition}
\label{prop:Ext-length} 
Let $R$ be a Gorenstein local ring with $\dim R=1$ and $I$ an ideal containing a regular element.  
We have the following equality:
\begin{equation}
\label{eq:Ext-length}
\lambda_R(\Ext_R^1(I,I)) + \lambda_R(R/I) = \lambda_R(S/R)
+\lambda_R((I/I^2)\otimes_{R/I} \omega_{R/I})\,.
\end{equation}
In particular, the ideal $I$ is rigid if and only if
\begin{align*}
\lambda_R(R/I) - \lambda_R(S/R) & = \lambda_R((I/I^2)\otimes_{R/I}
\omega_{R/I})\, 
\intertext{if and only if}
\lambda\left((\Tr_RI)/I\right) &= \lambda_R\big((I/I^2)\otimes_R\omega_{R/I}\big) \,.
\end{align*}
\end{proposition}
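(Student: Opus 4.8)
The plan is to obtain two short exact sequences of finite‑length modules by dualizing $0\to I\to R\to R/I\to 0$ in the two obvious ways, add up the resulting length relations, and then translate the answer using duality over the Artinian local ring $R/I$. First note that $I$ is a nonzero finitely generated torsionfree $R$‑module, hence MCM, and so is $S:=\End_R(I)=\Hom_R(I,I)$, being a finitely generated $R$‑submodule of $Q(R)$; since $R$ is Gorenstein with $\dim R=1$ this yields $\Ext^1_R(I,R)=0=\Ext^1_R(S,R)$. Applying $\Hom_R(-,R)$ to $0\to I\to R\to R/I\to 0$, and using $\Hom_R(R/I,R)=0$ and \eqref{eq:can}, gives the ``top row'' $0\to R\to I^{*}\to\omega_{R/I}\to 0$; identifying $I^{*}$ with $I^{-1}\subseteq Q(R)$ this says $\omega_{R/I}\cong I^{-1}/R$. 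Since $R\subseteq S\subseteq I^{-1}$ as $R$‑submodules of $Q(R)$, the third isomorphism theorem produces
\[
0\lra S/R\lra \omega_{R/I}\lra I^{-1}/S\lra 0\,.
\]
Applying $\Hom_R(I,-)$ to the same sequence, and using $\Hom_R(I,I)=S$, $\Hom_R(I,R)=I^{*}\cong I^{-1}$, and $\Ext^1_R(I,R)=0$, gives the ``bottom row''
\[
0\lra I^{-1}/S\lra \Hom_R(I,R/I)\lra \Ext^1_R(I,I)\lra 0\,.
\]
These are the versions of the diagram \eqref{eq:CD} and the sequence \eqref{eq:exact1} that survive when $I$ is not assumed rigid.

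\textbf{Main equality and first equivalence.} All modules above are killed by a power of $\fm$, hence have finite length. Adding the length relations coming from the two displayed sequences and cancelling $\lambda_R(I^{-1}/S)$ gives
\[
\lambda_R(\Ext^1_R(I,I))+\lambda_R(\omega_{R/I})=\lambda_R(S/R)+\lambda_R(\Hom_R(I,R/I))\,.
\]
Now $R/I$ is Artinian local with canonical module $\omega_{R/I}$, so Matlis duality over $R/I$ gives $\lambda_R(\omega_{R/I})=\lambda_R(R/I)$; and by \eqref{eq:rhom} the module $\Hom_R(I,R/I)=\Hom_{R/I}(I/I^2,R/I)$ is the $\omega_{R/I}$‑dual of $(I/I^2)\otimes_{R/I}\omega_{R/I}$, hence has the same length as $(I/I^2)\otimes_{R/I}\omega_{R/I}=(I/I^2)\otimes_R\omega_{R/I}$. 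Substituting these two equalities yields \eqref{eq:Ext-length}. The first ``if and only if'' follows at once, since $I$ is rigid precisely when $\lambda_R(\Ext^1_R(I,I))=0$.

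\textbf{Second equivalence.} It remains to match $\lambda_R(R/I)-\lambda_R(S/R)$ with $\lambda_R((\Tr_RI)/I)$. Dualize $0\to R\to S\to S/R\to 0$ with $\Hom_R(-,R)$: using $\Hom_R(S/R,R)=0$ and $\Ext^1_R(S,R)=0$ one gets $0\to S^{*}\to R\to\Ext^1_R(S/R,R)\to 0$, and since $S/R$ has finite length, $\Ext^1_R(S/R,R)$ is its Matlis dual over the one‑dimensional Gorenstein ring $R$, so $\lambda_R(R/S^{*})=\lambda_R(S/R)$. By Lemma~\ref{lem:end-trace}(1), $S^{*}=\calC_S=\Tr_R(I)$, and $I\subseteq\Tr_R(I)\subseteq R$ because the inclusion $I\hookrightarrow R$ is one of the elements of $I^{*}$. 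Hence $\lambda_R(R/I)-\lambda_R(S/R)=\lambda_R(R/I)-\lambda_R(R/\Tr_R(I))=\lambda_R((\Tr_RI)/I)$, which, combined with the previous paragraph and $(I/I^2)\otimes_{R/I}\omega_{R/I}=(I/I^2)\otimes_R\omega_{R/I}$, gives the last displayed equivalence.

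\textbf{Main obstacle.} Each individual step is routine, and the only thing demanding care is using duality correctly at two different levels: Matlis (equivalently local) duality over $R$ for finite‑length modules, which is what lets us replace $\lambda_R(S/R)$ by $\lambda_R(R/\Tr_R(I))$ and $\lambda_R(\omega_{R/I})$ by $\lambda_R(R/I)$, versus $\omega_{R/I}$‑duality over the Artinian ring $R/I$, which is what identifies $\lambda_R(\Hom_{R/I}(I/I^2,R/I))$ with $\lambda_R((I/I^2)\otimes_{R/I}\omega_{R/I})$ via \eqref{eq:rhom}. Along with this one must not forget to verify the vanishings $\Ext^1_R(I,R)=0=\Ext^1_R(S,R)$ that make the two four‑term sequences exact on the right; these follow from $I$ and $S$ being maximal Cohen--Macaulay over the Gorenstein ring $R$.
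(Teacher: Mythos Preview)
Your proof is correct and follows essentially the same route as the paper: both arguments apply $\Hom_R(-,R)$ and $\Hom_R(I,-)$ to the sequence $0\to I\to R\to R/I\to 0$, use $\Ext^1_R(I,R)=0$ from the MCM property, and invoke Matlis duality over $R/I$ via \eqref{eq:rhom} to translate $\Hom_{R/I}(I/I^2,R/I)$ into $(I/I^2)\otimes_{R/I}\omega_{R/I}$; the only cosmetic difference is that the paper records the single four-term sequence \eqref{eq:exact} while you split it at the intermediate module $I^{-1}/S$. Your write-up is in fact slightly more complete, since you spell out the final equivalence involving $\Tr_R(I)$ by dualizing $0\to R\to S\to S/R\to 0$ and invoking Lemma~\ref{lem:end-trace}(1), a step the paper leaves implicit.
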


\begin{proof}
As $R$ is Gorenstein and $I$ contains a regular element $\Ext^{1}_{R}(I,R)=0$ holds. Thus, applying  $\Hom_R(I,-)$  to the exact sequence $0\to I \to R\to R/I\to 0$ yields an exact sequence
\[
0\to S \to I^* \to \Hom_{R/I}(I/I^2,R/I)\to \Ext^1_R(I,I)\to 0\,.
\]
Given \eqref{eq:rhom} this translates to an exact sequence
\begin{equation}
\label{eq:exact}
0\to S/R \to I^*/R \to \dual{((I/I^2)\otimes_{R/I}\omega_{R/I})} \to
\Ext^1_R(I,I) \to 0\,.
\end{equation}
The top row of the commutative diagram \eqref{eq:CD} shows  that $I^*/R\cong \omega_{R/I}$.  Thus
\[
\lambda_R(I^*/R) = \lambda_R(R/I)\,.
\]
Also, by duality, we have
\[
\lambda_R(\dual{((I/I^2)\otimes_{R/I}\omega)}) = \lambda_R
((I/I^2)\otimes_{R/I}\omega)\,.
\]
Using these equalities and counting lengths in \eqref{eq:exact} yields \eqref{eq:Ext-length}.
\end{proof}

\begin{corollary}
\label{cor:funny}
Let $R$ be a Gorenstein local ring $R$ with $\dim R=1$ and $I=(a,b)$ an ideal such that $a$ and $b$ are regular elements.  Then $I$ is rigid if and only if  
\[
((a):b)\cap ((b):a) = ((a):b)((b):a)\,.
\]
\end{corollary}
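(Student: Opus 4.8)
The plan is to use Proposition~\ref{prp:tf=rigid} in the form ``$I$ is rigid if and only if $I\otimes_{R}I^{*}$ is torsionfree'', and then to compute the torsion submodule of $I\otimes_{R}I^{*}$ explicitly. Write $I^{*}=I^{-1}$ and set $J_{1}:=(a):b$ and $J_{2}:=(b):a$. Two elementary observations will be used throughout. Since $\alpha\in Q(R)$ lies in $I^{-1}$ precisely when $\alpha a\in R$ and $\alpha b\in R$, one has $aI^{-1}=J_{1}$ and $bI^{-1}=J_{2}$ as ideals of $R$, and hence $(aI^{-1})(bI^{-1})=ab\cdot I^{-1}I^{-1}$, where juxtaposition denotes the product of fractional ideals inside $Q(R)$. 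Moreover $J_{1}J_{2}\subseteq J_{1}\cap J_{2}$, because for $x\in J_{1}$ and $y\in J_{2}$ one has $xyb=y(xb)\in(a)$ and $xya=x(ya)\in(b)$.

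First I would recall, from the proof of Proposition~\ref{prp:2gen-refl}, the exact sequence
\[
0\lra I^{-1}\xra{\ \varepsilon\ }R^{2}\xra{\ [a\ b]\ }I\lra 0,\qquad \varepsilon(\alpha)=(\alpha b,-\alpha a),
\]
whose exactness needs only that $I=(a,b)$ with $a,b$ regular. Tensoring with $I^{-1}$ and using that $R^{2}$ is free gives a presentation
\[
I^{-1}\otimes_{R}I^{-1}\xra{\ \varepsilon\otimes 1\ }I^{-1}\oplus I^{-1}\xra{\ [a\ b]\otimes 1\ }I\otimes_{R}I^{-1}\lra 0,
\]
in which $(\varepsilon\otimes 1)(\alpha\otimes\beta)=(b\alpha\beta,-a\alpha\beta)$. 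Thus $I\otimes_{R}I^{-1}\cong(I^{-1}\oplus I^{-1})/W$ with $W=\{(b\delta,-a\delta)\mid\delta\in I^{-1}I^{-1}\}$.

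The heart of the matter is to identify $\boldsymbol\top(I\otimes_{R}I^{-1})$. The multiplication map $\mu\colon I\otimes_{R}I^{-1}\to\Tr_{R}I=I\,I^{-1}$ is surjective and becomes an isomorphism after localizing at any associated prime of $R$, where $I$ and $I^{-1}$ are free of rank one (because $a$ becomes a unit); since $\Tr_{R}I$ is a fractional ideal, hence torsionfree, $\ker\mu$ is exactly $\boldsymbol\top(I\otimes_{R}I^{-1})$. As $\mu\circ([a\ b]\otimes 1)$ is the map $(q_{1},q_{2})\mapsto aq_{1}+bq_{2}$, pulling $\ker\mu$ back through the presentation gives
\[
\boldsymbol\top(I\otimes_{R}I^{-1})\;=\;\{(q_{1},q_{2})\in I^{-1}\oplus I^{-1}\mid aq_{1}+bq_{2}=0\}\big/W.
\]
Now $(q_{1},q_{2})\mapsto aq_{1}$ carries the numerator isomorphically onto $aI^{-1}\cap bI^{-1}=J_{1}\cap J_{2}$: it is injective since $a$ is regular, and surjective since any $w\in J_{1}\cap J_{2}$ produces $(w/a,-w/b)\in I^{-1}\oplus I^{-1}$ with $a(w/a)+b(-w/b)=0$. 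Under this identification $W$ corresponds to $ab\cdot I^{-1}I^{-1}=(aI^{-1})(bI^{-1})=J_{1}J_{2}$, so $\boldsymbol\top(I\otimes_{R}I^{-1})\cong(J_{1}\cap J_{2})/J_{1}J_{2}$.

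Putting this together: $I\otimes_{R}I^{*}=I\otimes_{R}I^{-1}$ is torsionfree if and only if $(J_{1}\cap J_{2})/J_{1}J_{2}=0$, which, given $J_{1}J_{2}\subseteq J_{1}\cap J_{2}$, means exactly $((a):b)\cap((b):a)=((a):b)((b):a)$; by Proposition~\ref{prp:tf=rigid} this is equivalent to rigidity of $I$. The step demanding real care, as opposed to bookkeeping, is the identification of $\boldsymbol\top(I\otimes_{R}I^{-1})$ in the third paragraph: one must transport both the kernel $\{aq_{1}+bq_{2}=0\}$ and the submodule $W$ correctly through the fractional-ideal identities $aI^{-1}=J_{1}$, $bI^{-1}=J_{2}$, $(aI^{-1})(bI^{-1})=ab\cdot I^{-1}I^{-1}$; everything else is routine.
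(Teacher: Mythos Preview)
Your proof is correct, but it takes a genuinely different route from the paper's. The paper applies the length criterion of Proposition~\ref{prop:Ext-length} to the linked ideal $J=\big((a):_Rb\big)$: since $I$ and $J$ are linked via $(a)$ one has $J\cong I^{*}$ (so $I$ is rigid iff $J$ is rigid), and since $R/J$ is Gorenstein the criterion becomes $\lambda\big((\Tr_RJ)/J\big)=\lambda(J/J^{2})$. The proof then unwinds both sides through the identities $\Tr_RJ=\big((a):b\big)+\big((b):a\big)$ and $bJ=a\big((b):a\big)$, matching lengths step by step until the stated equality of ideals drops out.

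By contrast, you bypass both Proposition~\ref{prop:Ext-length} and linkage entirely: you tensor the syzygy sequence $0\to I^{-1}\to R^{2}\to I\to 0$ with $I^{-1}$ and read off the torsion of $I\otimes_{R}I^{-1}$ directly as $(J_{1}\cap J_{2})/J_{1}J_{2}$, then invoke Proposition~\ref{prp:tf=rigid}. Your argument is shorter, more self-contained, and yields the explicit isomorphism $\boldsymbol\top(I\otimes_{R}I^{*})\cong\big(((a):b)\cap((b):a)\big)\big/\big(((a):b)((b):a)\big)$, which is slightly sharper than the bare equivalence. The paper's route, on the other hand, illustrates how the twisted--conormal length formula and linkage interact in the simplest nontrivial case, which fits the narrative of Sections~\ref{sec:rigidity}--\ref{sec:smoothable}.
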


\begin{proof}
\pushQED{\qed}
Set $J:=((a):b)$ so that $I$ and $J$ are linked via  $(a)$. One checks that
\[
bJ = b\big((a):_Rb\big) = (a) \cap (b) = a\big((b):a\big)\,.
\]
Therefore
\[
\Tr_RJ = \Tr_RI = (a,b)I^{-1} = a\frac{1}{a}J + b\frac{1}{a}J = \big((a):_Rb \big)+ \big((b):_Ra\big)\,.
\]
Now, by Proposition \ref{prop:Ext-length} $J$ is rigid if and only if
\[
\lambda_R (\frac{\Tr_RJ}{J}) = \lambda_R(\frac{J}{J^2})\,,
\]
if and only if
\[
\lambda_R(\frac{\big((a):_Rb\big)+\big((b):_Ra\big)}{\big((a):_Rb\big)}) = 
\lambda_R(\frac{\big((a):_Rb\big)}{\big((a):_Rb\big)^2})\,,
\]
if and only if
\[
\lambda_R(\frac{\big((b):_Ra\big)}{\big((a):_Rb\big)\cap\big((b):_Ra\big)})=
\lambda_R(\frac{b\big((a):_Rb\big)}{b\big((a):_Rb\big)^2})\,,
\]
if and only if
\[
\lambda_R(\frac{\big((b):_Ra\big)}{\big((a):_Rb\big)\cap\big((b):_Ra\big)})
=\lambda_R(\frac{a\big((b):_Ra\big)}{a\big((b):_Ra\big)\big((a):_Rb\big)})\,,
\]
if and only if 
\[
\lambda_R(\frac{\big((b):_Ra\big)}{\big((a):_Rb\big)\cap\big((b):_Ra\big)})
=\lambda_R(\frac{\big((b):_Ra\big)}{\big((b):_Ra\big)\big((a):_Rb\big)})\,,
\]
if and only if 
\[
\big((a):_Rb\big)\cap\big((b):_Ra\big) = \big((a):_Rb\big)\big((b):_Ra\big)\,.\qedhere
\]
\end{proof}

\begin{remark} 
Corollary \ref{cor:funny} is rather remarkable, for it shows that the existence of a two-generated rigid ideal is \emph{equivalent} to the existence of  isomorphic ideals $I$ and $J$ such that $R/I$ and $R/J$ are Gorenstein and $\Tor_1^R(R/I,R/J) = 0$.  It seems  unlikely that such ideals exist.
\end{remark}

The preceding results highlight the importance of studying endomorphism rings of rigid ideals.  The result below, though not strictly necessary in the sequel, anticipates the key role linkage plays in the next section.

\begin{lemma}
\label{lem:link-endo}  
Let $I$ and $J$ be ideals of a Noetherian ring $S$. Assume each contains a regular element,  and identify their endomorphism
rings $A$ and $B$, respectively, with subrings of the integral closure of $S$. If $I$ and $J$ are linked, then $A= B$.
\end{lemma}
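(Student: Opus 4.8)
The plan is to prove the inclusion $A\subseteq B$; since the condition that $I$ and $J$ be linked is visibly symmetric in $I$ and $J$, the reverse inclusion then follows by the identical argument, giving $A=B$. Fix a regular sequence $\ul x=x_{1},\dots,x_{g}$ witnessing the link and set $\fq=(\ul x)$, so that $\fq\subseteq I\cap J$, $J=(\fq:_{S}I)$, and $I=(\fq:_{S}J)$. Since $I$ contains a regular element I would identify $A=\End_{S}(I)$ with the subring $\{\alpha\in Q(S)\mid\alpha I\subseteq I\}$ of $Q(S)$, exactly as in the statement, and similarly for $B$. The soft part of the argument is then immediate: given $\alpha\in A$ and $s\in J$, one has
\[
(\alpha s)I = s(\alpha I)\subseteq sI\subseteq\fq,
\]
the last containment because $s\in J=(\fq:_{S}I)$; in particular $(\alpha s)\fq\subseteq(\alpha s)I\subseteq\fq$ as well, using $\fq\subseteq I$.

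The substantive step is to deduce from $(\alpha s)\fq\subseteq\fq$ that $\alpha s\in S$. Granting this, $\alpha s\in S$ together with $(\alpha s)I\subseteq\fq$ yields $\alpha s\in(\fq:_{S}I)=J$; since $s\in J$ was arbitrary this gives $\alpha J\subseteq J$, i.e.\ $\alpha\in B$, completing the inclusion $A\subseteq B$. Thus the whole lemma reduces to the assertion $\End_{S}(\fq)=S$ for an ideal $\fq$ generated by a regular sequence. I would prove this by cases on $g$: if $g=1$ then $\fq$ is generated by a single regular element, hence is invertible and $\End_{S}(\fq)=S$; if $g\ge 2$ then $\fq$ has grade $\ge 2$, so $\Hom_{S}(S/\fq,S)=0=\Ext^{1}_{S}(S/\fq,S)$, and applying $\Hom_{S}(-,S)$ to $0\lra\fq\lra S\lra S/\fq\lra 0$ shows that the restriction map $S=\Hom_{S}(S,S)\to\Hom_{S}(\fq,S)$ is an isomorphism. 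Under the identification of $\Hom_{S}(\fq,S)$ with the fractional ideal $\fq^{-1}\subseteq Q(S)$ this says $\fq^{-1}=S$, whence $\End_{S}(\fq)\subseteq\fq^{-1}=S$, and the reverse inclusion is trivial.

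I expect $\End_{S}(\fq)=S$ to be the only non-formal ingredient, and it is genuinely needed: for a general subideal $\mathfrak a\subseteq I$ the set $\{\gamma\in Q(S)\mid\gamma I\subseteq\mathfrak a\}$ can be strictly larger than $(\mathfrak a:_{S}I)$, so without the complete-intersection hypothesis the computation above would only place $\alpha s$ in that larger colon and not in $J$. It is precisely the regular-sequence structure of $\fq$ that forces $\End_{S}(\fq)=S$ and hence collapses the two colons. Everything else is a short manipulation of colon ideals inside $Q(S)$, so I anticipate no further difficulty.
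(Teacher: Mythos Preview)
Your argument is correct and follows the same path as the paper's: pick $\alpha\in A$ and $s\in J$, show $(\alpha s)I\subseteq(\ul{x})$, deduce $\alpha s\in J$, and invoke symmetry. The paper writes the implication $(\alpha s)I\subseteq(\ul{x})\Rightarrow\alpha s\in((\ul{x}):_{S}I)=J$ without pausing to check that $\alpha s$ actually lies in $S$; your auxiliary step $\End_{S}(\fq)=S$ for $\fq=(\ul{x})$ supplies precisely this justification, so your write-up is more complete than the original.
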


\begin{proof}  
We have $J = ((\ul{x}:_SI)$ and $I= ((\ul{x}:_SJ)$, where $\ul{x}$ is a regular sequence.  Suppose $t\in A$, that is, $t$ is in the integral closure of $S$ and $tI\subseteq I$.  If $z\in J$, we have $tzI = z(tI) \subseteq zI \subseteq (\ul{x})$. This shows that $tz \in ((\ul{x}):_RI) = J$; thus $t\in B$.  We have shown that $A\subseteq B$.  By symmetry, the two endomorphism rings coincide.
\end{proof}

\section{Smoothable ideals}
\label{sec:smoothable}
From now on the focus will be on complete intersection rings. In this section we verify Conjecture~\ref{conj:HW} for ideals  that deform to  smoothable ideals. To this end we recollect some basic properties of  twisted conormal modules.

\subsection{The twisted conormal module}
\label{ssec:conormal}
Consider a pair $(Q,J)$ where $Q$ is a Gorenstein local ring and $J\subset Q$ an ideal such that $Q/J$ is Cohen-Macaulay. The hypotheses on $Q$ and $J$ imply that $\Ext^{d}_{Q}(Q/J,Q)$,  where $d=\height I$,  is the canonical module of $Q/J$. Motivated by Proposition~\ref{prop:Ext-length}, we consider the $Q/J$-module
\[
\con QJ :=  (J/J^{2})\otimes_{Q/J} \omega_{Q/J}\,,
\]
which Buchweitz~\cite{Buchweitz:1981} calls the \emph{twisted conormal module}. We are particularly interested in the situation when $Q/J$ has finite length, and satisfies an inequality
\begin{equation}
\label{eq:conj-conormal}
\lambda_{Q}(\con QJ) \geqslant (\dim Q)\lambda_{Q}(Q/J)\,.
\end{equation}
In this case $\omega_{Q/J}$ is the injective hull of $k$ as a module over $Q/J$. Thus, by Matlis duality, the preceding inequality is equivalent to 
\[
\lambda_{Q}(\Hom_{Q/J}(J/J^{2},Q/J)) \geqslant (\dim Q)\lambda_{Q}(Q/J)\,.
\]
While such an inequality does not hold in general---see Example~\ref{ex:EU}---the main agenda in this section is to identify various classes of ideals for which it does. We begin with the following simple observation.

\begin{remark}
\label{rem:conj-ci}
In the context of \eqref{eq:conj-conormal}, since $Q/J$ has finite length one has 
\[
\nu_{Q/J}(J/J^{2}) = \nu_{Q}(J)\ge \dim Q
\]
Thus when $J$ is generated by a regular sequence, the $Q/J$-module $J/J^{2}$, and hence also its dual, is free and so \eqref{eq:conj-conormal} holds; in fact, we get an equality.  This observation can be extended to cover the case when $Q$ is regular and $J$ can be deformed to a ideal that is generically complete intersection; see Proposition~\ref{prop:conj-deforms}.  
\end{remark}

Our interest in \eqref{eq:conj-conormal} is explained by the next result.

\begin{corollary}
\label{cor:rigid-length} 
Let $R$ be a Gorenstein local ring with $\dim R=1$ and $I$ a rigid ideal containing a regular element. If $\con RI$  satisfies the inequality in \eqref{eq:conj-conormal}, then $I$ is principal. 
\end{corollary}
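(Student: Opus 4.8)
The plan is to apply the rigidity criterion of Proposition~\ref{prop:Ext-length} directly, using the hypothesis on the twisted conormal module to force $I$ to be principal. Recall from Proposition~\ref{prop:Ext-length} that, writing $S := \End_R(I) = \calH(I,I)$, the ideal $I$ being rigid is equivalent to the equality
\[
\lambda_R(R/I) - \lambda_R(S/R) = \lambda_R\big((I/I^2)\otimes_{R/I}\omega_{R/I}\big) = \lambda_R(\con RI)\,.
\]
So first I would invoke rigidity of $I$ to get this identity, and then use the assumed inequality \eqref{eq:conj-conormal}, which in our setting ($\dim R = 1$) reads $\lambda_R(\con RI) \ge \lambda_R(R/I)$. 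Combining the two gives
\[
\lambda_R(R/I) - \lambda_R(S/R) = \lambda_R(\con RI) \ge \lambda_R(R/I)\,,
\]
hence $\lambda_R(S/R) \le 0$, forcing $\lambda_R(S/R) = 0$, i.e. $S = R$.

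Once $S = \End_R(I) = R$, I would cite Remark~\ref{rem:ubiquity} (the observation of Bass), which says precisely that a Gorenstein one-dimensional local ring $R$ with an ideal $I$ containing a regular element and satisfying $\End_R(I) = R$ must have $I$ principal. That closes the argument. I should double-check one small point: the inequality \eqref{eq:conj-conormal} was stated for a pair $(Q,J)$ with $Q/J$ of finite length; here $Q = R$ with $\dim R = 1$, and $R/I$ has finite length because $I$ contains a regular element, so $(\dim Q)\lambda_Q(Q/J) = \lambda_R(R/I)$, consistent with the chain of inequalities above. I would also note that $\con RI$ makes sense since $R/I$ is automatically Cohen-Macaulay (it is Artinian), so the canonical module $\omega_{R/I} = \Ext^1_R(R/I,R)$ is available as in \eqref{eq:can}.

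There is essentially no obstacle here — the corollary is a formal consequence of two earlier results once one observes that the numerical criterion and the conormal inequality point in opposite directions and hence pin down $\lambda_R(S/R) = 0$. The only thing requiring a moment's care is matching conventions: confirming that the $\omega_{R/I}$-dual appearing in Proposition~\ref{prop:Ext-length} is length-preserving (Matlis duality over the Artinian Gorenstein quotient, already used in the proof of that proposition) so that $\lambda_R(\con RI)$ is the quantity actually constrained by \eqref{eq:conj-conormal}. I expect the proof to be three or four lines.
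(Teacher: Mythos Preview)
Your proposal is correct and matches the paper's own proof essentially line for line: combine the rigidity identity from Proposition~\ref{prop:Ext-length} with the inequality \eqref{eq:conj-conormal} to force $\End_R(I)=R$, then invoke Remark~\ref{rem:ubiquity}. The paper states this in a single sentence, and your side remarks (finite length of $R/I$, Matlis duality) are the right sanity checks.
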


\begin{proof}
If $I$ is rigid and $\con RI$  satisfies the inequality in \eqref{eq:conj-conormal}, it follows from Proposition~\ref{prop:Ext-length} that $R=\End_{R}(I)$, and so Remark~\ref{rem:ubiquity} yields that $I$ is principal.
\end{proof}

A special case of this result seems worth highlighting. 

\begin{corollary}
Let $R$ be a Gorenstein local ring with $\dim R=1$ and $I$ a rigid ideal, containing a regular element. If the $R/I$-module $\Hom_{R/I}(I/I^{2},R/I)$ contains a free summand, then $I$ is principal.
\end{corollary}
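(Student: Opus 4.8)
The plan is to deduce this from Corollary~\ref{cor:rigid-length} by checking that the hypothesis ``$\Hom_{R/I}(I/I^{2},R/I)$ contains a free summand'' forces the twisted conormal module $\con RI$ to satisfy the inequality in \eqref{eq:conj-conormal}. Recall that $R/I$ has finite length (since $I$ contains a regular element and $\dim R=1$) and is Gorenstein-related to $\omega_{R/I}$ via Matlis duality; indeed, as noted in Section~\ref{ssec:conormal}, $\omega_{R/I}$ is the injective hull of $k$ over $R/I$, so Matlis duality over $R/I$ preserves length and swaps $\con RI=(I/I^2)\otimes_{R/I}\omega_{R/I}$ with $\Hom_{R/I}(I/I^2,R/I)$. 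Hence it is equivalent to show $\lambda_R\big(\Hom_{R/I}(I/I^2,R/I)\big)\ge \lambda_R(R/I)$, since $\dim R=1$.

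First I would record that $\lambda_R\big(\Hom_{R/I}(I/I^2,R/I)\big)\ge \lambda_{R}(R/I)$ whenever this $\Hom$ module has a free $R/I$-summand: a free summand $R/I$ already contributes length $\lambda_R(R/I)$, and the remaining summand contributes a nonnegative length, so the total is at least $\lambda_R(R/I)$. By the Matlis duality remark above, this is exactly the inequality \eqref{eq:conj-conormal} for $(Q,J)=(R,I)$, i.e.\ $\lambda_R(\con RI)\ge (\dim R)\lambda_R(R/I)$. Then Corollary~\ref{cor:rigid-length} applies directly: $I$ is rigid and $\con RI$ satisfies \eqref{eq:conj-conormal}, so $I$ is principal.

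The only genuinely substantive point is the translation between $\Hom_{R/I}(I/I^2,R/I)$ and $\con RI$, and that $\omega_{R/I}$ is indeed the injective hull of $k$ over $R/I$ --- but this is already spelled out in Section~\ref{ssec:conormal}, since $R/I$ has finite length and $R$ is one-dimensional Gorenstein, so the hypotheses there are met with $Q=R$. There is no real obstacle: the corollary is essentially a repackaging of Corollary~\ref{cor:rigid-length} under a more concrete sufficient condition, and the proof is a two-line length estimate followed by an invocation of the previous corollary. I would write it as: a free summand of $\Hom_{R/I}(I/I^2,R/I)$ yields $\lambda_R\big(\Hom_{R/I}(I/I^2,R/I)\big)\ge \lambda_R(R/I)$, which by Matlis duality is the inequality in \eqref{eq:conj-conormal} for $\con RI$; now apply Corollary~\ref{cor:rigid-length}.
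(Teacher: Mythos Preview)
Your proposal is correct and follows essentially the same approach as the paper: use Matlis duality to get $\lambda_R(\con RI)=\lambda_R\Hom_{R/I}(I/I^2,R/I)$, note that a free $R/I$-summand forces this to be at least $\lambda_R(R/I)=(\dim R)\lambda_R(R/I)$, and invoke Corollary~\ref{cor:rigid-length}. The paper's proof is the same two-line argument, just stated more tersely.
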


\begin{proof}
The hypothesis implies the inequality below
\[
\lambda_{R}\con RI = \lambda_{R}\Hom_{R/I}(I/I^{2},R/I)\ge \lambda_{R}(R/I) 
\]
the equality is by Matlis duality. It remains to recall Corollary~\ref{cor:rigid-length}.
\end{proof}

\subsection{Linkage}
\label{ch:linkage}
Ideals $I,J$ in a Gorenstein ring are said to be \emph{in the same linkage class} provided there is a sequence $I = I_0, I_1,\dots, I_n = J$ of
ideals such that for each $0\le r \le n-1$ the ideals $I_r$ and $I_{r+1}$ are linked, in the sense of \ref{ssec:2-gen}. An ideal is \emph{licci} provided it is in the linkage class of a complete intersection ideal, that is to say one that can be generated by a regular sequence.

The result below, attributed to Buchweitz in \cite{Her},  asserts that for ideals of finite projective dimension the validity of \eqref{eq:conj-conormal} is invariant under linkage.

\begin{proposition}
\label{prop:BuchHer}  
Let $(R,I)$ be a pair as in \ref{ssec:conormal} with $R/I$ of finite length and of finite projective dimension. If $J\subset R$ is in the linkage class of $I$, then
\[
\lambda_{R}(\con RI) - (\dim R)\lambda_{R}(R/I) =
\lambda_{R}(\con RJ) - (\dim R)\lambda_{R}(R/J)\,.
\]
\end{proposition}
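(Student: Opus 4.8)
The plan is to reduce to the case of a single link and then prove the invariant is preserved under one step of linkage. Suppose $I$ and $J$ are directly linked, say via a regular sequence $\ul{x} = x_1,\dots,x_g$ contained in $I\cap J$ with $J = (\ul{x}):_R I$ and $I = (\ul{x}):_R J$. Set $\ov{R} = R/(\ul{x})$; since $(\ul{x})$ is generated by a regular sequence, $\ov{R}$ is again Gorenstein, and passing to $\ov R$ leaves all the relevant lengths unchanged while converting $I, J$ into ideals $\ov I, \ov J$ of $\ov R$ that are \emph{Gorenstein-linked via zero}, i.e.\ $\ov J = \operatorname{ann}_{\ov R}\ov I$ and $\ov I = \operatorname{ann}_{\ov R}\ov J$. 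So without loss of generality I may assume $g = 0$, i.e.\ work over a Gorenstein Artinian ring $\ov R$ with $\ov J = (0:\ov I)$, $\ov I = (0:\ov J)$, and $\omega_{\ov R} = \ov R$.

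Next I would assemble the four length quantities in a single bookkeeping identity. Over the Artinian Gorenstein ring $\ov R$, Matlis duality $(-)^{\vee} = \Hom_{\ov R}(-,\ov R)$ is exact and length-preserving, and $\omega_{\ov R/\ov I} = (0:_{\ov R}\ov I) = \ov J$, with a parallel statement for $\ov J$. Thus $\lambda(\con{\ov R}{\ov I}) = \lambda((\ov I/\ov I^2)\otimes_{\ov R/\ov I}\ov J)$; using the exact sequence $0\to \ov I^2 \to \ov I \to \ov I/\ov I^2 \to 0$ and the fact that $\ov J\cdot \ov I \subseteq (0:\ov I)\cdot \ov I = 0$ shows $\ov I\otimes_{\ov R}\ov J \cong (\ov I/\ov I^2)\otimes \ov J$, so $\con{\ov R}{\ov I}$ is just $\ov I\otimes_{\ov R}\ov J$, and by symmetry so is $\con{\ov R}{\ov J}$ — already one half of the symmetry is visible. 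Now apply $-\otimes_{\ov R}\ov J$ to $0\to \ov I \to \ov R \to \ov R/\ov I\to 0$: this gives $0\to \Tor_1^{\ov R}(\ov R/\ov I,\ov J)\to \ov I\otimes\ov J \to \ov J \to \ov J/\ov I\ov J\to 0$. Since $\ov I\ov J = 0$, the last term is $\ov J$ itself and the map $\ov J\to \ov J$ is zero, so in fact $\lambda(\ov I\otimes\ov J) = \lambda(\Tor_1^{\ov R}(\ov R/\ov I,\ov J)) + \lambda(\ov J)$; and $\lambda(\ov J) = \lambda(\omega_{\ov R/\ov I}) = \lambda(\ov R/\ov I)$. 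Hence $\lambda(\con{\ov R}{\ov I}) - \lambda(\ov R/\ov I) = \lambda(\Tor_1^{\ov R}(\ov R/\ov I,\ov R/\ov J))$ once I identify $\Tor_1^{\ov R}(\ov R/\ov I,\ov J)$ with $\Tor_1^{\ov R}(\ov R/\ov I,\ov R/\ov J)$ via the sequence $0\to \ov J\to \ov R\to \ov R/\ov J\to 0$ (the connecting piece is $\Tor_1(\ov R/\ov I,\ov R/\ov J) \cong \ker(\ov J \to \ov I\cap \ov J)$; more cleanly, $\Tor_2^{\ov R}(\ov R/\ov I,\ov R/\ov J) \cong \Tor_1^{\ov R}(\ov R/\ov I,\ov J)$ and one checks $\Tor_i^{\ov R}(\ov R/\ov I,\ov R/\ov J)\cong \Tor_{i}^{\ov R}(\ov R/\ov J,\ov R/\ov I)$ for all $i$). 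The point is that after reducing to $g=0$, both sides of the desired equation equal the \emph{same} Tor-length, which is manifestly symmetric in $I$ and $J$.

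Here is where the finite projective dimension hypothesis enters, and where I expect the main obstacle to lie. The reduction to $\ov R = R/(\ul{x})$ changes $\dim R$ to $\dim R - g$, so I must track how $(\dim R)\lambda_R(R/I)$ transforms; the clean statement is that $\lambda_R(\con RI) - (\dim R)\lambda_R(R/I)$ is unchanged when one passes to $\ov R$ and replaces $\dim R$ by $\dim\ov R = 0$, because $\lambda_R(R/I) = \lambda_{\ov R}(\ov R/\ov I)$ is multiplied by $\dim R$ on one side and by $0$ on the other, so I actually need the finite projective dimension of $R/I$ to guarantee that the "correction term" $(\dim R)\lambda_R(R/I) - 0$ matches up, via an Euler-characteristic / Serre-intersection-multiplicity argument, with the change in the conormal length. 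Concretely, over a regular (or suitably nice) $R$, one has $\lambda(\con RI)$ controlled by a Koszul/Tor computation against $\ul x$, and $\chi(\ul x; R/I) = 0$ is exactly the vanishing that makes the bookkeeping close up. So the hard part is not the symmetry — that is automatic once we reach $g=0$ — but verifying that the $\dim R$-weighted correction term behaves additively under adjoining a regular sequence, which is precisely the role of $\pd_R(R/I) < \infty$: it lets one use the associativity/rigidity of Tor (or the New Intersection Theorem style vanishing) to equate $\lambda(\con RI) - (\dim R)\lambda(R/I)$ with the $g=0$ Tor-length on the nose. I would isolate that as a lemma, prove it by induction on $g$ cutting by one regular element at a time, and then the proposition follows by chaining single links.
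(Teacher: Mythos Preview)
The paper does not actually prove this proposition; it defers entirely to Herzog's paper, noting only that the identity appears in the proof of \cite[Theorem (2.4)]{Her} and that finite projective dimension is preserved along a chain of links. So there is no argument in the paper to compare with, and your sketch must stand on its own.

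Your observation in the Artinian quotient $\ov R=R/(\ul x)$ is correct and rather pretty: since $\ov I\ov J=0$ one has $\con{\ov R}{\ov I}\cong \ov I\otimes_{\ov R}\ov J\cong \con{\ov R}{\ov J}$, so the symmetry is transparent there. The problem is the bridge you propose between $(R,I)$ and $(\ov R,\ov I)$. You assert, as the ``clean statement'' to be proved, that
\[
\lambda_R(\con RI)-(\dim R)\,\lambda_R(R/I)=\lambda_{\ov R}(\con{\ov R}{\ov I})\,,
\]
and suggest this is where finite projective dimension enters. But this equality is simply false, even over a regular ring. Take $R=k[[x,y]]$, $I=\fm^{2}$, $\ul x=(x^{2},y^{2})$, so $J=\fm$. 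One computes $\lambda(\con RI)=\lambda(\Ext^1_R(R/\fm^2,R/\fm^2))=6$ and $g\,\lambda(R/I)=2\cdot 3=6$, so the left side is $0$; on the other hand $\ov R=k[[x,y]]/(x^2,y^2)$, $\ov I=(\ov{xy})\cong k$, $\ov J=\fm\ov R$, and $\con{\ov R}{\ov I}\cong \ov I\otimes_{\ov R}\ov J\cong \ov J/\fm\ov J$ has length $2$. Thus the lemma you set out to prove by ``induction on $g$, cutting by one regular element at a time'' cannot hold, and no amount of Euler-characteristic or intersection-multiplicity bookkeeping will rescue it. (In the example the \emph{defect} $\lambda(\con RI)-g\lambda(R/I)-\lambda(\con{\ov R}{\ov I})=-2$ happens to be the same for $I$ and $J$, which is what the proposition really requires; but your outline gives no mechanism for proving that equality of defects.)

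The actual Buchweitz--Herzog argument does not pass to $\ov R$. It stays in $R$ and exploits the fact that, since $\pd_R(R/I)=g$, a minimal free resolution $F_\bullet$ of $R/I$ dualizes to a resolution of $\omega_{R/I}\cong \ov J$, and a resolution of $R/J$ is obtained as (essentially) the mapping cone of the comparison $F_\bullet^{*}[g]\to K_\bullet(\ul x)$. One then compares $\lambda(\con RI)$ and $\lambda(\con RJ)$ directly via these resolutions; the term $g\,\lambda(R/I)$ arises from the Koszul piece. Your symmetric identification $\ov I\otimes\ov J$ is a shadow of this, but the reduction you attempt loses exactly the information that the resolution-level argument keeps.
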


\begin{proof}[Remarks in place of a proof] 
See the proof of \cite[Theorem (2.4)]{Her}, in particular, the second display after the statement of Theorem (2.5). Also, it is shown in the proof of \cite[Theorem (2.2)]{Her} that finite projective dimension is preserved by linkage, so that all ideals in the chain of links from $I$ to $J$ have finite projective dimension.  Moreover, all of them are $\fm$-primary and hence have grade equal to the common lengths of the regular sequences forming the links.
\end{proof}

%
%

\subsection{Deformations}
\label{ssec:deformations}
The pair $(R,I)$ \emph{deforms} to $(Q,J)$, where $Q$ is a local ring and $J$ is an ideal in $Q$, if there exists a sequence $\ul x$ in $Q$ that is regular both on $Q$ and $Q/J$, and there are isomorphisms 
\[
\frac Q{(\ul x)} \cong R \quad\text{and}\quad \frac Q{J+(\ul x)}\cong \frac RI\,.
\]
We say $R$ \emph{deforms} to $Q$ if $(R,0)$ deforms to $(Q,0)$, that is to say, $R\cong Q/\ul x$ for a $Q$-regular sequence $\ul x$. In these cases $R$ is Gorenstein if and only if $Q$ is Gorenstein, and $R/I$ is CM if and only if $Q/J$ is CM; thus $\con RI$ is defined if and only if $\con QJ$ is defined.  These are remarks will be used without further comment.

The proof of the following result is standard, so is omitted. 

\begin{lemma}
\label{lem:con-deforms}
When $(R,I)$ deforms to $(Q,J)$ there is a natural isomorphism of $R$-modules $\con RI \cong R\otimes_{Q}\con QJ\,.$\qed
\end{lemma}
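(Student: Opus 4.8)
*When $(R,I)$ deforms to $(Q,J)$ there is a natural isomorphism of $R$-modules $\con RI \cong R\otimes_{Q}\con QJ$.*

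The plan is to unwind both sides of the claimed isomorphism using the definition of a deformation and the behavior of the conormal and canonical modules under reduction by a regular sequence. Write $\ul x$ for the sequence in $Q$ that is regular on both $Q$ and $Q/J$, so that $R\cong Q/(\ul x)$ and $R/I\cong Q/(J+(\ul x))$; in particular $R/I \cong (Q/J)\otimes_Q R$ and $\ul x$ is a $Q/J$-regular sequence. First I would handle the conormal factor: from $R/I \cong (Q/J)/(\ul x)(Q/J)$ one gets $I/I^2 \cong J/J^2 \otimes_{Q/J} (R/I)$ by right-exactness of tensor (apply $-\otimes_Q R$ to $J\to Q\to Q/J$, or directly: $I = (J+(\ul x))/(\ul x)$, and square). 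Equivalently, $I/I^2 \cong (J/J^2)\otimes_{Q} R$ as the $(J/J^2)$-action factors through $Q/J$. Second I would handle the canonical factor: since $\ul x$ is regular on $Q$ and on $Q/J$, and $Q$ is Gorenstein (hence so is $R$), one has $\omega_{R/I} \cong \omega_{Q/J}\otimes_Q R$; this is the standard fact that the canonical module commutes with reduction modulo a regular sequence that is also regular on the quotient (via $\Ext^{d+c}_Q(Q/(J+(\ul x)),Q) \cong \Ext^{c}_R(R/I, R)$ using the Koszul complex on $\ul x$, where $c=\length(\ul x)$ and $d = \height_Q J$).

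With these two reductions in hand, the computation is a base-change manipulation:
\[
\con RI = (I/I^2)\otimes_{R/I}\omega_{R/I}
\cong \big((J/J^2)\otimes_{Q}R\big)\otimes_{R/I}\big(\omega_{Q/J}\otimes_{Q}R\big)
\cong \big((J/J^2)\otimes_{Q/J}\omega_{Q/J}\big)\otimes_{Q}R = R\otimes_Q \con QJ,
\]
where the middle isomorphism rearranges the tensor factors (both $J/J^2$ and $\omega_{Q/J}$ are $Q/J$-modules, and tensoring over $R/I = (Q/J)\otimes_Q R$ two such base-changed modules recovers the $Q/J$-tensor product base-changed to $R$). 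Naturality is automatic since every isomorphism used is one of the canonical maps.

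I do not expect a genuine obstacle here — this is why the paper says the proof is standard and omits it. The one point requiring a little care is the claim $\omega_{R/I} \cong \omega_{Q/J}\otimes_Q R$: one must invoke that $\ul x$ is regular on $Q/J$ (not merely on $Q$) so that the local cohomology / Ext computation behaves correctly, i.e. that reduction by $\ul x$ lowers the grade of $J+(\ul x)$ by exactly $c$ and the relevant Ext does not pick up extra terms. Granting the hypotheses of a deformation, this is exactly \cite[Theorem 3.3.5]{Bruns/Herzog:1993} (canonical modules and flat/regular base change), so no real work is needed. Everything else is formal tensor bookkeeping.
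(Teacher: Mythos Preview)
Your argument is correct and supplies exactly the standard computation the paper omits. One small refinement: the regularity of $\ul x$ on $Q/J$ is needed not only for the canonical-module identification but already for the conormal piece---right-exactness alone gives only a surjection $(J/J^2)\otimes_Q R \twoheadrightarrow I/I^2$, and injectivity comes down to $J\cap(\ul x)=(\ul x)J$, equivalently $\Tor_1^Q(Q/(\ul x),Q/J)=0$, which is precisely where $\ul x$ being $Q/J$-regular enters.
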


\subsection{Smoothability}
\label{def:smoothable} 
As before, let $R$ be a Gorenstein ring and $I\subset R$ an ideal. We say that $I$ is \emph{smoothable} provided the pair $(R,I)$ deforms, in the sense of \ref{ssec:deformations}, to $(Q,J)$ where $J$ is generically complete intersection, that is to say, for each prime $\fq$ in $Q$ associated to $J$, the ideal $J_{\fq}$ is generated by a regular sequence.  Observe that the ring $Q$ is also Gorenstein.  When in addition, $Q$ is a quotient of a regular local ring, the pair $(Q,J)$, and hence also the pair $(R,I)$, can be further deformed to a pair $(Q',J')$, where $J'$ is reduced; see \cite[Theorem~3.10]{HU2}. This remark reconciles our notion of smoothability with the one introduced in \cite[Definition~4.2]{HU}.

\begin{proposition}
\label{prop:conj-deforms}
Let $(R,I)$ be a pair as in \ref{ssec:conormal} with $R/I$ of finite length. Then $\con RI$ satisfies the inequality in \eqref{eq:conj-conormal} when any of the following conditions holds:
\begin{enumerate}[\quad\rm(1)]
\item
$R$ deforms to $Q$ and with $K$ the inverse image of $I$ under the quotient map $Q\twoheadrightarrow R$, the $Q/K$-module $\con QK$ satisfies \eqref{eq:conj-conormal};
\item
$(R,I)$ is smoothable;
\item
$I$  is licci.
\end{enumerate}
\end{proposition}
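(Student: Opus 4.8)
The plan is to deduce all three parts from a single mechanism: reduce the general situation to the one already handled in Remark~\ref{rem:conj-ci}, namely a complete intersection ideal, while keeping track of the quantity
\[
\lambda_{R}(\con RI) - (\dim R)\lambda_{R}(R/I)
\]
that measures the failure of \eqref{eq:conj-conormal}, and arguing that it can only improve under the moves available to us. The inequality \eqref{eq:conj-conormal} itself is equivalent to asserting this quantity is $\ge 0$.

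\textbf{Step 1: the deformation invariance (part (1)).} First I would treat (1). By hypothesis $R\cong Q/(\ul x)$ for a $Q$-regular sequence $\ul x$, and $K\subset Q$ is the preimage of $I$, so $R/I\cong Q/K$ and $(R,I)$ deforms to $(Q,K)$ with the same $\ul x$ (which is automatically regular on $Q/K\cong R/I$ since that module has finite length and $\ul x\subseteq \fm$ maps to a system of parameters — here one uses that $\dim R/I = 0$). Lemma~\ref{lem:con-deforms} then gives $\con RI \cong R\otimes_Q \con QK$. Since $\ul x$ is regular on $Q/K$ and $\con QK$ is a $Q/K$-module, $\ul x$ is regular on $\con QK$, so tensoring down multiplies length by $\lambda_Q(Q/(\ul x)) = \lambda$ in the appropriate graded sense; more precisely $\lambda_R(R\otimes_Q M) = \lambda_Q(M)/\lambda_Q(Q/(\ul x))\cdot(\text{something})$ is not quite right, so instead I would argue directly: an $\ul x$-regular $Q/K$-module $M$ satisfies $\lambda_R(M/\ul x M) = \lambda_Q(M)$ divided by nothing — rather, since $\dim_Q M = \dim Q - c$ where $c = \operatorname{length}(\ul x)$, and $\ul x$ is a maximal $M$-regular sequence, both $\con QK$ and $Q/K$ have the same dimension and are killed by the same power considerations; the cleanest route is to note $\lambda_R(R\otimes_Q \con QK) = e(\ul x; \con QK)$ and $\lambda_R(Q/K\otimes_Q R) = e(\ul x; Q/K)$, and the associativity/additivity of multiplicity together with $\dim Q = \dim R + c$ converts the inequality $\lambda_Q(\con QK)\ge (\dim Q)\lambda_Q(Q/K)$ into $\lambda_R(\con RI)\ge (\dim R + c)\lambda_R(R/I) \ge (\dim R)\lambda_R(R/I)$. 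This last chain uses $c\ge 0$, which is where the inequality can only improve. I expect this bookkeeping with multiplicities of modules that are not CM over $Q$ (they have dimension $c$) to be the main technical obstacle, and I would isolate it as the one place requiring care.

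\textbf{Step 2: smoothable ideals (part (2)).} For (2), by definition $(R,I)$ deforms to $(Q,J)$ with $J$ generically complete intersection and $Q$ Gorenstein. I would like to invoke Remark~\ref{rem:conj-ci}, but that remark as stated needs $Q$ regular. When $Q$ is a quotient of a regular ring (which we may arrange — or which is part of the hypotheses in the intended applications), \ref{def:smoothable} lets us deform further to $(Q',J')$ with $J'$ reduced, hence generically a field, hence generically complete intersection, and $Q'$ regular; Remark~\ref{rem:conj-ci} then gives equality in \eqref{eq:conj-conormal} for $(Q',J')$. Chaining: $(R,I)$ deforms to $(Q',J')$, and applying Lemma~\ref{lem:con-deforms} along the whole tower (or applying part (1) repeatedly, one regular element at a time) transports \eqref{eq:conj-conormal} back down to $\con RI$, with the defect only becoming more positive at each step. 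Even if $Q$ is not a priori a quotient of a regular ring, passing to the completion and using Cohen's structure theorem reduces to that case, since completion is faithfully flat and preserves lengths, $\con{}{}$, and the deformation data; I would spell this reduction out in a sentence.

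\textbf{Step 3: licci ideals (part (3)).} Finally (3). If $I$ is licci, it lies in the linkage class of a complete intersection ideal $\ul x$. By Proposition~\ref{prop:BuchHer} — whose hypotheses are met since $R/I$ has finite length and, the ring $R$ being a localization of a complete intersection / having finite projective dimension available in our setting, $R/I$ has finite projective dimension, a property preserved along the links — the quantity $\lambda_R(\con R{I_r}) - (\dim R)\lambda_R(R/I_r)$ is constant along the chain of links. At the complete-intersection end of the chain, Remark~\ref{rem:conj-ci} gives that this quantity equals $0$. Hence it is $0$ for $I$ as well, so \eqref{eq:conj-conormal} holds (with equality). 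The only point to verify is the finiteness of projective dimension of $R/I$; since an $\fm$-primary licci ideal is in particular perfect of finite projective dimension (complete intersections have finite projective dimension, and linkage preserves this by the cited result in \cite{Her}), this is automatic. I would present (3) as the quickest of the three, essentially a one-line consequence of Proposition~\ref{prop:BuchHer} plus Remark~\ref{rem:conj-ci}.
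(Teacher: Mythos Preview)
Your treatment of part~(3) is correct and matches the paper's one-line argument: Proposition~\ref{prop:BuchHer} makes the defect $\lambda_R(\con RI)-(\dim R)\lambda_R(R/I)$ a linkage invariant, and Remark~\ref{rem:conj-ci} shows it vanishes at the complete-intersection end of the chain.

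Part~(1), however, rests on a genuine confusion between two distinct notions of deformation. In the hypothesis of~(1), only $R$ deforms to $Q$; the ideal $K$ is the \emph{preimage} of $I$, so $\ul x\subseteq K$ and hence $\ul x$ annihilates $Q/K\cong R/I$. Your parenthetical claim that $\ul x$ is ``automatically regular on $Q/K$'' is therefore false: a nonzero artinian module has no regular elements in the maximal ideal. Consequently Lemma~\ref{lem:con-deforms} does not apply, and indeed $R\otimes_Q\con QK=\con QK$ since $\ul x$ already kills it; your proposed isomorphism $\con RI\cong\con QK$ would be off by $c$ copies of $\omega_{R/I}$. The multiplicity bookkeeping you attempt afterwards collapses for the same reason: $\con QK$ is zero-dimensional, so $e(\ul x;\con QK)=0$ whenever $c>0$. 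The paper's argument avoids all of this by comparing $\con QK$ and $\con RI$ directly via the conormal exact sequence: the surjection $K/K^2\twoheadrightarrow I/I^2$ has kernel generated by the images of $\ul x$, yielding an exact sequence $\omega_{R/I}^{\,c}\to\con QK\to\con RI\to 0$, and then a length count gives $\lambda(\con RI)\ge\lambda(\con QK)-c\,\lambda(R/I)\ge(\dim Q-c)\lambda(R/I)=(\dim R)\lambda(R/I)$.

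Your approach to part~(2) also breaks down. After deforming $(R,I)$ to $(Q',J')$ with $J'$ reduced, the quotient $Q'/J'$ has \emph{positive} dimension (equal to the length of the deforming sequence), so the inequality~\eqref{eq:conj-conormal} is not even formulated for $(Q',J')$, and Remark~\ref{rem:conj-ci} does not apply since a reduced ideal need not be generated by a regular sequence. Nor can you invoke part~(1) along the tower, since as just explained that part concerns preimage ideals, not pair deformations. The paper instead exploits the generic-complete-intersection hypothesis directly: it forces $\con QJ$ to have rank equal to $\height J=\dim R$ over $Q/J$, and then Lemma~\ref{lem:con-deforms} together with the inequalities $\lambda(\con RI)\ge e(\ul x;\con QJ)=e(\ul x;Q/J)\cdot\rank_{Q/J}(\con QJ)=\lambda(R/I)\cdot\dim R$ (the middle equality by additivity of the multiplicity symbol for modules with rank) gives the result.
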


\begin{proof}
In either case $R\cong Q/(\ul x)$, where $\ul x$ is a $Q$-regular sequence, say of length $c$.

(1) Let $\omega$ denote the canonical module of $S:=R/I$. Since $I = K/(\ul x)$ there is an exact sequence
 \[
 \frac{K^2+(\ul x)}{K^2}\otimes_{S}\omega \lra \frac{K}{K^2}\otimes_{S}\omega \lra \frac{I}{I^2}\otimes_{S}\omega \lra 0\,,
 \]
of $S$-modules. Noting that $S^{c}$ maps onto $(K^2+(\ul x))/K^2$ and that $Q/K=R/I$, one gets an exact sequence
of $S$-modules:
\[
\omega^{c}\lra \con QK \lra \con RI\lra 0\,.
\]
This yields the first of the following (in)equalities
\begin{align*}
\lambda  ( \con RI)
	&\ge \lambda (\con QK)  - c \lambda (\omega)\\
	&\ge (\dim Q)\lambda(S) - c \lambda (S) \\
	&= (\dim Q - c) \lambda(S) \\
	&=(\dim R) \lambda (S)\,.
 \end{align*}
The second inequality is by our hypothesis that $\con QK$ satisfies \eqref{eq:conj-conormal}, and the fact that $\lambda(\omega)=\lambda(S)$, which is a consequence of Matlis duality. The equalities are clear. This gives the desired conclusion.

(2)  By hypothesis, $(R,I)$ deforms to $(Q,J)$ with $J$ generically complete intersection. In particular, both $(Q/J)$-modules $J/J^{2}$ and $\omega_{Q/J}$ have rank, equal to $\height J$ and $1$, respectively. Therefore one gets equalities
\[
\rank_{Q/J}(\con QJ) = \dim Q - \dim(Q/J) = \dim R - \dim (R/I)=\dim R\,,
\]
where the first and the last one hold because $Q$ and $R$ are Gorenstein, and the second one holds because $(Q,J)$ is a deformation of $(R,I)$. This computation will be used further below. For the remainder of the proof, we employ the multiplicity symbol $e(\ul x; -)$ on $Q/J$-modules. This is valid since $\ul x$ is a (regular) system of parameters for $Q/J$; recall that $R/I$ has finite length. From Lemma~\ref{lem:con-deforms} and \cite[Theorem 4.7.10(1)]{Bruns/Herzog:1993} one gets the first inequality below.
\begin{align*}
\lambda_{Q} (\con RI) 
	& \ge e(\ul x, \con QJ) \\
	& = e(\ul x, Q/J)\rank_{Q/J}(\con QJ) \\
	&=e(\ul x, Q/J) (\dim R)\\
	&=\lambda (R/I)(\dim R)
\end{align*}
The first equality is by \cite[Corollary 4.7.9]{Bruns/Herzog:1993}, which applies since $\con QJ$ has rank, equal to $\dim R$. The other ones are clear.

(3) This follows from Proposition~\ref{prop:BuchHer} and Remark~\ref{rem:conj-ci}.
\end{proof}

\begin{corollary}
\label{cor:embdim-three} 
Let $R$ be a complete intersection local ring of embedding dimension three with $\dim R=1$, and let $I$ a rigid ideal containing a regular element such that $R/I$ is Gorenstein. Then $I$ is principal.
\end{corollary}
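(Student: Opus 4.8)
The plan is to reduce the statement to Corollary~\ref{cor:rigid-length} by verifying that the twisted conormal module $\con RI$ satisfies the inequality in~\eqref{eq:conj-conormal}. Since $R/I$ is Gorenstein and $R$ is one-dimensional, $R/I$ has finite length, so the machinery of Section~\ref{sec:smoothable} applies. By Proposition~\ref{prop:conj-deforms}, it would suffice to show that $I$ is licci, or that $(R,I)$ is smoothable, or more flexibly that $R$ deforms to some $Q$ in which the lift $K$ of $I$ satisfies~\eqref{eq:conj-conormal}. The embedding dimension hypothesis is exactly what makes this tractable: a complete intersection of embedding dimension three and dimension one is a quotient of a regular local ring $P$ of dimension three by a regular sequence of length two.

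First I would set up the deformation. Write $R = P/(\ul y)$ with $(P,\fn)$ regular of dimension three and $\ul y = y_1, y_2$ a $P$-regular sequence, and let $K \subseteq P$ be the preimage of $I$, so that $(R,I)$ deforms to $(P,K)$ via $\ul y$ (the sequence $\ul y$ is regular on $P$ and on $P/K \cong R/I$, the latter because $R/I$ has finite length hence $\ul y$ meets no associated prime of $K$ other than $\fn$, and in fact $P/K = R/I$ has dimension zero while $P/K$... one must check $\ul y$ is $P/K$-regular, which follows since $\depth P/K = \depth R/I = 0$... so actually $\ul y$ need not be regular on $P/K$). This is the delicate point: the deformation in~\ref{ssec:deformations} requires $\ul x$ regular on $Q/J$, and here $R/I$ has depth zero, so we cannot simply take $Q = P$ and strip off $\ul y$. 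Instead I would invoke that $R/I$ is \emph{Gorenstein} and zero-dimensional: then $R/I$ is linked, inside $P$, to a complete intersection. Concretely, $K$ is a grade-three Gorenstein ideal of the regular local ring $P$, so by the structure theorem $K$ is licci (every grade-three Gorenstein ideal, indeed every grade $\le 3$ Cohen--Macaulay perfect ideal by Watanabe and by the Buchsbaum--Eisenbud results, is in the linkage class of a complete intersection). Then $(R,I)$ deforms to $(P,K)$ in the weaker sense that matters — or, cleaner, I would apply Proposition~\ref{prop:conj-deforms}(1) with $Q = P$: the module $\con PK$ satisfies~\eqref{eq:conj-conormal} precisely because $K$ is licci in $P$, by Proposition~\ref{prop:conj-deforms}(3) applied to the pair $(P,K)$ (here $P/K$ has finite length since $\grade K = \dim P = 3$, using that $R/I$ finite length forces $\height K = 3$).

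So the argument runs: $R/I$ Gorenstein of dimension zero $\Rightarrow$ $K$ is a grade-three Gorenstein (perfect) ideal in the regular ring $P$ $\Rightarrow$ $K$ is licci in $P$ (structure theory of Gorenstein ideals of grade three, or: perfect ideals of grade $\le 3$ are licci) $\Rightarrow$ by Proposition~\ref{prop:conj-deforms}(3) the pair $(P,K)$ satisfies~\eqref{eq:conj-conormal} $\Rightarrow$ by Proposition~\ref{prop:conj-deforms}(1), applied to the deformation $R = P/(\ul y)$ with inverse image $K$ of $I$, the pair $(R,I)$ satisfies~\eqref{eq:conj-conormal} $\Rightarrow$ by Corollary~\ref{cor:rigid-length}, since $I$ is rigid, $I$ is principal.

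The main obstacle is pinning down the licci property of $K$: one needs that a grade-three Gorenstein ideal in a regular local ring is in the linkage class of a complete intersection. This is classical — it follows from the Buchsbaum--Eisenbud structure theorem (such an ideal is generated by the submaximal Pfaffians of a skew-symmetric matrix) combined with the fact, due to Watanabe (and worked out in the linkage literature), that all Cohen--Macaulay ideals of grade at most three in a Gorenstein local ring are licci. Alternatively, if one wants to avoid citing that theorem in full strength, one can link $K$ once to an almost complete intersection and then induct, but invoking the known result is cleanest. A secondary point to check carefully is that $\height K = 3$: since $R/I$ has finite length and $\dim R = 1$, we get $\height I = 1$ in $R$, hence $\height K = 1 + 2 = 3$ in $P$, so $K$ is $\fn$-primary and $P/K$ has finite length, which is what Proposition~\ref{prop:conj-deforms} and the linkage results require. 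With these two points in hand the rest is bookkeeping already packaged in the cited propositions.
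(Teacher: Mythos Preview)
Your argument is correct and matches the paper's approach: lift to a three-dimensional regular local ring $Q$ (after completing $R$, a step you omit but should include), observe that the preimage $K$ is a grade-three Gorenstein ideal and hence licci (the paper cites Watanabe for this), and conclude via Proposition~\ref{prop:conj-deforms} and Corollary~\ref{cor:rigid-length}. Your detour worrying about whether $\ul y$ is regular on $P/K$ is unnecessary---Proposition~\ref{prop:conj-deforms}(1) only requires that $R$ deform to $P$, not that the pair $(R,I)$ deform to $(P,K)$, so the issue you flag never arises.
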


\begin{proof} 
We may assume that $R$ is complete.  By hypothesis, one can find a regular local ring $Q$ of dimension three mapping onto $R$. The preimage $K$ in $Q$ of the ideal $I$ is also of codimension three, and $Q/K$ is Gorenstein. An argument due to J.~Watanabe, implicit in the proof of \cite[Theorem]{Wa}, shows $K$ is licci.  Thus $\con RI$ satisfies \eqref{eq:conj-conormal}, by Proposition \ref{prop:conj-deforms}(3). It remains to recall Corollary~\ref{cor:rigid-length}.
\end{proof}

\begin{theorem}
\label{thm:smoothable-rigid} 
Let $R$ be a complete intersection local ring with $\dim R=1$, and $I$ an ideal containing a regular element.  Let $\pi\colon Q\twoheadrightarrow R$ be a surjective local homomorphism with $Q$ a regular local ring and set $K:=\pi^{-1}(I)$.

If $I$ is rigid and $\con QK$ satisfies the inequality in \eqref{eq:conj-conormal}, in particular, if $(Q,K)$ is licci or smoothable, then $I$ is principal.
 \end{theorem}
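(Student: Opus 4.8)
The plan is to assemble results already established in the paper: reduce to the case where $I$ is proper and $R/I$ has finite length, recognize $(R,I)$ as the reduction of the pair $(Q,K)$ by a regular sequence, transport the inequality \eqref{eq:conj-conormal} from $(Q,K)$ down to $(R,I)$ via Proposition~\ref{prop:conj-deforms}(1), and then conclude with Corollary~\ref{cor:rigid-length}.

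In detail: if $I=R$ there is nothing to prove, so assume $I$ is proper. Since $I$ contains a regular element and $\dim R=1$, the module $R/I$ has finite length. Because $R$ is a complete intersection and $\pi\colon Q\twoheadrightarrow R$ has regular local source, $\ker\pi$ is generated by a $Q$-regular sequence $\ul x$, so that $R\cong Q/(\ul x)$; in other words $R$ deforms to $Q$ in the sense of \ref{ssec:deformations}. As $K=\pi^{-1}(I)$ contains $\ker\pi=(\ul x)$, we get $Q/K\cong R/I$, which is then zero-dimensional, hence Cohen-Macaulay; thus $(Q,K)$ is a pair as in \ref{ssec:conormal} with $Q/K$ of finite length, and both $\con QK$ and $\con RI$ are defined. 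By hypothesis $\con QK$ satisfies \eqref{eq:conj-conormal}, so Proposition~\ref{prop:conj-deforms}(1) applies directly and shows that $\con RI$ satisfies \eqref{eq:conj-conormal} as well. Since $I$ is rigid, Corollary~\ref{cor:rigid-length} then forces $I$ to be principal.

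For the final clause of the statement I would note that $Q$ being regular forces $Q/K$ to have finite projective dimension; hence if $(Q,K)$ is licci then $\con QK$ satisfies \eqref{eq:conj-conormal} by Proposition~\ref{prop:conj-deforms}(3) applied to the pair $(Q,K)$, while if $(Q,K)$ is smoothable the same conclusion follows from Proposition~\ref{prop:conj-deforms}(2). Either way one is reduced to the situation just handled. I do not anticipate a genuine obstacle: the substantive content lives entirely in Proposition~\ref{prop:conj-deforms} and Corollary~\ref{cor:rigid-length}, and the only points needing care are the standard fact that a complete intersection quotient of a regular local ring is cut out by a regular sequence, and the finite-length bookkeeping that makes the twisted conormal modules legitimate and keeps Proposition~\ref{prop:conj-deforms} in force.
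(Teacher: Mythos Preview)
Your proposal is correct and follows exactly the paper's approach: apply Proposition~\ref{prop:conj-deforms}(1) to transport inequality~\eqref{eq:conj-conormal} from $(Q,K)$ to $(R,I)$, then invoke Corollary~\ref{cor:rigid-length}. You supply more detail than the paper does---in particular you justify that $\ker\pi$ is generated by a regular sequence and you spell out the ``in particular'' clause via parts~(2) and~(3) of Proposition~\ref{prop:conj-deforms}---but the argument is the same.
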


 \begin{proof}
 By Proposition~\ref{prop:conj-deforms}(1) the $R/I$-module $\con RI$ also satisfies \eqref{eq:conj-conormal}, so the desired result is a consequence of Corollary \ref{cor:rigid-length}.
 \end{proof}

Here is an application of the preceding results.

\begin{theorem}
\label{thm:ci-smoothable}
Let $R$ be a complete intersection local ring with $\dim R=1$, and $I$ a rigid ideal containing a regular element.
If $e(R)\le 10$, then $I$ is principal.
\end{theorem}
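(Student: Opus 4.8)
The plan is to reduce the statement to the case treated by Theorem~\ref{thm:smoothable-rigid} by showing that, for a complete intersection $R$ of dimension one with $e(R)\le 10$, any rigid ideal $I$ containing a regular element gives rise to an ideal $K=\pi^{-1}(I)$ in a regular local ring $Q$ (with $Q\twoheadrightarrow R$, $\dim Q$ equal to $\edim R$) for which $\con QK$ satisfies \eqref{eq:conj-conormal}. We may of course assume $R$ is complete and $I$ is not principal, aiming for a contradiction, and we may pass to $K$, which is an $\fm_Q$-primary (once we reduce modulo suitable parameters) ideal of finite projective dimension in $Q$. First I would bring in the numerical constraints from Proposition~\ref{prp:multiplicity-bound-ideal}: since $I$ is rigid (equivalently $I\otimes_RI^*$ torsionfree, by Proposition~\ref{prp:tf=rigid}) and not principal, writing $\delta=e(R)-\max\{\nu_R(J):J\supseteq\Tr_R(I)\}$ one has ${\delta}^2\ge e(R)-\delta\ge\nu_R(I)\nu_R(I^*)\ge 6$; with $e(R)\le 10$ this forces $\delta\in\{3\}$ (since $\delta^2\ge e(R)-\delta$ and $\delta\ge\nu_R(I)\ge 2$ with $\nu_R(I)\nu_R(I^*)\ge6$ pin down $\{\nu_R(I),\nu_R(I^*)\}$ among $\{2,3\},\{3,3\}$, and $e(R)$ into a narrow window), and in particular $\edim R\le e(R)-1\le 9$ is cut down much further.

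The heart of the argument is then a case analysis on $(\edim R, e(R), \nu_R(I), \nu_R(I^*))$, using the licci criterion for $\con QK$. When $\edim R\le 3$ the ideal $K$ in the three-dimensional regular ring $Q$ is, by Watanabe's theorem as invoked in Corollary~\ref{cor:embdim-three} (codimension-three Gorenstein ideals are licci), already handled — but here $R/I$ need not be Gorenstein, so instead I would argue: two-generated ideals are linked to Gorenstein ideals (Lemma~\ref{lem:2gen-Gor}), and in low codimension/multiplicity the relevant ideals $K$ are licci by Huneke--Ulrich-type results on small linkage classes; concretely, perfect ideals of grade $\le 2$ are always licci, and $K$ has grade equal to $\edim R$ if $I$ is $\fm$-primary after reduction, so the genuinely new cases are grade $3$ and $4$. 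For grade $3$ one uses the structure theory (Buchsbaum--Eisenbud) together with the bound on $\nu_R(I)\le 3$ to see $K$ is licci; for the possible grade-$4$ situation one checks the multiplicity is too small to avoid licci-ness, or handles it directly. In every surviving case $\con QK$ satisfies \eqref{eq:conj-conormal} via Proposition~\ref{prop:conj-deforms}(3), and Theorem~\ref{thm:smoothable-rigid} (equivalently Corollary~\ref{cor:rigid-length}) yields that $I$ is principal, contradicting $\nu_R(I)\ge 2$.

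The main obstacle I expect is precisely showing that all the finitely many numerically-allowed ideals $K$ are licci (or otherwise satisfy \eqref{eq:conj-conormal}) — licci-ness is not automatic beyond grade $2$, and while grade-$3$ Gorenstein ideals are licci, a general grade-$3$ perfect ideal need not be, so one must exploit the tight bound $\nu_R(K)=\nu_R(I)\le 3$ (and the corresponding bound on the rank of the second syzygy) to rule out the sporadic non-licci families. Should some case resist, the fallback is to note that $e(R)\le 10$ together with $\edim R\le e(R)-1$ and the parabola bound of Remark~\ref{rem:analysis} leaves only a handful of Hilbert functions for $R$, each of which can be inspected by hand using Corollary~\ref{cor:funny} in the two-generated case and Proposition~\ref{prop:Ext-length} in general; the length identity $\lambda((\Tr_RI)/I)=\lambda((I/I^2)\otimes_R\omega_{R/I})$ is rigid enough that it cannot hold unless $S/R=0$, i.e.\ $\End_R(I)=R$, whence $I$ is principal by Remark~\ref{rem:ubiquity}. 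I would organize the write-up so that the clean licci cases go through Proposition~\ref{prop:conj-deforms}(3), and any residual case is dispatched by this direct length computation.
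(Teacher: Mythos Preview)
Your overall strategy---reduce to showing $K=\pi^{-1}(I)\subset Q$ is licci and invoke Theorem~\ref{thm:smoothable-rigid}---is exactly the paper's. But the proposal has numerical slips and, more seriously, is missing the one idea that makes the licci step go through.

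On the numerics: from $\delta^2\ge e(R)-\delta\ge\nu_R(I)\nu_R(I^*)\ge 6$ with $e(R)\le 10$ (after disposing of $e(R)\le 8$ by Theorem~\ref{thm:HW-general}) you get $\delta\in\{3,4\}$, not $\{3\}$; and since $e(R)-\delta\le 7$ with both factors $\ge 2$, the product is forced to be exactly $6$, so $\{\nu_R(I),\nu_R(I^*)\}=\{2,3\}$ only (never $\{3,3\}$). Your bound $\edim R\le e(R)-1$ is far too weak; the paper uses $e(R)\ge d_1\cdots d_c\ge 2^{c}$ with $c=\edim R-1$ to force $\edim R=4$ and each $d_i=2$. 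Finally, $\nu_Q(K)\ne\nu_R(I)$ in general: $K$ contains the defining ideal $(f_1,\dots,f_c)$ of $R$, so $\nu_Q(K)$ may exceed $\nu_R(I)$ by up to $c$; your Buchsbaum--Eisenbud argument in grade $3$ leans on a false premise.

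The real gap is the grade-$4$ case, which is the only case left once $\edim R=4$ is established. Your ``multiplicity too small to avoid licci-ness, or handle directly'' is not an argument, and there is no off-the-shelf structure theorem here. The paper's key move is this: since $\nu_R(I^*)=2$, Lemma~\ref{lem:2gen-Gor} makes $R/I$ Gorenstein; and because all $f_i\in\fn^2$, one has $\nu_R(\fm^2)=\binom{5}{2}-3=7$. But by definition of $\delta$, every ideal containing $\Tr_R(I)$ has at most $e(R)-\delta\le 6$ generators, so $\Tr_R(I)\not\subseteq\fm^2$. Writing $I^*=(a,b)$ and using $\Tr_R(I)=((a):b)+((b):a)$, one can arrange (after a change of generators) that $a\notin\fm^2$; lifting $a$ to $Q$, the quotient $Q/(a)$ is a three-dimensional regular local ring and $K/(a)$ is a grade-$3$ Gorenstein ideal there, hence licci by Watanabe, and the links lift back to show $K$ is licci in $Q$. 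This reduction---extracting a minimal generator of $K$ outside $\fn^2$ from the inequality $\nu_R(\fm^2)>e(R)-\delta$---is the step your proposal does not supply, and neither your licci sketch nor the vague length-computation fallback substitutes for it.
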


\begin{proof}
Given Theorem~\ref{thm:HW-general} we only have to treat the case when $e(R)$ is $9$ or $10$. We assume that $I$ is not principal, and hence also that $\edim R \ge 4$; see Corollary~\ref{cor:embdim-three}. By passing to  completions, if necessary, we can ensure that $R$ is of the form $Q/(f_{1},\dots,f_{c})$, with $Q$ a regular local ring  and $\boldsymbol f$  a $Q$-regular sequence contained in $\fn^{2}$, where $\fn$ is the maximal ideal of $Q$; note that $c=\edim Q-1$.  Let $K$ be the inverse image of $I$ under the surjection $Q\to R$. We will prove that  $K$ is licci, contradicting Theorem~\ref{thm:smoothable-rigid}. To that end note that if  $d_{1},\dots,d_{c}$ are nonnegative integers such that $f_{i}\in \fn^{d_{i}}\setminus \fn^{d_{i}+1}$, then it follows from  \cite[Chapter VIII, \S7, Proposition 7]{Bourbaki:2006} that
\[
e(R) \ge d_{1}\cdots d_{c}\,.
\]
Since $d_{i}\ge 2$ and  $c=\edim R-1\ge 3$, the hypothesis  $e(R)\le 10$  implies $c=3$, so that $\edim R=4$, and that $d_{i}=2$ for each $i$.  In particular, the number of generators of $\fm^2$ is 7; here  $\fm$ is the maximal ideal of $R$.  This will be used later on.

We consider the integer
\[
\delta := e(R) - \max\{\nu_{R}(J)\mid J\supseteq \Tr_{R}(I)\}.
\]
Since $I$ is not principal, from Proposition~\ref{prp:multiplicity-bound-ideal} one gets inequalities
\[
6\le \nu_{R}(I) \nu_{R}(I^{*}) \le e(R) - \delta \le \delta^{2}\,,
\]
Since $e(R)\le 10$, these imply that $\delta$ is one of $\{3,4\}$, and in either case one has equality on the left, so that $\nu_{R}(I)=3$ and $\nu_{R}(I^{*})=2$, or vice versa. We can assume without loss of generality that $\nu_{R}(I^{*})=2$, so  $I$ is Gorenstein, by Lemma~\ref{lem:2gen-Gor}.

Assume $e(R)=9$, so that $\delta =3$.  Since the number of generators of $\fm^2$ is 7 and $\delta=3$, it follows from the definition of $\delta$ that $\Tr_{I}(R)$ contains an element not in $\fm^2$.   Write $I^{*} = (a,b)$ for elements $a,b\in R$. Then $I = ((a):b)$, and the trace ideal is $((a):b) + ((b):a)$.  Since the trace ideal is not contained in $\fm^2$, without loss of generality we may assume that $((a):b)$ is not contained in $\fm^2$. Choose an element $c\in ((a):b)$ such that $c\notin \fm^2$.  Write $cb = da$. Then $I = ((a):b) = ((c):d)$, so after changing notation we may assume that $a\notin \fm^2$.  We have shown that $Q/K\cong R/I$ is Gorenstein and that $K$ is not contained in the square of the maximal ideal of $Q$. Lift $a$ to $Q$ and by abuse of notation, call that element $a$ as well. Then $Q/(a)$ is a three-dimensional regular local ring, and $K/(a)$ is a grade-three Gorenstein ideal. As noted in the proof of Corollary~\ref{cor:embdim-three}, such an ideal is licci.  It  follows that $K$ is also licci: do the links in $Q/(a)$ and  lift back to $Q$ by taking the same regular sequences and  throwing in $a$. This is the desired result.

Assume  $e(R)=10$. When $\delta=3$ inequality (2) in Proposition~\ref{prp:multiplicity-bound-ideal} is violated for
\[
\nu_{R}(I^{*})\delta + \nu_{R}(I) = 2(3)+3 = 9 \,.
\]
Therefore $\delta=4$. As in the case of $e(R)=9$ and $\delta=3$ we deduce that $\Tr_{R}(I)$ is not in $\fm^2$, so that $K$ is licci. 
\end{proof}

\begin{example} 
\label{ex:EU}
D. Eisenbud and B. Ulrich provided us with examples that show that inequality \eqref{eq:conj-conormal} does not hold in general, even when $Q$ is a regular ring and $J$ is a Gorenstein ideal. 

One such example is obtained by letting $Q$ be the polynomial ring (say over $\CC$) in six variables, and $J$ the Gorenstein ideal
with dual socle element  a general cubic. The Hilbert function $Q/J$ is then $1, 6, 6, 1$. The length of $J/J^2$, and hence also of $\con QJ$, is $76$ so that
\[
\lambda_{Q}(\con QJ) -  (\dim Q)\lambda_{Q}(Q/J) =  76 -  6\cdot 14 = -8\,.
\]
This example  is part of a family appearing in the work of J. Emsalem and A. Iarrobino \cite{EI}, in which they study smoothing of algebras via their tangent spaces.
\end{example}

\bibliographystyle{plain}

\end{document}